\numberwithin{equation}{section}
\theoremstyle{plain}
\newtheorem{thm}{Theorem}[section]
\newtheorem{lem}[thm]{Lemma}
\newtheorem{prop}[thm]{Proposition}
\newtheorem{assump}[thm]{Assumption}
\theoremstyle{definition}
\newtheorem{defn}[thm]{Definition}
\theoremstyle{remark}
\newtheorem{rem}[thm]{Remark}
\newtheorem{ex}[thm]{Example}
\newcommand{\R}{\ensuremath{\mathbb{R}}}
\newcommand{\Z}{\ensuremath{\mathbb{Z}}}
\newcommand{\cP}{\ensuremath{\mathcal{P}}}
\newcommand{\ms}{\ensuremath{\mathcal{M}}}  
\newcommand{\crit}{\ensuremath{\mathrm{Crit}}} 
\newcommand{\ind}{\ensuremath{\mathrm{ind}}} 
\newcommand{\find}{\ensuremath{\mathrm{Ind}}} 
\newcommand{\loc}{\ensuremath{\mathrm{loc}}} 
\newcommand{\coker}{\ensuremath{\mathrm{coker}}} 
\newcommand{\hess}{\ensuremath{\mathrm{Hess}}} 
\newcommand{\supp}{\ensuremath{\mathrm{supp}}}
\newcommand{\p}{\ensuremath{\partial}} 
\newcommand{\cH}{\ensuremath{\mathcal{H}}}
\newcommand{\cB}{\ensuremath{\mathcal{B}}}
\newcommand{\cI}{\ensuremath{\mathcal{I}}}
\newcommand{\cR}{\ensuremath{\mathcal{R}}}
\newcommand{\tpsi}{\ensuremath{\psi^\tau}}
\newcommand{\Ruz}{\ensuremath{u_0^{R_- + R_0^-}}}
\newcommand{\Rup}{\ensuremath{u_+^{R_- + R_0 + R_+}}}
\newcommand{\tu}{\ensuremath{u^{\tau}}}
\newcommand{\ob}{\ensuremath{\mathcal{O}}} 
\newcommand{\os}{\ensuremath{\mathfrak{s}}} 
\DeclareMathOperator{\im}{Im}
\newcommand{\ol}{\overline}
\begin{document}
\title[Obstruction Bundle Gluing on Morse Flowlines]{An Invitation to Obstruction Bundle Gluing through Morse Flowlines}

\begin{abstract}
We adapt ``Obstruction Bundle Gluing (OBG)" techniques from \cite{Hutchings-Taubes_2007} and \cite{Hutchings-Taubes_2009} to Morse theory.
We consider Morse function-metric pairs with gradient flowlines that have nontrivial yet well-controlled cokernels (i.e., the gradient flowlines are not transversely cut out). We investigate (i) whether these nontransverse gradient flowlines can be glued to other gradient flowlines and (ii) the bifurcation of gradient flowlines after we perturb the metric to be Morse-Smale.
For the former, we show that certain three-component broken flowlines with total Fredholm index $2$ can be glued to a one-parameter family of flowlines of the given metric if and only if an explicit (essentially combinatorial, and straightforward to verify) criterion is satisfied.  For the latter, we provide a similar combinatorial criterion of when certain $2$-level broken Morse flowlines of total Fredholm index $1$ glue to index $1$ gradient flowlines after perturbing the metric.
Our primary example is the ``upright torus," which has a flowline between the two index-$1$ critical points.
\end{abstract}

\date{}


\author[Ipsita Datta]{Ipsita Datta $^{1}$}
\address{$^1$Department of Mathematics ETH Z\"{u}rich, Switzerland;\linebreak\tiny{ipsita.datta@math.ethz.ch}}
\author[Yuan Yao]{Yuan Yao $^{2}$}
\address{$^2$The University of Texas at Austin Department of Mathematics, USA;\linebreak \tiny{yuan.yao.yy.1995@gmail.com}}

\keywords{Obstruction Bundle Gluing, Morse Homology, Gluing} \thanks{\emph{Subjclass[2025]}:	57R58, 	53D40.}

\maketitle

\tableofcontents

\section{Introduction}
Morse homology is now a widely studied homology theory. Apart from being very visual, it provides a sound stage for testing new techniques for studying Floer theories and moduli spaces of $J$-holomorphic curves. This paper's primary goal is to adapt the ``Obstruction Bundle Gluing (OBG)'' techniques from \cite{Hutchings-Taubes_2009} and \cite{Hutchings-Taubes_2007}, which were developed in the context of Embedded Contact Homology (ECH), to the setting of Morse theory. The OBG techniques were introduced into symplectic geometry by Hutchings and Taubes to show that the ECH differential squares to zero, even though not all the holomorphic curves that appear in the boundary of the $1$-dimensional moduli spaces under consideration are ``transversely cut-out". In particular, they glue holomorphic buildings to holomorphic curves even when some components of the building are not transversely cut out. Our primary motivation is to provide an expository account of the OBG techniques in the Morse setting in detail, without the complications that arise in the $J$-holomorphic curve context, with the hope that the symplectic geometry community will widely use it.\footnote{There has been some earlier discussion of how to use obstruction bundle techniques in the Morse setting, see the blogpost \cite{Hutchings_blog} outlining how to use this technique to study bifurcations of gradient flow trajectories for circle-valued Morse functions. The blog post only examines the linearized obstruction section and does not provide the analytic details for analyzing the full obstruction section. We can view the present article as fleshing out how one would need to fill in those analytical details.}

\begin{figure}[h]
        \centering
        \includegraphics[scale=0.5]{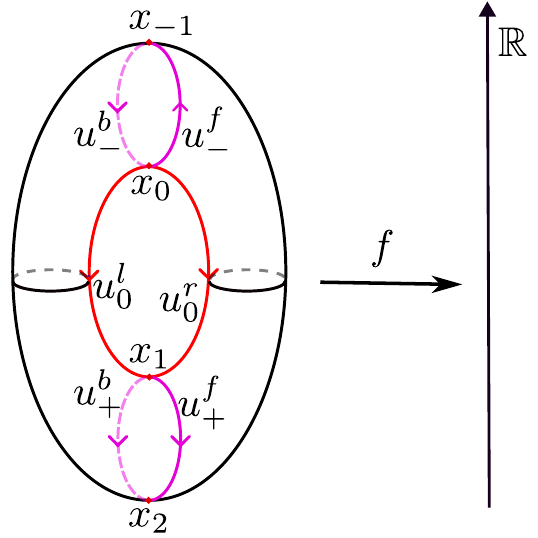}
        \caption{Height function on the upright torus and flowlines for the metric induced by restricting the standard Euclidean metric on $\R^3$.}
        \label{fig: height function on the torus}
    \end{figure}

Examples of non-transversely cut out flowlines in Morse theory are, in fact, not hard to find. For Morse functions on surfaces, nontrivial cokernels naturally arise when the Morse function is invariant under a $\Z_2$-symmetry. Our primary example of the ``upright torus" exploits this symmetry. Consider the height function $f$ on the torus embedded in $\R^3$ such that the embedding is symmetric with respect to the $\Z_2$ action sending $x \mapsto -x$. Endow the torus with the metric $g$ coming from restricting the standard metric on $\R^3$. The height function is a Morse function, but the pair $(f,g)$ is not Morse-Smale - there are flowlines between the two index $1$ critical points.  

Most of this paper is devoted to what we informally call ``$0$-gluing", which refers to the gluing of total Fredholm index $2$ broken flowlines with three components, one of which is not transversely cut out and has Fredholm index $0$, without perturbation of the metric. In particular, consider a pair $(f,g)$ of a Morse function and a metric\footnote{ To keep the analysis to manageable levels, we impose some simplifying assumptions on the form of the metric near the critical points, in Assumptions \ref{assumption on setup}. We explain how we expect to get rid of the simplifying assumptions in Remark \ref{rem:nonlinear_metric}.} on a closed surface with a broken flowline, $(u_-, u_0, u_+)$ with $3$ continuous pieces, $u_-, u_0, u_+$, of indices $1, 0, 1$, resp. The broken flowline $(u_-, u_0, u_+)$ has a unique family $\{u_t\}_{t \in (0, \epsilon)}$ of flowlines with respect to the same metric $g$,  converging to it when the components of the broken flowline satisfy the required asymptotic conditions. These asymptotic conditions are very easy to read off. If these asymptotic conditions are not satisfied, there is no such limiting family, that is, there is no ``$0$-gluing". 

The heart of the argument is the construction of the ``obstruction bundle" and an ``obstruction section" $\os$ such that there is a glued flowline every time the obstruction section vanishes. We study the zero set of the obstruction section by studying the zero set of an approximation called the ``linearized" obstruction section $\os_0$. We show that the linearized section is ``$C^1$-close'' to the obstruction section; hence, understanding the zero set of $\os_0$ is sufficient to understand the zero set of the original section $\os$. 

The linearized section is easy to understand as it is a linear combination of exponential functions. 
We show (under assumed conditions) that the zero set of the obstruction section $\os^{-1}(0)$, which describes the moduli space of flowlines, is well-behaved (i.e. a manifold) even though we can have non-transversely cut-out flowlines. For each $R_0 \gg 0$, the linearized obstruction section has a graph like one of the graphs in Figure~\ref{fig: linearized section}; therefore, it has either a unique solution (corresponding to unique gluing) or no solution (no possible gluing).

\begin{figure}[h]
    \centering
    \includegraphics[scale=0.5]{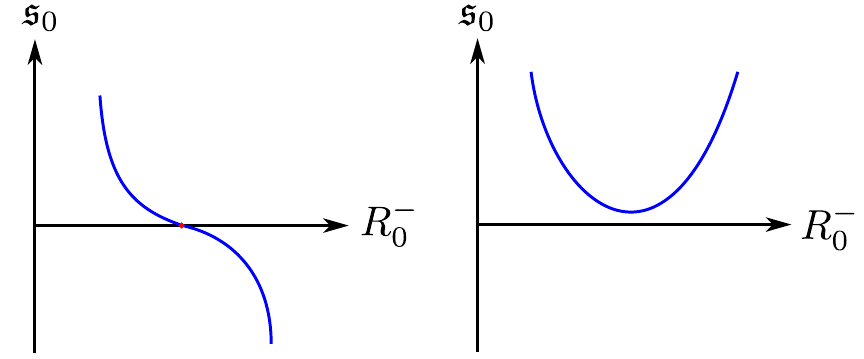}
    \caption{Linearized obstruction sections depending on the asymptotic of $u_-, u_0$, and $u_+$. For a fixed parameter $R_0$ large enough, the $x$-axis plots one of the gluing parameters $R_0^-$ and the $y$-axis plots the ``linearized" obstruction section $\os_0$ computed with parameters $(R_0^-, R_0 - R_0^+)$.}
    \label{fig: linearized section}
\end{figure}

Additionally, we describe gluing of total Fredholm index one, 2-level broken flowlines containing an index $0$ nontransverse component after a perturbation of the metric. We refer to this gluing informally as ``$t$-gluing" where $t$ denotes the perturbation parameter. A broken flowline with total Fredholm index $1$ is called $t$-gluable for a chosen perturbation $\{g_t\}_{t \in (0, \epsilon)}$ such that $(f, g_t)$ is Morse-Smale for each $t$, if there exists a $1$-parametric family 
\begin{align}
    u_t \in \ms(x_{-1}, x_1; g_t)
\end{align} 
converging to it.  Just as in the $0$-gluing case, whether a broken flowline can be $t$-glued or not depends on the asymptotics of the components and the choice of perturbation. 

 We illustrate both $0$-gluing and $t$-gluing on the upright torus. In Example~\ref{ex: 0-gluing on upright torus}, we show that a broken flowline is $0$-gluable if and only if all the components of the broken flowline lie on the same side of the plane $\{z=0\}$. This is an instance of the asymptotic conditions mentioned, and we explicitly work these out. For example, in the notation of Figure~\ref{fig: height function on the torus}, the broken flowline $(u_-^f,u_0^r,u_+^f)$ is 0-gluable, while $(u_-^b,u_0^r,u_+^f)$ is not.
 
 This type of gluing also appears in Kronheimer--Mrowka's \cite{Kronheimer-Mrowka_2007} where they describe Morse functions on manifolds with boundary. For example, we can view the torus in Figure~\ref{fig: height function on the torus} as the double of an annulus, as in Figure~\ref{fig:KM example}. Restricting the height function to the annulus gives us an example of the Kronheimer--Mrowka setup, and we observe the phenomenon of "boundary-obstructed" flowlines. More details are in Example~\ref{Kronheimer-Mrowka example}.
\begin{figure}[h]
    \centering
    \includegraphics[scale=0.5]{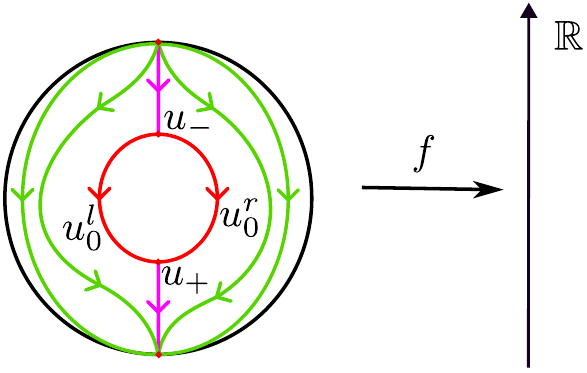}
    \caption{Height function on annulus in $\R^2$ is a Morse function with flowlines (with respect to the restriction of the standard Euclidean metric on $\R^2$) that are either entirely contained in the boundary or disjoint from the boundary. The flowlines $u_0^r$ and $u_0^l$ are ``boundary obstructed" as in \cite{Kronheimer-Mrowka_2007}. The three component flowlines $(u_-, u_0^l, u_+)$ and $(u_-, u_0^r, u_+)$ are $0$-gluable.}
    \label{fig:KM example}
\end{figure}

For $t$-gluing we can imagine ``tilting" the torus in $\R^3$ so that the $\Z_2$-symmetry is broken, refer Figure~\ref{fig: t-gluing on torus}, Example~\ref{ex: t-gluing on torus}. Figure~\ref{fig: t-gluing on torus} shows how one such tilting and the resulting glued flowlines. Note that $t$-gluing tells us that we could define the Morse complex even when the setup is not Morse-Smale. The differential is defined by counting all the (possibly broken) flowlines of total index $1$ that either survive under an a priori choice of perturbation or can be $t$-glued for the same perturbation. Theorem~\ref{thm: t-gluing} and Theorem~\ref{thm: multi level t-gluing} imply that this complex is precisely equal to the Morse complex for a nearby Morse-Smale pair. 
\begin{figure}[h]
    \centering
    \includegraphics[scale = 0.5]{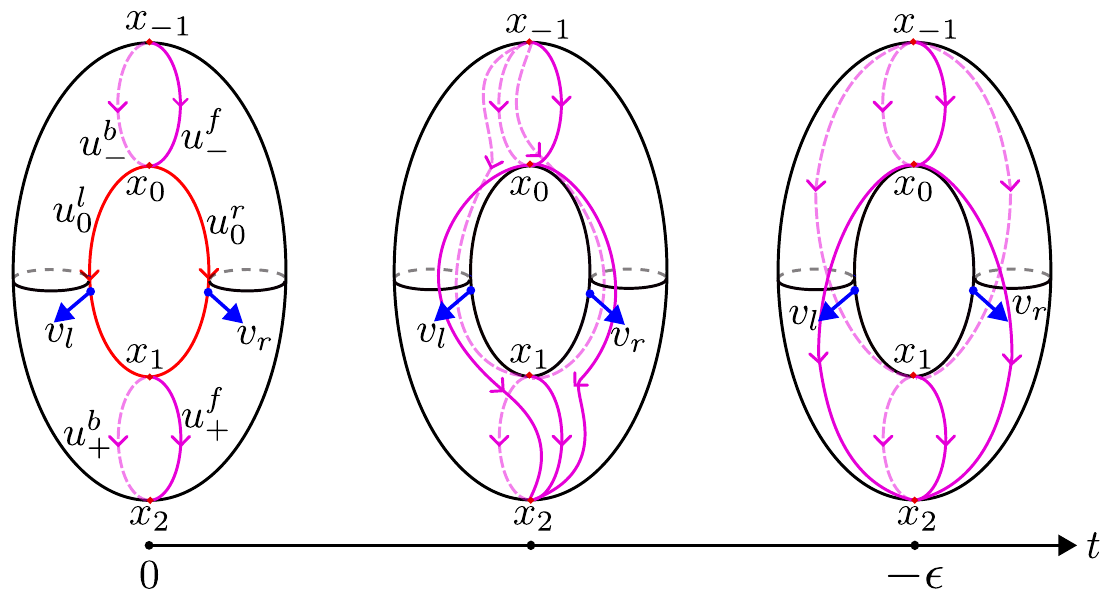}
    \caption{$t$-gluing on the torus via perturbing the metric the torus. Refer to Theorem \ref{thm: t-gluing} for how to read off which trajectories glue and which do not.}
    \label{fig: t-gluing on torus}
\end{figure}

\subsection{OBG overview} The obstruction bundle gluing technique can be boiled down to a sequence of steps.

Let us briefly explain these steps in $0$-gluing a $3$-component index $2$ broken flowline.
The setup involves two transversely cut-out flowlines, $u_\pm$, with a non-transversely cut-out flowline, $u_0$, in the middle, as shown in Figure~\ref{fig: height function on the torus}. 
\begin{enumerate}
\item We {\bf preglue} $(u_-, u_0, u_+)$, using pregluing parameters $(R_-, R_0^-, R_0^+, R_+)$ and a choice of two constants $h, \gamma > 0$. It will become clear later how $h$ and $\gamma$ should be chosen. Later, we will reduce the number of gluing parameters to two by setting $R_-$ and $R_+$ to be multiples of $R_0^-$ and $R_0^+$, respectively. It is easier to compute with four gluing parameters initially. This is Section \ref{subsec:preglue}.
\item We let $\psi_\pm$ be vector fields over $u_\pm$ and $\psi_0$ be a vector field over $u_0$. We {\bf deform the pregluing} by patching together these vector fields. This is Section \ref{sec: deformation of pregluing}.
\item We want to now find which perturbations of the pregluing satisfy the flowline equation. To do this, we {\bf split the equation} into three parts, $\Theta_\pm (\psi_\pm, \psi_0)$ as equations over the domains of $u_\pm$ and $\Theta_0 (\psi_-, \psi_0, \psi_+)$ as an equation over the domain of $u_0$. In the setup of this paper, all of these domains are $\R$, but it is still useful to remember the association. This is Section \ref{sec: equation for deformation to be a flowline}.
\item The flowlines $u_\pm$ are transversly cut out, and so we can {\bf solve the equations $\Theta_\pm$} in relatively straight forward manner. We use the chosen right inverses of the linearized differentials $D_\pm$ to construct contraction maps, whose fixed points give us the required solutions. We get $\psi_\pm$ as functions of $\psi_0$. This is Section \ref{sec: solving for psi + and psi -}.
\item Since $u_0$ is not transversely cut out, we have to work harder to solve $\Theta_0$, and it is here that the main OBG techniques show up. We further {\bf split $\Theta_0$} into its projection onto the image of $D_0$ and the cokernel of $D_0$. For this, we have to fix a projection onto the image. We can {\bf solve the projection to the image} in a similar way to $\Theta_\pm$ as $D_0$ is obviously surjective onto its image. This is Section \ref{sec: solving for psi 0}.
\item We plug in the $\psi_\pm$ and $\psi_0$ obtained from the previous steps into the projection of $\Theta_0$ onto the cokernel and {\bf view it as the obstruction} to gluing. In effect, we produce a finite-dimensional reduction of the original problem. We produce a finite dimensional manifold (in this case an open set in $\mathbb{R}^2$ parametrized by $(R_0^\pm)$) as a base space, a vector bundle $\ob$ over this base (in this case the fiber being $\coker D_0$), and a {\bf section $\os$} of this bundle, whose zeroes are in bijection with solutions of $\Theta_0$ and so also in bijection with gluings. This is Section \ref{sec: obstruction section and gluing map}.
\item Using analytic techniques, we {\bf show that $\os$ is ``$C^1$-close'' to another section $\os_0$} of the same bundle, that we refer to as the ``linearized" obstruction section. The section $\os_0$ is not an honest linearization but has the relevent properties of a linearization, namely, it consists of the ``largest" terms and is a good enough approximation. This step includes the trickiest analysis and relies on careful decay estimates of the flowlines $u_\pm$ and $u_0$, but also of the chosen cokernel element $\sigma_0$. This is explained in Sections \ref{sec: C0 closeness of linearized section} and \ref{sec: C1 estimates}.
\item One can by hand {\bf count of the number of zeroes of $\os_0$}\footnote{To be completely precise, the zero set $\os^{-1}(0)$ is a collection of $1$-dimensional manifolds even after modding out by the reparametrization of the domain. We are in effect counting the number of connected components of this 1-manifold near the broken trajectory. We do this by counting the number of zeroes of $\os_0$ after fixing a gluing parameter $R_0$.} and thus count the number of gluings. We note that the count of zeroes is easy to compute and, to a large extent, fully combinatorial, once the hard analysis work has been done. 
\end{enumerate}

The key to the success of this strategy lies in the following two points:
\begin{enumerate}
\item A good understanding of the cokernel of $u_0$. In this case, we were able to completely describe it via the identification to a specific vector field on the flowline $u_0$ (Equation~\ref{eqn: identification of coker with perp}).
\item Being able to identify which terms in $\os$ make the largest contributions, and how they vary on the base of the obstruction bundle, that is, with varying gluing parameters. This includes a careful understanding of asymptotic decays of not only $u_\pm$ and $u_0$, but also of the vector fields $\psi_\pm$, $\psi_0$, and the cokernel element $\sigma_0$.
\end{enumerate}
\subsubsection{More technical remarks}
In this subsection, we compare our construction to that of \cite{Hutchings-Taubes_2009, Hutchings-Taubes_2007} and highlight features of our construction that differ from the \cite{Hutchings-Taubes_2009, Hutchings-Taubes_2007} one.

While it is true our proof follows the same strategy as in \cite{Hutchings-Taubes_2009, Hutchings-Taubes_2007}, 
our analysis has one additional complication.
In \cite{Hutchings-Taubes_2009, Hutchings-Taubes_2007}, they glue three-level buildings, where the middle levels are branched covers of trivial cylinders. As the middle levels are (branched covers of) trivial cylinders, they do not contribute a pregluing error. 

In our case, the middle segment is not a trivial cylinder and hence contributes a pregluing error. However, it turns out that this new extra pregluing error does not substantially change the obstruction section.  
The two main technical challenges from this new contribution are:
\begin{itemize}
\item The new pregluing error changes the form of the equations in a way that makes it difficult to invoke exponential decay estimates directly from \cite{Hutchings-Taubes_2009, Hutchings-Taubes_2007};
\item The new pregluing error
needs to be shown to be small compared to the linearized section.
\end{itemize}
The main difficulty with the first point is that the exponential decay estimates \cite{Hutchings-Taubes_2009, Hutchings-Taubes_2007} require the equations $\Theta_\pm$ and $\Theta_0$ to be ``autonomous'' in certain regions of the domain for us to see exponential decay (See Proposition~\ref{prop:autonomous} and the surrounding discussion for an elaboration). However, the new pregluing error introduced by the middle segment loses this autonomous behaviour. 
We get around this challenge by assuming that the metric is Euclidean in a Morse chart around the critical point, which makes the equation ``autonomous'' in the correct regions to invoke the exponential decay estimates of \cite{Hutchings-Taubes_2009, Hutchings-Taubes_2007}. This exponential decay over the autonomous regions is essential to the analysis of the obstruction section. Dropping the assumption on the metric would imply a much more careful pregluing to reduce the pregluing error (or rather, reduce the support of the pregluing error). We explain how we expect this to be done in Remark~\ref{rem:nonlinear_metric}. We expect a similar construction to work in the pseudoholomorphic case.

For the second point, even with the exponential estimates obtained, there is still considerable work to be done as the new pregluing error is not a priori small compared to the other terms in the obstruction section. However, the new pregluing error itself does not appear directly in the obstruction section, but instead appears implicitly through the vector fields $\psi_\pm$. 
We capitalize on this implicit dependence through a careful pregluing construction -- we choose asymmetrical gluing profiles that depend explicitly on the sizes of the different eigenvalues of the Hessian at each critical point, so that the effects introduced by the undesired pregluing error have enough room to ``decay away''. Consequently, the estimates in Sections \ref{sec: C0 closeness of linearized section} and \ref{sec: C1 estimates} are slightly more involved than the analogous estimates appearing in \cite{Hutchings-Taubes_2009, Hutchings-Taubes_2007}.

Another difference from \cite{Hutchings-Taubes_2009, Hutchings-Taubes_2007} lies in how we count the zeroes of the obstruction section.
In \cite{Hutchings-Taubes_2009, Hutchings-Taubes_2007}, they show that the (after restricting to a ``slice'' of the domain) obstruction section has the same number of zeroes as the linearized obstruction section over $\mathbb{Z}$, and then count the number of zeroes of the linearized section. This is partly because the base of their obstruction bundle has a highly complex topology. However, we can show directly that the linearized obstruction section and obstruction section are ``$C^1$-close'' to each other, which provides an explicit description of the zero set of the obstruction section.

\section{Acknowledgements}
The authors would like to thank Yasha Eliashberg, Helmut Hofer, Michael Hutchings, Jo Nelson, and Josh Sabloff for fruitful discussions. During part of this project, the first author was at the Institute for Advanced Study, Princeton, and was supported by NSF grant DMS-1926686, and is currently supported by the FIM at ETH Z\"urich. The second author was partially supported by  ERC Starting Grant No. 851701 and ANR COSY ANR-21-CE40-0002.

\section{Preliminaries}

Consider a Morse function $f: M \to  \R$ on a closed compact manifold $M$. Let $\crit (f) = \{x \in M| df = 0\}$ be the set of critical points of $f$. For a point $x \in \crit(f)$, denote the Hessian by $\hess_x(f)$ and the index of $x$ by $\ind(x)$. 
For a Riemannian metric $g$, denote the gradient vector field by $\nabla f$
and the negative gradient flow by $\phi_f$, that is, 
\begin{align}
\varphi_f : \R \times M \to M, \quad \text{ satisfying}\quad \frac{\p \varphi_f}{\p t}(t, x) = -\nabla f(\varphi_f(t, x)).
\end{align}
A {\bf flowline} is a map $u: \R \to M$  such that $\frac{du}{dt}(t) = -\nabla f(u(t))$.
Denote the limits of a flowline (they exist as $M$ is closed) by
$u(\pm\infty) : = \lim_{t \to \pm \infty} u(t)$. 
For $x \in \crit(f)$, the {\bf stable} submanifold of $x$ is given by
\begin{align}
W^s(x) = \{y \in M | \lim_{t \to + \infty} \varphi_f(t, y) = x \},
\end{align}
and the {\bf unstable} manifold of $x$ is given by
\begin{align}
W^u(x) = \{y \in M | \lim_{t \to - \infty} \varphi_f (t, y) = x \}.
\end{align}
The pair $(f,g)$ is said to be {\bf Morse-Smale} if for any two critical points $x, y \in \crit(f)$, $W^s(x) \cap W^u(y)$ intersect transversely. We will consider a slight relaxation of the Morse-Smale condition, namely, allowing clean intersections instead of only transverse intersections.

Consider a pair $(f,g)$ of a Morse function $f: M \to \R$ and a metric $g$ on $M$.
For two critical points $x, y \in \crit(f)$, let the {\bf moduli space of parametrized flowlines} be
\begin{align}
\widehat \ms(x,y) = \{u \in C^\infty(\R, M) \,:\, \dot{u} + \nabla f \circ u = 0, u(-\infty) = x ,  u(\infty) = y\}
\end{align}
endowed with the topology induced by convergence in $C^\infty_{\rm loc}$ on compact subsets of $\R$.
We get the {\bf moduli space of unparametrized flowlines} by quotienting $\widehat \ms$ by the (free) action of $\R$ by translation on the domain,
\begin{align}
\ms(x,y) : = \widehat\ms(x,y)/\R.
\end{align}
Then the topology on $\ms(x,y)$ is induced by $C^\infty_{\rm  loc}$ convergence on the representative flowlines up to translation in the domain. We will abuse notation and denote elements of $\ms$ by $u: \R \to M$ or $u$, even though we mean an equivalence class.
Let the {\bf moduli space of broken flowlines} between $x$ and $y$ be
\begin{align}
\ol \ms(x,y) = \cup_{c_i \in \crit(f)} \ms(x,c_1) \times \ms(c_1, c_2) \times \dots \ms(c_j, y).
\end{align}
We consider $\ol \ms(x,y)$ again with the topology of $C^\infty_{\rm loc}$ convergence on compact sets up to translation. We call an element of $\ol \ms$ a {\bf broken flowline} and each $u_i$ a component of $\mathbf{u} = (u_1, \dots, u_k)$.

\begin{lem}\label{lem: compactification}
The space $\ol \ms(x, y)$ is compact with respect to the $C^\infty_{\rm loc}$ convergence.
\end{lem}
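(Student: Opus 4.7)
The plan is to follow the standard broken-flowline compactness argument, which in Morse theory is considerably simpler than in the $J$-holomorphic setting because there is no bubbling and because $f$ itself serves as a uniform energy. Given a sequence $\mathbf{u}^n \in \ol\ms(x,y)$, the total ``energy'' of each $\mathbf{u}^n$ equals $f(x)-f(y)$, and every component connects critical points in $[f(y),f(x)]\cap \crit(f)$. Since $f$ has only finitely many critical values (as $M$ is compact and $f$ is Morse), the number of components of $\mathbf{u}^n$ is uniformly bounded. After passing to a subsequence we may therefore assume all $\mathbf{u}^n$ have the same number of components $k$ and, for each $i$, the $i$-th component $u_i^n$ lies in $\ms(c_{i-1}, c_i)$ for a fixed sequence of critical points. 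It thus suffices to prove compactness component by component, allowing each component to break further in the limit.

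For a single component sequence $u^n \in \ms(c_{i-1},c_i)$, I would first select canonical representatives $\tilde u^n:\R\to M$ by requiring $f(\tilde u^n(0))=c_*$ for some fixed regular value $c_*\in (f(c_i),f(c_{i-1}))$; this uses that $f\circ\tilde u^n$ is strictly decreasing. Since $M$ is compact, $|\dot{\tilde u}^n|=|\nabla f(\tilde u^n)|$ is uniformly bounded, and differentiating the ODE $\dot u=-\nabla f\circ u$ gives uniform $C^k$ bounds for all $k$. By Arzel\`a--Ascoli and a diagonal argument, a subsequence converges in $C^\infty_{\rm loc}$ to a smooth map $u^\infty:\R\to M$ which again satisfies $\dot u^\infty=-\nabla f\circ u^\infty$ and which is nonconstant since $f(u^\infty(0))=c_*$. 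The forward and backward limits $u^\infty(\pm\infty)$ exist and lie in $\crit(f)$: $f\circ u^\infty$ is monotone decreasing and bounded, so $f(u^\infty(t))$ has a limit; since critical points are isolated and $M$ is compact, a standard Lyapunov argument shows the trajectory itself converges to a critical point.

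To recover all of $\ol\ms(x,y)$, one then iterates at each end. If $u^\infty(-\infty)=c_{i-1}$ but $u^\infty(+\infty)=c'\neq c_i$, then $f(c')>f(c_i)$ and the ``missing'' energy $f(c')-f(c_i)>0$ must be carried by a different time window. I would choose a new sequence of shifts $T_n\to+\infty$ with $f(u^n(T_n))$ equal to a regular value below $f(c')$, extract another $C^\infty_{\rm loc}$-convergent subsequence, and repeat. At each stage the uncaptured energy drops by at least the minimum positive gap between consecutive critical values in $[f(y),f(x)]$, so the process terminates after finitely many steps and produces a broken flowline in $\ol\ms(c_{i-1},c_i)$ as a $C^\infty_{\rm loc}$ limit of $u^n$.

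The main obstacle is the bookkeeping in this last iterative extraction: one must justify that, after all shifts and subsequence passages, the pieces match up end-to-end at the correct critical points and no energy is lost. This is handled by checking that for each $i$, $\lim_{n} f(u^n(T_n^i))=f(c_i^\infty)$ for the limit critical points $c_i^\infty$, which follows from the monotonicity of $f\circ u^n$ together with the assumed convergence at the regular-value sections. Once this is done, the compatibility of the broken components across different choices of shift is automatic from the uniqueness of forward/backward limits in the flow.
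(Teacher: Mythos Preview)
Your proposal is correct and is precisely the standard broken-trajectory compactness argument; the paper itself does not give a proof but simply refers the reader to \cite[Section 3.2.b]{Audin-Damian_2014}, which is exactly the argument you have outlined (energy bound from $f$, uniform $C^k$ bounds from the ODE, Arzel\`a--Ascoli plus iterated re-centering at regular values to capture all pieces).
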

The compactness proof is the same as that found in various places, for example, see \cite[Section 3.2.b]{Audin-Damian_2014}. We note that unless we assume that $(f,g)$ is Morse-Smale, $\ol\ms(x,y)$ may not be a manifold of the right dimension. For example, there may be broken flowlines in $\ol\ms(x,y)$ that are isolated even when there exist components of $\ol \ms(x,y)$ that have dimensions greater than or equal to one. 
To set up the moduli spaces of flowlines and the required Fredholm theory, we include only the necessary definitions and properties here. We refer the reader to \cite[Section 2.1]{Schwarz_1993} for details. 

We compactify $\R$ as $\ol \R = \R \cup \{ \pm \infty\}$ equipped with the structure of a manifold with boundary by the requirement that 
\begin{align}
h:\ol \R \to [-1, 1], \quad t \mapsto \frac{t}{\sqrt{1 + t^2}}
\end{align}
be a diffeomorphism.
Given arbitrary points $x,y \in M$ we define the set of smooth, compact curves $C^\infty_{x,y}$ as
\begin{align}
C^\infty_{x,y} := C^\infty_{x,y}(\ol \R, M) = \{ u \in C^\infty(\ol \R, M) \, |\, u(-\infty) = x, u(+\infty) = y\}.
\end{align}
Fix a complete metric $g$ on $M$; denote the exponential map by
\begin{align}
\exp : TM \supset \mathcal{D} \to M
\end{align}
where $\mathcal{D}$ is an open and convex neighbourhood of the zero section in the tangent bundle. For any smooth, compact curve $u \in C^\infty(\ol \R, M)$, we denote the pull-back bundles by $u^* \mathcal{D} \subset u^* TM$.
We get a well-defined map
\begin{align}
\exp_u : H^{1,2}_{\R} (u^* \mathcal{D}) &\to C^0 (\ol \R, M)\\
s &\mapsto \exp \circ \psi, \text{ where }(\exp \circ \psi)(t) = \exp_{u(t)} (\psi(t)).
\end{align}
So, we can define the space of curves
\begin{align}
\cP^{1,2}_{x,y} = \cP^{1,2}_{x,y} (\R , M) = \{ \exp \circ \psi \in C^0(\ol \R, M)|\psi \in H^{1,2}_\R(u^*\mathcal{D}), u \in C^\infty_{x,y}(\ol \R, M)\}.
\end{align}

The space of curves $\cP^{1,2}_{x,y} \subset C^0_{x,y}(\ol \R, M)$ is equipped with a Banach manifold structure via the atlas of charts
\begin{align}
\left\{ H^{1,2}_\R(u^* \mathcal{D}), \exp_u \right\}_{u \in C^\infty_{x,y}(\ol \R, M)}.
\end{align}
We represent the tangent space of $\cP^{1,2}_{x,y}$ as
\begin{align}
T\cP^{1,2}_{x,y} = H^{1,2}_\R ({\cP^{1,2}_{x,y}}^*TM) = \bigcup_{\psi \in \cP^{1,2}_{x,y}} H^{1,2}_\R(\psi^* TM).
\end{align} 
This is a Banach bundle on $\cP^{1,2}_{x,y}$ with $H^{1,2}(\R, \R^n)$ as the characteristic fiber. Similarly, we can define the $L^2_\R({\cP^{1,2}_{x,y}}^* TM)$ as
\begin{align}
L^{2}_\R ({\cP^{1,2}_{x,y}}^*TM) = \bigcup_{\psi \in \cP^{1,2}_{x,y}} L^{2}_\R(\psi^* TM).
\end{align} 

\begin{prop}\cite[Proposition 2.8]{Schwarz_1993}
Let $f \in C^\infty(M, \R)$ be an arbitrary smooth real function on $M$. Then, given critical points $x,y \in \crit f$ and a metric $g$, the gradient $\nabla f$ with respect to $g$ induces a smooth 
section in the $L^2$-Banach bundle,
\begin{align}\label{eqn: defn of section F}
F: \cP^{1,2}_{x,y} &\to L^2_\R ({\cP^{1,2}_{x,y}}^* TM)\\
u &\mapsto \dot{u} + \nabla f \circ u.
\end{align}
The zeroes of the section $F$ are exactly the flowlines from $x$ to $y$. 
\begin{align}
\widehat \ms (x,y) = F^{-1}(0) \subset \cP^{1,2}_{x,y}.
\end{align}
\end{prop}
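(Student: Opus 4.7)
The plan is to verify, in order, that $F$ is a well-defined section of the $L^2$-bundle, that it is smooth, and that its zero locus is exactly $\widehat\ms(x,y)$. All three checks are naturally performed in the standard chart $\psi \mapsto \exp_{u_0}(\psi)$ around a smooth reference curve $u_0 \in C^\infty_{x,y}(\ol\R, M)$, with the $L^2$ fiber identified with $L^2_\R(u_0^* TM)$ via parallel transport along the radial geodesics; after that, everything reduces to decay analysis plus standard Nemytskii theory.

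For well-definedness, with $u = \exp_{u_0}(\psi)$ and $\psi \in H^{1,2}_\R(u_0^* \mathcal{D})$, the velocity $\dot u$ is in $L^2$ by construction. For the gradient term, nondegeneracy of each $p \in \crit f$ yields $\|\nabla f(q)\|_g \leq C\, d_g(q, p)$ on a neighborhood of $p$, and smoothness of $u_0$ at $t = -\infty$ in the compactification $\ol\R$ forces $d_g(u_0(t), x) = O(t^{-2})$, since the compactification coordinate $h(t) = t/\sqrt{1+t^2}$ satisfies $h(t)+1 = O(t^{-2})$; the analogous bound holds at $+\infty$. Combined with $\psi \in H^{1,2}(\R) \hookrightarrow C^0(\R) \cap L^2(\R)$, the triangle inequality bounds $\|\nabla f \circ u\|_g$ pointwise by $C(|u_0 - x|_{\mathrm{loc}} + |\psi|)$ outside a compact set in $\R$, which is $L^2$; on a compact subset of $\R$ the expression is bounded by continuity, so $F(u) \in L^2_\R(u^* TM)$.

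For smoothness, the pull-back of $F$ to the chart is
\begin{align}
\widetilde F(\psi)(t) = \Phi(\psi)^{-1}\!\left(\tfrac{d}{dt}\exp_{u_0(t)}(\psi(t)) + \nabla f(\exp_{u_0(t)}(\psi(t)))\right),
\end{align}
where $\Phi(\psi)$ denotes the parallel transport along the radial geodesic at each $t$. Expanding in an orthonormal frame along $u_0$, the derivative summand decomposes into a bounded linear operator in $\dot\psi$ plus a smooth Nemytskii contribution built from the Christoffel symbols of $\exp$; the second summand is a Nemytskii operator associated with the smooth symbol $q \mapsto \nabla f(q)$. Smoothness of each such Nemytskii operator as a map $H^{1,2}_\R \to L^2_\R$ follows, via the one-dimensional Sobolev embedding $H^{1,2}(\R) \hookrightarrow C^0_b(\R)$ and the iterated chain rule, from boundedness of the symbol and its derivatives, once one first subtracts the smooth reference value $\nabla f \circ u_0$ so that the resulting Nemytskii symbol vanishes at $\psi = 0$; the decay analysis of the previous step ensures that each order of differentiation still lands in $L^2$ at infinity.

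The zero-set identification is essentially tautological: if $F(u) = 0$ then $\dot u = -\nabla f \circ u$ holds in $L^2_{\mathrm{loc}}$, and ODE bootstrapping upgrades $u$ to $C^\infty$, while the limits $u(\pm\infty) = x, y$ are already built into membership in $\cP^{1,2}_{x,y}$; conversely, any smooth flowline joining $x$ and $y$ decays exponentially to the hyperbolic zeros of $\nabla f$ by the stable/unstable manifold theorem, and therefore belongs to $\cP^{1,2}_{x,y}$ with $F(u) = 0$ automatic. The main technical obstacle I anticipate is precisely the smoothness of the Nemytskii operator $\psi \mapsto \nabla f \circ \exp_{u_0}(\psi)$ between infinite-interval Sobolev spaces, where the weak $L^2$-decay of $\psi$ at infinity has to be traded against the vanishing of $\nabla f$ at the critical endpoints; the cleanest way to organize this is to reduce everything via the compactification $\ol\R$ to standard Nemytskii analysis on a compact interval with symbols that extend smoothly to the boundary, which lets one import the classical theory directly.
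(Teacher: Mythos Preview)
The paper does not supply its own proof of this proposition; it is quoted as \cite[Proposition 2.8]{Schwarz_1993} in the preliminaries and used as a black box, so there is no in-paper argument to compare against. Your outline is essentially the standard route taken in Schwarz's monograph: work in an $\exp$-chart around a smooth reference curve $u_0 \in C^\infty_{x,y}(\ol\R,M)$, verify $L^2$-integrability of each summand using nondegeneracy of the critical endpoints together with the decay forced by smoothness on the compactified line, and reduce smoothness of $F$ to smoothness of a Nemytskii operator between $H^{1,2}(\R)$ and $L^2(\R)$.

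Two small points worth tightening. First, your ``$\dot u \in L^2$ by construction'' also requires $\dot u_0 \in L^2(\R)$, which is true but for the reason that $h'(t)=(1+t^2)^{-3/2}=O(|t|^{-3})$ is $L^2$ and $u_0\circ h^{-1}$ is smooth on $[-1,1]$; only then does the chain rule put $\dot u$ in $L^2$. Second, for the Nemytskii smoothness, Schwarz's actual argument does not go via compactification to a finite interval but works directly on $\R$, using the $H^{1,2}(\R)$-algebra property and the fact that the subtracted operator $\psi \mapsto \nabla f(\exp_{u_0}\psi) - \nabla f(u_0)$ has bounded linear part (controlled by the Hessian at the endpoints) plus a remainder quadratic in $\psi$. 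Your compactification idea is viable in principle, but it forces you to check that the pulled-back Sobolev norms on $[-1,1]$ agree with the original $H^{1,2}$ and $L^2$ norms on $\R$ up to equivalence, which is an extra (though routine) verification involving the Jacobian $h'$.
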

For a zero $u \in F^{-1}(0)$, we can look at the projection of the differential of $F$ to the fibre $L^2(u^* TM)$, referred to as the {\bf linearization} of $F$ and denoted as $D_u$. With the choice of a metric $g$ and in a local chart $H^{1,2}_\R (u^*\mathcal{D})$ around $u \in \cP^{1,2}_{x,y}$, the linearization of $F$ at $u$ is of the form
\begin{align}
D_u : H^{1,2} (u^* TM) \to L^2 (u^* TM)\\
D_u (\psi) = \nabla_s \psi + (\hess_f \circ u)\psi.
\end{align} 
Here $\hess_f$ is the Hessian of $f$ with respect to the metric $g$. At $\pm \infty$, $\hess_f \circ u$, are independent of the metric $g$, non-degenerate, and self-adjoint on $u^* TM|_{\pm \infty}$ with respect to $g$. The linearization $D_u F$ is a Fredholm map with Fredholm index
\begin{align}
\find D_u = \mu(\hess_f(u(- \infty))) - \mu(\hess_f(u(+\infty))),
\end{align}
where $\mu$ denotes the number of negative eigenvalues counted with multiplicity. Note that $\mu(\hess_f(x)) = \ind(x)$ for any $x \in \crit f$.
For a flowline $u$, we refer to the $\find D_u$ as the {\bf Fredholm index} of $u$, that is,
\begin{align}
\find (u) := \find D_u = \ind (x) - \ind (y).
\end{align} 
Let the {\bf total Fredholm index} of a broken flowline $\mathbf{u} = (u_1, \dots, u_k) \in \ol \ms(x,y)$ be the sum of the Fredholm indices of the components $u_i$. In particular, we get
\begin{align}
    \find(\mathbf{u}) = \sum_{i =1}^k \find(u_i) = \ind(x) - \ind(y). 
\end{align}

\section{Transversality and Cokernels}
When the pair $(f,g)$ is assumed to be Morse-Smale, $\ms (x,y)$ is a manifold of dimension $\ind(x) - \ind(y)$ and the linearized operator $D_u$ along a gradient flowline $u$ is surjective with empty cokernel. As explained in the introduction, we relax the Morse-Smale condition to include flowlines with nontrivial cokernels.

We begin by understanding the cokernel of Morse flowlines. As we shall see, having precise control of the cokernel elements will be essential for understanding the obstruction section. Most of the section below is taken from Proposition 10.2.8 of \cite{Audin-Damian_2014}.

We begin by describing the notion of a ``resolvent'' of the linear differential operator.  Let $u$ denote a gradient flowline between critical points $x$ and $y$. Let $D_u$ denote the linearization of the gradient flow equation and let $D_u^*$ denote its formal adjoint. 
If $s$ and $t$ are two real numbers, then let
\begin{align}
\Psi_{(s,t)} : T_{u(s)} M \to T_{u(t)} M
\end{align}
be the {\bf resolvent} of the linear differential equation $D_u = 0$. This means that the map sends a vector $Y \in T_{u(s)} Y$ to the value $\tilde{Y}(t)$ when $\tilde{Y}: \R \to \R^n$ is a solution $D_u \tilde{Y} = 0$ with $\tilde{Y}(s) = Y$. 

Let $W^u(x)$ denote the unstable manifold of $x$ and let $W^s(y)$ denote the stable manifold of $y$.
We also let \[E^u(s):= \{\tilde{Y}\in T_{u(s)}M | \lim_{t\rightarrow -\infty } \Psi _{s,t} \tilde{Y} =0\} \]
and
\[
E^s(s):= \{\tilde{Y}\in T_{u(s)}M | \lim_{t\rightarrow \infty } \Psi _{s,t} \tilde{Y} =0.
\]
Then it is not hard to see\footnote{For instance, we can see this by realizing $W^u(x)$ as the set of maps $u:[0,\infty) \rightarrow M$ that satisfy $u(-\infty) =x$ and $u'+\nabla f(u(s))=0$. See for instance Section 8 of \cite{frauenfelder2020modulispacegradientflow}.} that 
\[
E^u(s) =TW^u_{u(s)}(x), \quad E^s(s)= TW^s_{u(s)}(y).
\]
From which we can deduce the following proposition.
\begin{prop}[Proposition 10.2.8 in \cite{Audin-Damian_2014}]Consider a flowline $u: \R \to M$ for $(f,g)$.
For any $s \in \mathbb{R}$ we have
\[
ker D_u \cong T_{u(s)}W^u(x) \cap T_{u(s)}W^s(y)
\]
\end{prop}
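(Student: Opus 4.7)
The plan is to exhibit the isomorphism via the evaluation map $\mathrm{ev}_s: \ker D_u \to T_{u(s)}M$, $\psi \mapsto \psi(s)$, and to identify its image with the intersection $T_{u(s)}W^u(x) \cap T_{u(s)}W^s(y)$ using the resolvent together with the characterization of $E^u(s)$ and $E^s(s)$ recalled just before the statement.

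First, I would show that $\mathrm{ev}_s$ is injective and that its image is contained in the asserted intersection. Any $\psi \in \ker D_u$ is a smooth solution of the linear ODE $\nabla_s \psi + (\hess_f \circ u)\psi = 0$, so it is uniquely determined by $\psi(s)$, and moreover $\psi(t) = \Psi_{(s,t)}\psi(s)$ for all $t$. Since $\psi \in H^{1,2}(u^*TM)$, standard elliptic-type arguments for this first-order ODE (or, more concretely, the explicit eigenvalue decomposition of $\hess_f$ at $x$ and $y$) force $\psi(t) \to 0$ as $t \to \pm\infty$. Therefore $\Psi_{(s,t)}\psi(s) \to 0$ both as $t \to -\infty$ and as $t \to +\infty$, which by definition places $\psi(s)$ in $E^u(s) \cap E^s(s) = T_{u(s)}W^u(x) \cap T_{u(s)}W^s(y)$.

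Conversely, I would construct the inverse. Given $Y \in T_{u(s)}W^u(x) \cap T_{u(s)}W^s(y)$, define $\psi_Y(t) := \Psi_{(s,t)}Y$. By construction $D_u \psi_Y = 0$ and $\psi_Y(s) = Y$. The membership $Y \in E^u(s)$ yields $\psi_Y(t) \to 0$ as $t \to -\infty$, and the membership $Y \in E^s(s)$ yields $\psi_Y(t) \to 0$ as $t \to +\infty$. To upgrade these limits to $H^{1,2}$ integrability, I would linearize the flow near the critical points: since $\hess_f(x)$ and $\hess_f(y)$ are nondegenerate and self-adjoint, solutions of $D_u \psi = 0$ decay exponentially at the rate of the smallest modulus eigenvalue as soon as they tend to zero, and the equation $\nabla_s \psi = -(\hess_f \circ u)\psi$ then gives the matching exponential decay of $\nabla_s \psi$. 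Hence $\psi_Y \in H^{1,2}(u^*TM)$ and $\psi_Y \in \ker D_u$, so $\mathrm{ev}_s$ is onto the intersection.

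The two maps $\psi \mapsto \psi(s)$ and $Y \mapsto \psi_Y$ are inverse to each other by the uniqueness of ODE solutions, and both are linear, so they give the desired isomorphism $\ker D_u \cong T_{u(s)}W^u(x) \cap T_{u(s)}W^s(y)$. The main technical point — and really the only nontrivial obstacle — is the equivalence between the asymptotic decay conditions built into the definitions of $E^u$ and $E^s$ and genuine $H^{1,2}$ membership; this comes down to the exponential decay estimate near nondegenerate rest points and will be invoked repeatedly later in the paper, so isolating it here is natural.
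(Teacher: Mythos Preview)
Your proposal is correct and follows essentially the same approach as the paper: the paper identifies $\ker D_u$ with $T_{u(s)}W^u(x)\cap T_{u(s)}W^s(y)$ via exactly the map $Y_s \mapsto \Psi_{(s,\cdot)}Y_s$, after having set up the spaces $E^u(s)$ and $E^s(s)$ and identified them with the tangent spaces to the (un)stable manifolds. The paper is in fact terser than your write-up, simply citing Audin--Damian for the details; your additional remarks on why $H^{1,2}$ membership is equivalent to decay at the ends are correct and indeed anticipate the exponential decay estimates used later.
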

The identification is given as follows: given $Y_s \in  T_{u(s)}W^u(x) \cap T_{u(s)}W^s(y)$, the corresponding kernel element is given by the vector field $Y(s)$ that uniquely solves $D_uY=0$ satisfying the initial condition $Y(s) = Y_s$.

Similarly, studying the resolvent of the adjoint operator $D_u^*$ gives us the following.
\begin{prop} [Proposition 10.2.8 in \cite{Audin-Damian_2014}]
Consider a flowline $u: \R \to M$ for $(f,g)$.
We have
\begin{align}\label{eqn: identification of coker with perp}
\coker D_u \cong \ker D_u^* \cong (T_{u(s)} W^u(x) + T_{u(s)} W^s(y))^\perp.
\end{align}
\end{prop}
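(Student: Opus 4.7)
The plan is to handle the two isomorphisms separately. The first, $\coker D_u \cong \ker D_u^*$, is standard Fredholm theory: since $D_u : H^{1,2}(u^*TM) \to L^2(u^*TM)$ is Fredholm, its image is closed, and a standard duality argument (Hahn--Banach, plus that $L^2$ is self-dual under the $g$-pairing on $u^*TM$) identifies $\mathrm{Image}(D_u)^{\perp} = \ker D_u^*$, so $\coker D_u = L^2/\mathrm{Image}(D_u) \cong \ker D_u^*$. Here $D_u^*$ is the formal $L^2$-adjoint of $D_u$, which, using that $\hess_f$ is self-adjoint with respect to $g$, takes the explicit form $D_u^*\eta = -\nabla_s \eta + (\hess_f\circ u)\eta$.

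For the second isomorphism, I would mimic the strategy of the preceding proposition. Evaluation at $s$ gives an injective linear map $\ker D_u^* \hookrightarrow T_{u(s)}M$, $\eta \mapsto \eta(s)$, by uniqueness of solutions to the ODE $D_u^*\eta=0$. Its image is $E^u_*(s)\cap E^s_*(s)$, where $E^u_*(s)$ (resp.\ $E^s_*(s)$) is the set of $\eta_s \in T_{u(s)}M$ whose evolution under the resolvent $\Psi^*_{s,t}$ of $D_u^* = 0$ decays as $t\to -\infty$ (resp.\ $t\to +\infty$). The heart of the argument is then the identification
\begin{align}
E^u_*(s) \;=\; (T_{u(s)}W^u(x))^{\perp}, \qquad E^s_*(s) \;=\; (T_{u(s)}W^s(y))^{\perp}.
\end{align}

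To establish this, I would exploit the duality between the two resolvents: if $D_u Y = 0$ and $D_u^* \eta = 0$, then $\frac{d}{dt}\langle Y(t),\eta(t)\rangle_g = \langle \nabla_t Y,\eta\rangle + \langle Y,\nabla_t \eta\rangle = -\langle \hess_f\,Y,\eta\rangle + \langle Y,\hess_f\,\eta\rangle = 0$ by self-adjointness of $\hess_f$. Hence $\langle Y(t),\eta(t)\rangle$ is constant in $t$. Taking $Y_s \in E^u(s) = T_{u(s)}W^u(x)$ and $\eta_s \in E^u_*(s)$, both $Y(t)$ and $\eta(t)$ are bounded (in fact decay) as $t\to-\infty$, and at least one side goes to zero, so $\langle Y_s,\eta_s\rangle = 0$. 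This gives $E^u_*(s) \subseteq (E^u(s))^{\perp}$. A dimension count upgrades this to equality: the asymptotic equation at $x$ is $\dot\eta = \hess_f(x)\eta$, whose solutions decaying at $-\infty$ are exactly those lying in the positive eigenspace of $\hess_f(x)$, which has dimension $n - \ind(x) = n - \dim E^u(s)$. The same argument at $+\infty$ handles $E^s_*(s)$. Combining the two orthogonal identifications with the elementary identity $A^\perp \cap B^\perp = (A+B)^\perp$ yields the proposition.

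The main obstacle will be making the asymptotic dimension count fully rigorous, namely verifying that $E^u_*(s)$ and $E^s_*(s)$ are honest linear subspaces of the expected dimensions, and that the pointwise limit of $\langle Y(t),\eta(t)\rangle$ as $t\to\pm\infty$ actually vanishes. Both points reduce to standard stable/unstable manifold analysis for the linearized ODE at the hyperbolic rest points $x$ and $y$: the Hessian matrices at the critical points are nondegenerate, so exponential dichotomies apply, producing the requisite splittings into invariant subspaces and, in particular, pointwise exponential decay (not just $L^2$ decay) of kernel and adjoint-kernel elements. Once these asymptotics are invoked, the inner product limits and dimension equalities follow immediately, completing the proof.
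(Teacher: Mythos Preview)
Your proposal is correct and follows precisely the approach the paper indicates: the paper does not spell out a proof but simply says ``studying the resolvent of the adjoint operator $D_u^*$ gives us the following'' and cites Audin--Damian, and your argument---evaluation at $s$, the constancy of $\langle Y(t),\eta(t)\rangle$ from self-adjointness of $\hess_f$, and the dimension count via the asymptotic hyperbolic splitting---is exactly that resolvent analysis carried out in detail. The obstacles you flag (exponential dichotomy at the hyperbolic endpoints ensuring that $E^u_*(s)$ and $E^s_*(s)$ are subspaces of the correct dimension and that the inner product limit vanishes) are real but, as you note, standard.
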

In future sections, we will use this discussion to pick cokernel elements with properties we prefer.

\begin{ex}\label{ex: choosing cokernel elements}
\begin{figure}[h]
    \centering
    \includegraphics[scale=0.5]{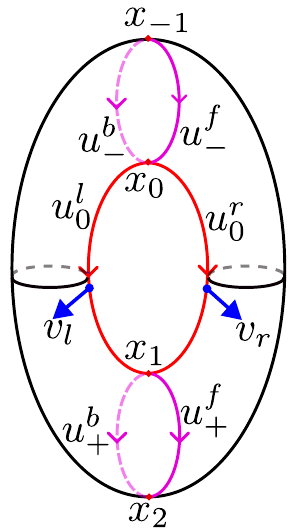}
    \caption{Vectors $v_l$ and $v_r$ generate $\coker D_{u_0^l}$ and $\coker D_{u_0^r}$, resp.}
    \label{fig:generators of cokernels in upright torus}
\end{figure}
Consider the height function on the torus $T^2 \subset \R^3$, refer Figure~\ref{fig: height function on the torus}. We consider an embedding of $T^2$ that is symmetric about the reflection $(x,y,z) \mapsto (-x,y,z)$ across the $(y,z)$-plane. Then we have a maximum at $x_{-1} := (0,0,2)$, two critical points, $x_0 := (0,0,1)$ and $x_1 := (0,0,-1)$, of index $1$, and one minimum at $x_2 := (0,0,-2)$. Let $g$ denote the restriction of the standard Euclidean metric in $\R^3$ to $T^2$.\footnote{Later, we will make some modifications to the metric $g$ so that it is the standard Euclidean metric near a Morse chart. This is so that some of the technical estimates in the gluing analysis become easier; however, the discussion here remains unaffected: the modification can be made in such a way that the non-transversely cut-out gradient flowlines persist and have cokernels described in the same way. }

    Let us call $T^2 \cap \{x \geq 0\}$ the {\bf front side} of the torus, and $T^2 \cap \{x \leq 0\}$ the {\bf back side}. For the metric $g$ the negative gradient $-\nabla f$ has two flowlines from $x_{-1}$ to $x_0$. One of these lies entirely on the front side and one on the back. Let us denote them as $u_-^f$ and $u_-^b$, respectively. Similarly, there are two flowlines, $u_+^f$ and $u_+^b$, from $x_1$ to $x_2$. 

   There are two flowlines from $x_0$ to $x_1$, both lying on $\{x = 0\}$, and both with one-dimensional cokernels. Let us call them $u_0^l$ and $u_0^r$. 
 Let $p_r = u_0^r(0)$.
    The vector $(1,0,0) \in T_{p_r} M$ is a non-zero vector in 
    \begin{equation}
        v_r := (1,0,0) \in (T_{p_r}W^u(x_0) + T_{p_r} W^s(x_1))^\perp. 
   \end{equation}
    So, $\sigma_0^r$ defined by $\sigma_0^r(t) = \Psi_{(0, t)} (1,0,0)$ gives us a generator of $\coker D_{u_0^r}$. Note that for any point $p \in \im u_0^r$, $(1,0,0) \in (T_{p}W^u(x_0) + T_{p} W^s(x_1))^\perp$ and we could have chosen any of these as the ``initial value" for defining a nontrivial element of $\coker D_{u_0^r}$. One-dimensionality implies that this other element would be a positive multiple of $\sigma_0^r$. This means that the function given by $t \mapsto \langle \sigma_0^r(t), (1,0,0)\rangle$ is a non-vanishing function because of the uniqueness of the solution of a differential equation, and so, $t \mapsto \mathrm{sign} \langle \sigma_0^r(t), (1,0,0)\rangle$ is a constant function.

    For $u_0^l$, we do an analogous construction with $p_l = u_0^l(0)$ and $v_l = (1,0,0) \in T_{p_l} M$ to get $\sigma_0^l \in \coker D_{u_0^l}$.
\end{ex}

\begin{rem}
The above upright torus is an example where the stable and unstable manifolds intersect \emph{cleanly} instead of transversely. The computations of this paper concern gluing of flowlines in particular cases, but more generally, we expect these methods can be used to study gluing of flowlines in the case of cleanly intersecting stable/unstable submanifolds.
\end{rem}


\section{Asymptotic estimates}
In this section, we analyze the asymptotics of Morse flowlines and vector fields along Morse flowlines. These are used in multiple ways in the estimates for the gluing construction. In particular, we will use them crucially to show that the linearized obstruction section $\os_0$ is ``$C^1$-close'' to the obstruction section $\os$.

 We first begin by stating our assumptions on the Morse function and our metric.

Consider a pair $(f, g)$ of a Morse function $f: M \to \R$ on an $n$-dimensional smooth manifold and a metric $g$ on $M$.
For any critical point $x\in M$, fix Morse neighbourhood $U$ of $x$ with coordinates $(p_1, \dots, p_n)$. We identify the critical point $x$ itself with $(p_1,..,p_n)=(0,\dots,0).$ We assume the Morse function $f$ is given by 
\begin{align}
f(p_1, \dots, p_n) = f(x) - \frac{1}{2}\sum_{j=1}^{\ind(x)} \lambda^{(j)}  p_j^2 + \frac{1}{2}\sum_{j=\ind(x_0) + 1}^{n} \lambda^{(j)} p_{j}^2.
\end{align}

Here, the positive numbers $\lambda^j$ are the eigenvalues of the Hessian of $f$ at $x$.

At this point, we make one major assumption on the function metric pair $(f,g)$ that simplifies the analysis. This assumption will be used for the rest of the paper. We note this assumption does \underline{not} occur generically, but examples satisfying this assumption exist in great abundance.
\begin{assump}\label{assumption on setup}
 Assume that the metric $g$ is the standard Euclidean metric with respect to these coordinates within the Morse neighbourhoods around all critical points. This means that the exponential map with respect to this metric is simply vector addition within the Morse neighbourhoods. \footnote{This assumption is similar to the assumption of tame $J$ made in the paper \cite{Bao}. See also \cite{rooney, avdek2023}. The assumption simplifies the nonlinear equation to a linear one near the critical points/Reeb orbits to make certain parts of the obstruction bundle analysis easier.}
\end{assump}

From this, the gradient flow equation becomes linear near the critical points. In particular, let $u$ be a solution to 
    \begin{align}
        \frac{d}{ds}u + \nabla f(u) = 0.
    \end{align}
Assume for $s>s_0$, $u$ is near the critical point $u(\infty)=x$.
Then we can write
\begin{align}
 u =  \sum_{j=\ind(x)+1}^{n} v_j e^{-\lambda^{(j)} s}, \quad  s> s_0.
\end{align}
Here $v_j$ is an the eigenvector of $\textup{Hess}_x f$ with eigenvalue $\lambda^{(j)}$.

We also need exponential decay estimates for the kernel of the linearized operator and its adjoint.

\begin{prop}\label{Prop:decay_of_kernel}
Let $D_u$ denote the linearization of the gradient flow equation. Suppose $\psi \in \ker D_u$. Assume for $s>s_0$, $u(s)$ is contained in a Morse neighbourhood containing the critical point $x=u(\infty)$. Then for $s>s_0$ we can write
\begin{equation}\label{eqn:exponential decay of psi}
    \psi= \sum_{j=\ind(x)+1}^{n} v_j e^{-\lambda^{(j)} s}, \quad  s> s_0.
\end{equation}
Here $v_j$ an the eigenvector of $Hess_x f$ with eigenvalue $\lambda^{(j)}$.

If we fix our conventions to be $|\lambda^{ind(x)+1}|\leq |\lambda^{ind(x)+2}|\leq \dots |\lambda^{n}|$, then as a consequence of this, we have
\[
|\psi(s)|\leq |\psi(s_0)|e^{-|\lambda^{ind(x)+1}(s-s_0)|}
\]
A similar expression holds for $\psi$ near the negative end of $u$.
\end{prop}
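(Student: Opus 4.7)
\emph{Plan.} The plan is to use Assumption~\ref{assumption on setup} to reduce $D_u\psi=0$ on $(s_0,\infty)$ to an autonomous constant-coefficient linear system, and then to isolate the decaying modes using the $H^{1,2}$ regularity built into $\ker D_u$.

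First, I would note that in the Morse chart centred at $x=u(\infty)$ the coordinates trivialise $u^*TM$ as $\R^n$ and the Levi-Civita connection reduces to $\partial_s$. Because $f$ is a pure quadratic and $g$ is Euclidean throughout the chart, $\hess_f(u(s))$ equals, for every $s>s_0$, the constant diagonal matrix
\[
A=\mathrm{diag}\bigl(-\lambda^{(1)},\dots,-\lambda^{(\ind(x))},\,\lambda^{(\ind(x)+1)},\dots,\lambda^{(n)}\bigr).
\]
Hence on $(s_0,\infty)$ the equation $D_u\psi=\nabla_s\psi+(\hess_f\circ u)\psi=0$ becomes $\partial_s\psi+A\psi=0$. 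Next, writing $\psi(s)=\sum_j a_j(s)\,e_j$ in the coordinate eigenbasis decouples the ODE into
\[
a_j(s)=c_j\,e^{+\lambda^{(j)} s}\ \ (j\leq \ind(x)),\qquad a_j(s)=c_j\,e^{-\lambda^{(j)} s}\ \ (j>\ind(x)).
\]
The hypothesis $\psi\in\ker D_u\subset H^{1,2}$ forces $\psi\in L^2$ near $+\infty$, so the growing exponentials must be absent: every $c_j$ with $j\leq\ind(x)$ has to vanish. The surviving modes give exactly~\eqref{eqn:exponential decay of psi}, with each $v_j=c_j e_j$ automatically an eigenvector of $\hess_x f$ with eigenvalue $\lambda^{(j)}$.

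For the pointwise bound, I would apply a Pythagorean estimate in the coordinate basis:
\[
|\psi(s)|^2=\sum_{j>\ind(x)}|v_j|^2 e^{-2\lambda^{(j)} s}\leq e^{-2\lambda^{(\ind(x)+1)}(s-s_0)}\sum_{j>\ind(x)}|v_j|^2 e^{-2\lambda^{(j)} s_0}=e^{-2\lambda^{(\ind(x)+1)}(s-s_0)}|\psi(s_0)|^2,
\]
using the ordering $\lambda^{(\ind(x)+1)}\leq\lambda^{(\ind(x)+2)}\leq\cdots$ to pull out the slowest decay rate; taking square roots then yields the claimed inequality. The negative-end statement follows by the symmetric argument in the Morse chart at $u(-\infty)$, swapping the roles of stable and unstable directions. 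The only delicate step will be justifying the suppression of the growing modes, but I expect no serious obstacle here: it simply pairs the explicit ODE solution with the $H^{1,2}$-integrability that is already built into the definition of $\ker D_u$.
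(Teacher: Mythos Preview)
Your argument is correct and matches the paper's own proof: reduce $D_u$ to $\partial_s+A$ with constant $A$ in the Morse chart via Assumption~\ref{assumption on setup}, diagonalize, and eliminate the growing eigenmodes using the decay of $\psi$ at $+\infty$. You are simply more explicit than the paper, which does not spell out the $H^{1,2}$ reason for the decay or the Pythagorean estimate for the pointwise bound.
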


Equation~\ref{eqn:exponential decay of psi} is valid because, in the Morse neighbourhood, $D_u = \frac{d}{ds} + A$, where $A$ is the constant matrix that is given by the Hessian of $f$ at the critical point. This ODE can be essentially solved by a Fourier series; that is, the solution at any point $s$ is expressed as a linear combination of eigenvectors of the linear operator $A$. That this solution has the form given in Equation~\ref{eqn:exponential decay of psi} comes from the fact that the vector field decays to $0$ at $s=\infty$.

A similar expression holds for the cokernel of $D_u$.

\

\begin{prop}
Let $D_u^*$ denote the adjoint of the operator $D_u$ with respect to the ambient metric. Suppose $\sigma \in ker D_u^*$, for $s>s_0$, we have
\[
 \sigma =\sum_{j=1}^{\ind(x)} v_j e^{-\lambda^{(j)} s}
\]
Here $v_j$ an the eigenvector of $Hess_x f$ with eigenvalue $\lambda^{(j)}$.
Consequently, if we assume $|\lambda^{(1)}|\leq |\lambda^{(2)}|\leq\dots\leq |\lambda^{(ind x)}|$, then
\[
|\sigma(s)|\leq |\sigma(s_0)|e^{-|\lambda^{(1)}(s-s_0)|}
\]
for $s>s_0$.
\end{prop}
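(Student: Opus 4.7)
The plan is to repeat the strategy of Proposition \ref{Prop:decay_of_kernel}, applied to the adjoint equation $D_u^*\sigma = 0$. Under Assumption \ref{assumption on setup} the metric is Euclidean and $\hess_x f$ is a constant symmetric matrix inside the Morse chart around $x$, so the problem reduces to a constant-coefficient linear ODE which can be solved by diagonalization.

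First, we identify $D_u^*$ on the end $s > s_0$. There $D_u$ takes the form $\frac{d}{ds} + \hess_x f$. A one-line integration by parts against the Euclidean $L^2$ pairing, together with the symmetry of $\hess_x f$, gives $D_u^* = -\frac{d}{ds} + \hess_x f$; hence any $\sigma \in \ker D_u^*$ satisfies $\frac{d\sigma}{ds} = \hess_x f\cdot \sigma$ for $s > s_0$. Let $v_1,\dots,v_n$ be an orthonormal eigenbasis of $\hess_x f$ with eigenvalues $-\lambda^{(1)},\dots,-\lambda^{(\ind(x))},\lambda^{(\ind(x)+1)},\dots,\lambda^{(n)}$, the first $\ind(x)$ of which are negative. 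The general solution is
\[
\sigma(s) \;=\; \sum_{j=1}^{\ind(x)} a_j v_j e^{-\lambda^{(j)} s} \;+\; \sum_{j=\ind(x)+1}^{n} b_j v_j e^{+\lambda^{(j)} s}.
\]
The second sum grows as $s \to \infty$; since cokernel elements lie in $L^2(u^*TM)$ via the identification \eqref{eqn: identification of coker with perp}, each $b_j$ must vanish, producing the stated formula.

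For the pointwise bound we work in the orthonormal basis $\{v_j\}_{j=1}^{\ind(x)}$. Setting $c_j := a_j e^{-\lambda^{(j)} s_0}$ gives $\sigma(s_0) = \sum_j c_j v_j$ with $|\sigma(s_0)|^2 = \sum_j |c_j|^2$, and therefore $\sigma(s) = \sum_j c_j v_j e^{-\lambda^{(j)}(s-s_0)}$. The ordering $0 < \lambda^{(1)} \leq \cdots \leq \lambda^{(\ind(x))}$ together with $s - s_0 \geq 0$ gives
\[
|\sigma(s)|^2 \;=\; \sum_{j=1}^{\ind(x)} |c_j|^2 e^{-2\lambda^{(j)}(s-s_0)} \;\leq\; e^{-2\lambda^{(1)}(s-s_0)} |\sigma(s_0)|^2,
\]
and taking square roots yields the claim. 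The whole argument is essentially bookkeeping once Assumption \ref{assumption on setup} is in force; the only point that takes a moment of care is the elimination of the growing modes, which depends on interpreting $\ker D_u^*$ as elements of $L^2$ over the noncompact end.
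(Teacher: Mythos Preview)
Your proof is correct and follows the same approach as the paper, which merely records the one-line observation that $D_u^* = d/ds - A$ near the critical point. You have simply filled in the details: identifying the adjoint via integration by parts, writing the general solution of the constant-coefficient ODE, eliminating the growing modes via the $L^2$ constraint, and deducing the pointwise decay bound from the orthonormal eigenbasis.
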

To see this, we observe $D^*_u=d/ds-A$ near the critical point.
\begin{rem}
A slightly more complicated expression holds without the assumption that the metric is Euclidean near the critical points.
\end{rem}

\section{Obstruction bundle gluing without perturbation}\label{sec: obg 0-gluing}

In this section, we glue $3$-component broken flowlines where the central component's linearized operator has a one-dimensional cokernel. Namely, we consider broken flowlines of the type $(u_-, u_0, u_+)$ when the total Fredholm index is
\begin{align}
    \find D_{u_-} + \find D_{u_0} + \find D_{u_+} = 2.
\end{align}
Additionally, $\find D_{u_0} = 0$ and has a one-dimensional cokernel. We refer to this gluing informally as {\bf ``$0$-gluing"}. \footnote{The ``$0$'' is to emphasize we don't perturb the metric, to be contrasted with our later ``$t$''-gluing.}

Consider a pair $(f, g)$ of a Morse function $f: M \to \R$ on an $n$-dimensional smooth manifold and a metric $g$ on $M$.
Fix Morse neighbourhoods $U_-$ and $U_+$ of $x_0$ and $x_1$, respectively, such that the Morse function $f$ is given by 
\begin{align}
f(p_1, \dots, p_n) = f(x_0) - \frac{1}{2}\sum_{j=1}^{\ind(x_0)} \lambda_0^{(j)}  p_j^2 + \frac{1}{2}\sum_{j=\ind(x_0) + 1}^{n} \lambda_{0}^{(j)} p_{j}^2, \\
f(q_1, \dots, q_n) = f(x_1) -\frac{1}{2}\sum_{j=1}^{\ind(x_1)} \lambda_1^{(j)} q_j^2  + \frac{1}{2}\sum_{j=\ind(x_1) + 1}^{n} \lambda_0^{(j)} q_{j}^2 ,
\end{align}
for $(p_1, \dots, p_n)$ coordinates on $U_-$ and $(q_1, \dots, q_n)$ on $U_-$. We assume the critical point $x_0$ is identified with $(p_0,..,p_n)=(0,\dots,0)$ and $x_1$ is identified with $(q_1,\dots,q_n)=(0,\dots,0).$

At this point, we remind the reader of the standing assumption Assumption~\ref{assumption on setup} on the function metric pair $(f,g)$, which simplifies the analysis. This assumption does \underline{not} occur generically. 
We are now ready to state the setup of our main theorem.

For $x_{-1}, x_0, x_1, x_2 \in \crit(f)$ with 
$$\ind(x_{-1}) = k+1, \ind(x_0)= \ind(x_1)=k, \text{ and }  \ind(x_2) = k-1$$ 
let 
\begin{align}
u_- \in \ms(x_{-1}, x_0), u_0 \in \ms( x_0, x_1), u_+ \in \ms(x_1, x_2).
\end{align}

We assume 
\begin{align}
u_-(s) \in U_- \text{ for } s > 1,\quad
u_0(s) \in U_- \text{ for } s < -1,\\
u_0(s) \in U_+ \text{ for } s > 1,\quad
u_+(s) \in U_+ \text{ for } s < -1.
\end{align}

Let $\lambda_0^+$ be the smallest positive eigenvalue of the Hessian $\hess_{x_0} f$ and $\lambda_1^-$ be the largest negative eigenvalue (that is, the smallest absolute value) of $\hess_{x_1}f$.

We assume $\dim \coker D_{u_0} =1$, and fix a generator, $\sigma_0 \in \coker D_{u_0}$. We also assume 
there exists $0 \neq b_- \in T_{x_0}M$, $b_+ \neq 0 \in T_{x_1} M$, such that 
\begin{align}\label{eqn: expansion of sigma 0}
    \sigma_0 = \begin{cases}
         e^{\lambda_0^+ s}b_- + \sum_{\lambda_+,v_+} e^{\lambda_+ s} v_+ &s < -1,\\
         e^{\lambda_1^- s}b_+ + \sum_{\lambda_-,v_-} e^{\lambda_- s} v_- & s> 1,
    \end{cases}
\end{align}
where the summation over  $(\lambda_+,v_+)$ denotes any of the positive eigenvalues of $\hess_{x_0} f$ not equal to (hence greater than) $\lambda_0^+$, and $v_+ \in T_{x_0} M$ are the eigenvectors of $\hess_{x_0}f$ with eigenvalue $\lambda_+$. The summation over $(\lambda_-,v_-)$ is similarly defined.

Similarly, we may assume that
\begin{align}
 u_-& =   e^{-\lambda_0^+ s}a_- + \sum_{v_-} e^{-\lambda_+ s} v_- & s> 1,\\
         u_+ & = e^{-\lambda_1^- s}a_+   + \sum_{v_+} e^{-\lambda_- s} v_+ &s < -1,
     \end{align}
     for some nonzero eigenvectors $a_- \in T_{x_0} M$, $a_+ \in T_{x_1} M$ of the Hessians $\hess_{x_0}f$ and $\hess_{x_1}f$ with eigenvalues $\lambda_0^+$ and $\lambda_1^+$.
     The subsequent summation over
     $\lambda_\pm$ and $v_\pm$ similarly defined as in Equation~\ref{eqn: expansion of sigma 0}.
\begin{thm}\label{thm: gluing}
Suppose that
\begin{align}
    \langle a_-, b_- \rangle \neq 0, \quad \langle a_+, b_+ \rangle \neq 0.
\end{align}
Then, if 
\begin{align}\label{eqn: sign condition for gluing}
     \mathrm{sign} \langle a_-, b_- \rangle =  \mathrm{sign} \langle a_+, b_+\rangle,
\end{align}
there exists a unique one parameter family $\{(u_\nu)\}_{\nu \in \R_+} \subset \ms(x_{-1}, x_2)$ that converges to $(u_-, u_0, u_+)$ in the $C^\infty_\loc$-convergence. Otherwise, that is if,
\begin{align}
    \mathrm{sign} \langle a_-, b_- \rangle \neq  \mathrm{sign} \langle a_+, b_+\rangle,
\end{align}
no such family exists, that is, $(u_-, u_0, u_+)$ is not a limit point of flowlines. Refer Figure~\ref{fig: obg gluing}.
\begin{figure}[h]
    \centering
    \includegraphics[scale=0.5]{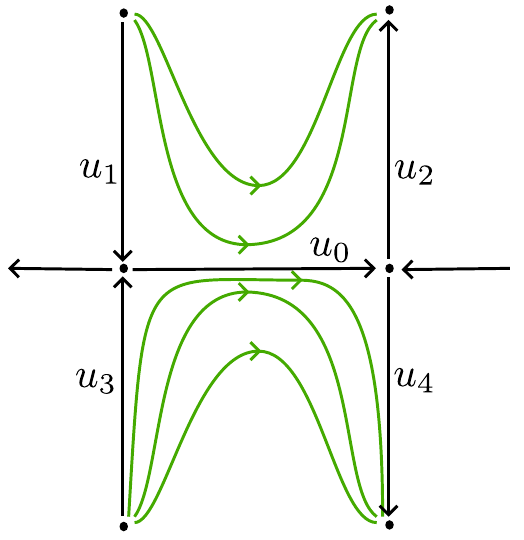}
    \caption{Three-component broken flowlines that can and cannot be glued. As drawn, $(u_1, u_0, u_2)$ and $(u_3, u_0, u_5)$ are $0$-gluable, and $(u_1, u_0, u_4)$ and $(u_3, u_0, u_2)$ are not $0$-gluable.}
    \label{fig: obg gluing}
\end{figure}
\end{thm}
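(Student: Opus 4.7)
The plan is to execute the eight-step OBG strategy outlined in Section 1.1. First, I would construct the \emph{pregluing} $u_0^{R}$ of $(u_-, u_0, u_+)$, parametrized by $R = (R_-, R_0^-, R_0^+, R_+)$, by shifting $u_-$ to the right and $u_+$ to the left, extending $u_0$ over a window of length $R_0^- + R_0^+$, and pasting the three pieces together with cutoff functions inside the Morse charts $U_\pm$. Assumption~\ref{assumption on setup} makes the pregluing particularly clean: in the Morse charts the exponential map is vector addition and the gradient-flow equation is linear with constant coefficients, so the pregluing error is supported in the two transition regions and is exponentially small in $R$. I would then deform $u_0^R$ by vector fields $\psi_-, \psi_0, \psi_+$ along the three segments, patched together by cutoffs, and split the nonlinear flow equation into three coupled pieces $\Theta_\pm(\psi_\pm,\psi_0)$ and $\Theta_0(\psi_-,\psi_0,\psi_+)$.

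Since $u_\pm$ are transverse, $D_\pm$ are surjective with bounded right inverses; a standard contraction argument in weighted Sobolev norms solves $\Theta_\pm = 0$ for $\psi_\pm = \psi_\pm(\psi_0)$, smoothly and uniformly small in $\psi_0$. For $\Theta_0$ I decompose into image and cokernel parts using the one-dimensional cokernel $\langle\sigma_0\rangle$: the image component $\Pi_{\mathrm{im}\, D_0}\Theta_0 = 0$ is solved by the same contraction argument using a right inverse onto $\mathrm{im}\, D_0$, giving $\psi_0 = \psi_0(R)$. Substituting everything back, the residual cokernel component defines the \emph{obstruction section}
\begin{align*}
\os(R_0^-, R_0^+) \;=\; \langle \Theta_0(\psi_-(R), \psi_0(R), \psi_+(R)),\; \sigma_0\rangle_{L^2},
\end{align*}
whose zeros correspond bijectively to glued flowlines near $(u_-, u_0, u_+)$.

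To count zeros I introduce the \emph{linearized} obstruction section $\os_0$, retaining only the leading contributions to $\os$. The dominant terms arise by pairing the pregluing error with $\sigma_0$ in the two transition regions and using the asymptotic expansions of Section~4 together with the hypothesized expansions of $u_\pm$ and $\sigma_0$: the transition near $x_0$ contributes a term proportional to $\langle a_-, b_-\rangle\, e^{-\lambda_0^+ R_0^-}$, and the transition near $x_1$ contributes a term proportional to $\langle a_+, b_+\rangle\, e^{-\lambda_1^- R_0^+}$, with a definite relative sign fixed by the pregluing conventions. Fixing $R_0 := R_0^- + R_0^+$ large and varying $R_0^-$, $\os_0$ is a strictly monotone difference of two sign-definite exponentials; it has a unique zero precisely when $\langle a_-, b_-\rangle$ and $\langle a_+, b_+\rangle$ have the same sign, and no zero otherwise. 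Varying $R_0 \in (R_*, \infty)$ then sweeps out the one-parameter family $\{u_\nu\}$.

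The main analytic obstacle is the $C^1$-closeness of $\os$ to $\os_0$, which is what transfers the zero-count from the explicit model to the true section. The difficulty, absent in \cite{Hutchings-Taubes_2009,Hutchings-Taubes_2007}, is that the central piece $u_0$ is not a trivial cylinder and contributes its own pregluing error, so one must show this error together with all implicit contributions through $\psi_\pm, \psi_0$ decay strictly faster than the two exponentials in $\os_0$. The plan is to (i) use Assumption~\ref{assumption on setup} to make the equations autonomous inside the Morse charts and thereby obtain uniform exponential decay estimates for $\psi_\pm$ and $\psi_0$ at the rates $\lambda_0^+, \lambda_1^-$; (ii) choose asymmetric gluing profiles $R_\pm$ as explicit multiples of $R_0^\pm$, with coefficients tuned to the relative sizes of the other Hessian eigenvalues, so that all subleading error terms decay strictly faster than the two leading exponentials; and (iii) differentiate the implicit fixed-point expressions carefully to upgrade $C^0$-closeness to $C^1$-closeness. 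This is the heart of the paper (Sections~7--8) and is where the hard analysis lives.
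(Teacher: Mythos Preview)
Your proposal is correct and follows essentially the same approach as the paper: pregluing with four parameters, splitting into $\Theta_\pm,\Theta_0$, contraction for $\psi_\pm(\psi_0)$ and for the image-part of $\Theta_0$, obstruction section $\os$ from the cokernel component, linearized section $\os_{00}$ given by the two exponentials with coefficients $\langle a_\pm,b_\pm\rangle$, and the $C^1$-closeness argument exploiting autonomy in the Morse charts plus the choice $R_\pm = R_0^\pm/A$. Two small points: the paper works in unweighted $W^{1,2}$ and extracts exponential decay directly from the autonomous equations rather than through weighted norms; and you should make explicit that the bijection between zeros of $\os$ and nearby flowlines requires a separate injectivity/surjectivity argument for the gluing map, which the paper carries out in its own section.
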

\begin{rem}
    The general form of the gradient flowline and the cokernel element follows from our assumptions on the metric. The real assumption we are making here is the nonzero pairing of the eigenvectors associated with the largest terms appearing in the asymptotic expansion of $u_i$ and $\sigma_0$. In some sense, that this pairing is nonzero is what happens ``generically''. If this pairing is zero, the analysis needs to be done more carefully. For an example of this, see \cite{rooney}. 
\end{rem}

Even though Theorem~\ref{thm: gluing} feels very abstract, it can be applied concretely, especially on surfaces.
We recommend that the reader have the following example in mind throughout the proof of Theorem~\ref{thm: gluing}.
\begin{ex}\label{ex: 0-gluing on upright torus}
Recall the setup in Example~\ref{ex: choosing cokernel elements}. As the ambient dimension is two, the cokernel elements take on a simple form as we saw in Example~\ref{ex: choosing cokernel elements}. Namely, there exist vectors $b_-^r , b_-^l \in T_{x_0} T^2$ and $b_+^r , b_+^l \in T_{x_1} T^2$ such that
\begin{align}
\sigma_0^l &=  e^{-\lambda_0^+ s}b_-^l \quad s < -1, &\sigma_0^l = e^{-\lambda_1^- s}b_+^l \quad s > 1,\\
\sigma_0^r &=  e^{-\lambda_0^+ s}b_-^r \quad s < -1, &\sigma_0^r =  e^{-\lambda_1^- s}b_+^r 
\quad s > 1.
\end{align}
The asymptotic vectors $b_\pm^r$ and $b_\pm^l$  have positive inner products \begin{align}\langle b_\pm^r, (1,0,0)\rangle > 0 \text{ and } \langle b_\pm^r, (1,0,0)\rangle > 0.
\end{align}
    This positivity implies that the sign conditions \ref{eqn: sign condition for gluing} can be reduced to determining whether a flowline is on the front or back side. Namely, 
    \begin{align}
        (u_-^f, u_0^r, u_+^f), (u_-^f, u_0^l, u_+^f), (u_-^b, u_0^r, u_+^b), \text{ and } (u_-^b, u_0^l, u_+^b)
    \end{align}
    are gluable, that is, there exists a unique one-dimensional family of flowlines in $\ms(x_{-1}, x_2)$ that limit to each of these. In contrast, the other combinations,
    \begin{align}
        (u_-^f, u_0^r, u_+^b), (u_-^f, u_0^l, u_+^b), (u_-^b, u_0^r, u_+^f), \text{ and } (u_-^b, u_0^l, u_+^f)
    \end{align}
    are not gluable.
\end{ex}

\begin{ex}\label{Kronheimer-Mrowka example}
    In \cite{Kronheimer-Mrowka_2007} Chapter 2, Kronheimer and Mrowka consider manifolds with boundary and Morse functions that have flowlines tangential to the boundary. They define two different complexes: $\hat C$ generated by interior critical points and critical points on the boundary where the normal to the boundary is an unstable direction for the Hessian, and $\check{C}$ generated by interior critical points and boundary critical points where the normal to the boundary is a stable direction. The differentials count appropriate, possibly broken, flowlines of total index $1$.

    \cite[Theorem 2.4.5]{Kronheimer-Mrowka_2007} states that these actually define homology groups. To prove this, one needs to show that the differentials square to zero, where we can apply Theorem~\ref{thm: gluing}.
    The geometry selects only the gluable flowlines in the manifold with the boundary case. Thus, Theorem~\ref{thm: gluing} implies the gluing counterpart of \cite[Lemma 2.4.3]{Kronheimer-Mrowka_2007}, that is, together they show the following: Suppose $a$ and $c$ are interior critical points with indices $k$ and $k-2$, respectively. Then, the boundary of the moduli space $\ms(a,c)$ consists of all two-component broken flowlines
    \begin{align}
        (u_1, u_2) \in \ms(a,b) \times \ms(b,c),
    \end{align}
    for $b$ interior critical point of index $k-1$ and all the three-component broken flowlines
    \begin{align}
        (u_1, u_2, u_3) \in \ms(a,b_1) \times \ms(b_1, b_2) \times \ms(b_2, c),
    \end{align}
    for boundary critical points $b_1$ and $b_2$ of index $k-1$. 

    As an example, consider the annulus in Figure~\ref{fig:KM example} with Morse function given by projection to the $y$-coordinate. Then the two flowlines along the inner boundary, namely $u_0^l$ and $u_0^r$, have non-trivial ($1$-dimensional) cokernels. We can identify these with the inner pointing normals $\nu^l$ and $\nu^r$ at arbitrary points $p_0^l$ and $p_0^r$, respectively. Then notice that all the $3$-component flowlines satisfy Equation~\ref{eqn: sign condition for gluing}.
\end{ex}

The rest of Section~\ref{sec: obg 0-gluing} is devoted to the proof of Theorem~\ref{thm: gluing}. 

\subsection{Pregluing} \label{subsec:preglue}
This subsection is the analogue of Section 5.2 in \cite{Hutchings-Taubes_2009}.
 Choose four gluing parameters, $R_-, R_+, R_0^-,$ and $R_0^+ > 0$. It will become clear later how these parameters are related. In fact, we can make $R_-$ and $R_+$ depend on $R_0^\pm$, but we keep them separate for now, as it makes the computations easier to understand. 

We now define the {\bf $\mathbf {(R_-, R_0^-, R_0^+,  R_+)}$-pregluing}, $u_{\#}: \R \to M$ for the parameters $(R_-, R_0^-, R_0^+,  R_+)$. Even though all the maps defined in this section depend on $(R_-, R_0^-, R_0^+,  R_+)$, we will not include $(R_-, R_0^-, R_0^+,  R_+)$ in the notation for ease of reading. Denote $R_0 = R_0^- + R_0^+$.
We first need three cutoff functions. 
\begin{figure}[h]
    \centering
    \includegraphics[width=\linewidth]{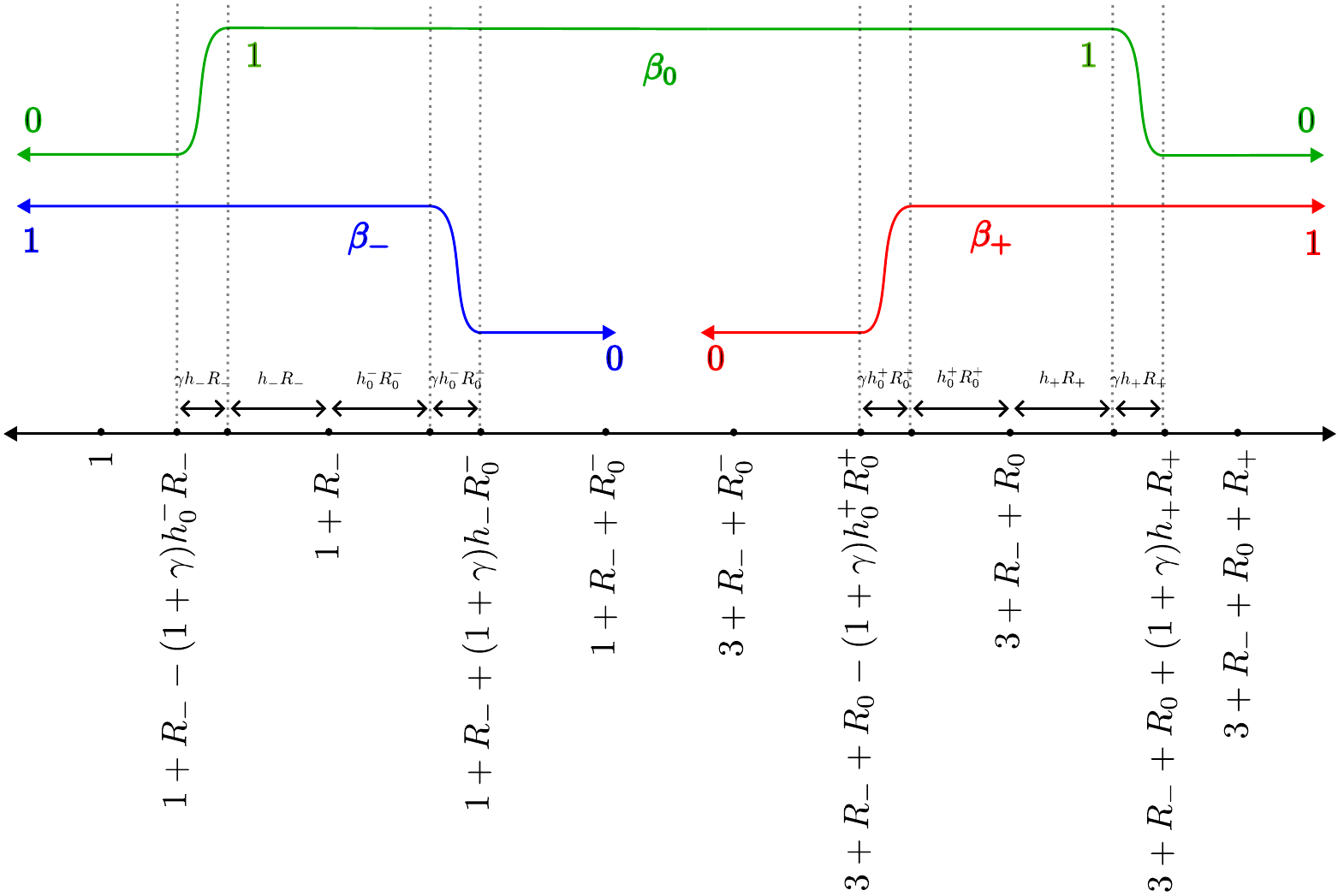}
    \caption{Cutoff functions for gluing parameters $(R_-, R_0^-, R_0^+, R_+)$}
    \label{fig: cutoff functions}
\end{figure}
\begin{defn}\label{defn: cutoff functions}
Fix a smooth function $\beta: \R \to [0,1]$ which is non-decreasing, equal to $0$ on $(-\infty, 0]$, and equal to $1$ on $[1, \infty)$. Fix $0< h< 1$ and $0< \gamma \ll 1$. It will become clear later that we need to pick $h > 1/2$, and that $h$ and $\gamma$ must satisfy conditions depending on the eigenvalues of the Hessians at $x_0$ and $x_1$.
Define three cutoff functions as follows, refer to Figure~\ref{fig: cutoff functions}.
\begin{align}
    \beta_-(s) & = \beta\left(\frac{-s + \left(1 + R_-  + h_0^-(1+\gamma)R_0^- \right)}{\gamma h_0^-R_0^-}\right),\\
    \beta_0(s) & = \begin{cases}
        \beta\left(\frac{s-\left(1+R_- - h_-(1+\gamma)R_-\right)}{\gamma h_- R_-}\right) & s < 2 + R_- + R_0^-,\\
        \beta\left(\frac{-s + \left(3 + R_- + R_0 + h_+(1 + \gamma)R_+\right)}{\gamma h_+ R_+}\right) &s \geq 2 + R_- + R_0^-
    \end{cases} , \text{ and }\\
     \beta_+(s) & = \beta\left( \frac{s - \left(3+ R_- + R_0 -h_0^+(1 + \gamma)R_0^+\right)}{\gamma h_0^+ R_0^+}\right).
\end{align} 
 The main point of note in the definition of these cutoff functions is the supports of $\beta_*$'s and the supports of their $s$-derivatives $\beta'_*$ that are as follows.
\begin{align}
    \supp \beta_- &= ( -\infty, 1 + R_- + h_0^-(1+\gamma)R_0^-], \\
    \supp \beta'_- & = [1 + R_- +h_0^-R_0^-, 1 + R_- + h_0^-(1 + \gamma) R_0^-],\\
    \supp \beta_0 &= [1 + R_- -h_-(1+\gamma)R_-, 3 + R_- +R_0 + h_+(1 + \gamma)R_+],\\ \quad \supp \beta'_0 &= [1 + R_- -h_-(1+\gamma)R_-, 1 + R_- -h_-R_-]\\
    & \quad \cup [3 + R_- +R_0 + h_+ R_+, 3 + R_- +R_0 + h_+(1 + \gamma)R_+] ,\\
    \supp \beta_+ &= [3+R_-+R_0 - (1+\gamma)h_0^+R_0^+, \infty),\\
    \supp \beta'_+ &= [3+R_-+R_0 - (1+\gamma)h_0^+R_0^+, 3+R_-+R_0 - h_0^+R_0^+].
\end{align}
\end{defn}
Next, we define the following translates of $u_0, u_+$. We will not translate $u_-$.
\begin{align}
u_0^{R_- + R_0^-} (s) &:= u_0\left(s  - \left( 2 + R_- + R_0^-\right)\right),\\ \quad u_+^{R_- + R_0 + R_+} &:= u_+\left(s -\left( 4 + R_- + R_0 + R_+\right)\right).
\end{align}
Then, we define the map $u_\# : \R \to M$ by
\begin{align}\label{eqn:definition of pregluing}
u_\# (s) = 
\beta_-(s) u_-(s) + \beta_0(s)u_0^{R_- + R_0^-}(s) + \beta_+(s) u_+^{R_- + R_0 + R_+} (s).
\end{align}
This definition makes sense because outside the intervals 
\begin{align}
\supp \beta_- \cap \supp \beta_0 
\text{ and } 
\supp \beta_0 \cap \supp \beta_+ 
\end{align}
only one out of $\beta_-, \beta_0$, and $\beta_+$ is non-zero. So, the right-hand side of Equation~\ref{eqn:definition of pregluing} is equal to 
\begin{align}
u_- &\quad\text{on}\quad (-\infty, 1 + R_- -h_-(1+\gamma)R_-],\\ 
u_0^{R_- + R_0^-} &\quad\text{on}\quad \left[1 + R_- + h_0^-(1+\gamma)R_0^-, 3+R_-+R_0 - (1+\gamma)h_0^+R_0^+\right], \\  
u_+^{R_- + R_0 + R_+} &\quad\text{on}\quad \left[1 + R_- -h_-(1+\gamma)R_-, \infty\right).\end{align}
On the interval $\supp \beta_- \cap \supp \beta_0$, $\beta_+$ vanishes. Additionally, $\supp \beta_- \cap \supp \beta_0$ gets mapped to $U_-$ by $u_-$ and $u_0^{R_- + R_0^-}$. So, we can add 
$$\beta_-(s) u_-(s) + \beta_0(s)u_0^{R_- + R_0^-}(s) \quad \text{ for } s \in \supp \beta_- \cap \supp \beta_0$$ 
using the identification of $U_-$ to the neighbourhood of $0$ in $\R^n$. Similarly, for $s \in \supp \beta_0 \cap \supp \beta_+$, $\beta_-$ vanishes and we can add 
\begin{align}
\beta_0(s) u_0^{R_- + R_0^-}(s) + \beta_+(s) u_+^{R_- + R_0 + R_+} (s)  \text{ for } s \in \supp \beta_0 \cap \supp \beta_+.
\end{align}

\begin{rem}
    We use the superscript $\tau$ to denote sections over the translated flowlines; refer Section~\ref{sec: shifting by global translation} for a relevant discussion. That is, we denote $\tu_0 := \Ruz$ and $\tu_+ := \Rup$, and similarly for other sections. In general, we use $\tu_*$ to mean an appropriately translated $u_*$ where the translation parameter is clear from the context. We will always have $u_-^\tau =u_-$, but sometimes we put an extra $\tau$ for convenience.
\end{rem}
\subsection{Deforming the pregluing}\label{sec: deformation of pregluing}
In this section, we define deformations of the pregluing $u_\#$. This section is analogous to Section~5.3 in \cite{Hutchings-Taubes_2009}. We will then search for solutions to the gradient flow equations among these deformations. 

To get the deformations of the flowlines, consider three pullback tangent bundles on $\R$, namely, 
\begin{align}
u_-^* (TM), u_0^* (TM), \text{ and } u_+^*(TM).
\end{align}
We can translate these bundles by translating the functions $u_-, u_0$, and $u_+$, and then glue the three together to make a single bundle $E$ on $\R$ over the preglued curve $u_\#$ given by
\begin{align}
u_-^*(TM) &\text{ for } s \in \left(-\infty, 1 + R_- \right],\\
 (u_0^{R_- + R_0^-})^*(TM) &\text{ for } s \in \left[1 + R_- , 3 + R_- + R_0^- + R_0^+ \right],\text{ and } \\ (u_+^{R_- + R_0 + R_+})^*(TM) &\text{ for } s \in \left[3 + R_- + R_0^- + R_0^+ , \infty\right).
\end{align}
This gives us a smooth bundle as, near $1 + R_- $ and $3 + R_- + R_0^- + R_0^+ $, the respective translated flowlines map to Morse neighbourhoods of $x_0$ and $x_1$, where the tangent bundle is identified with $\R^n$. So, the pull-back bundles can be identified in neighbourhoods of $1 + R_- $ and $3 + R_- + R_0^- + R_0^+$ in the domain. 

Now, pick $\psi_-$, $\tpsi_0$, and $\tpsi_+$ be sections of the bundles $u_-^*(TM), (u_0^{R_-+R_0^-})^*(TM)$, and $(u_+^{R_-+R_0 + R_+})^*(TM)$, respectively. The sections $\psi_-$, $\tpsi_0$ and $\tpsi_+$ give deformations of $u_-, \Ruz,$ and $\Rup$. 
Then, we can define a deformation of $u_\#$ by
\begin{align}\label{eqn:deformation of pregluing}
\R &\to M\\
s &\mapsto \exp_{u_\#(s)} (\beta_- \psi_- + \beta_0 \tpsi_0 + \beta_+ \tpsi_+)(s).
\end{align}
Note that we can formally write 
\begin{align}
    \exp_{u_\#} &(\beta_- \psi_- + \beta_0 \tpsi_0 + \beta_+ \tpsi_+) \\
    &= \beta_- \exp_{u_-} \psi_- + \beta_0 \exp_{\left(u_0^{R_- + R_0^-}\right)} \tpsi_0 + \beta_+ \exp_{\left(u_+^{R_- + R_0 + R_+}\right)} \tpsi_+.
\end{align}
Note that the addition in the above formula makes sense in a similar way to the addition in Definition~\ref{eqn:definition of pregluing}.

\subsection{Equation for the deformation to be a gradient flowline}\label{sec: equation for deformation to be a flowline}
Let us temporarily denote the vector field $X : = \nabla f$. Then, the Morse flow equation is given by
\begin{equation}
F = \frac{d}{ds} + X .
\end{equation}
We want to rewrite $F(\exp_{u_\#}\beta_-\psi + \beta_0 \tpsi_0 + \beta_+ \tpsi_+) = 0$ to have the form
\begin{equation}\label{eqn: deformation is gradient flow}
\beta_- \Theta_-(\psi_-, \tpsi_0) + \beta_0\Theta_0(\psi_-, \tpsi_0, \tpsi_+) + \beta_+ \Theta_+(\tpsi_0, \tpsi_+) =0
\end{equation}
for appropriate operators $\Theta_*$'s.

We fix some notation at this stage.
Denote the $s$-derivative of a function or a section $\alpha$ by $\alpha'$. Denote the Sobolev norm by $\| \cdot \|$ and the pointwise norm (of a vector) by $| \cdot |$. 

Let us expand $F(\exp_{u_\#} (\beta_- \psi_- + \beta_0 \tpsi_0 + \beta_+ \tpsi_+))$, for $\|\psi_+\|, \|\tpsi_-\|,$ and $\|\tpsi_0\| < \epsilon$ for a suitably small $\epsilon > 0$. We note we are implicitly using the fact that near each of the critical points we work in charts where the critical point is at the origin, and the metric is Euclidean.
\begin{align}
F&(\exp_{u_\#} (\beta_- \psi_- + \beta_0 \tpsi_0 + \beta_+ \tpsi_+)\\
&=  \beta_- u'_- + \beta_- \psi'_- + \beta'_- u_- + \beta'_- \psi_-  +\beta_0 (\tu_0)' + \beta_0 (\tpsi_0)' + \beta'_0 \tu_0+ \beta'_0 \tpsi_0  \\
& \quad +\beta_+ (\tu_+)' + \beta_+ (\tpsi_+)' + \beta'_+ \tu_+ + \beta'_+ \tpsi_+ \\
& \quad + X(\exp_{u_\#} (\beta_- \psi_- + \beta_0 \tpsi_0 + \beta_+ \tpsi_+)) \\
& = \beta_- u'_- + \beta_- \psi'_- + \beta'_- u_-+ \beta'_- \psi_- + \beta_0 (\tu_0)' + \beta_0 (\tpsi_0)' + \beta'_0 \tu_0+ \beta'_0 \tpsi_0 \\
& \quad + \beta_+ (\tu_+)' + \beta_+ (\tpsi_+)' + \beta'_+ \tu_+ + \beta'_+ \tpsi_+ \\
& \quad + \beta_-X(u_-) +\beta_- \partial_{u_-}X(u_-)(\psi_-) + \beta_- Q_-(\psi_-) \\
& \quad + \beta_0X(\tu_0) +\beta_0 \partial_{\tu_0}X(\tu_0)(\tpsi_0) + \beta_0 Q_0(\tpsi_0) \\
& \quad + \beta_+X(\tu_+) +\beta_+ \partial_{\tu_+}X(\tu_+)(\tpsi_+) + \beta_+ Q_+(\tpsi_+) \\
& = \beta_- \bigg( \psi'_- + \beta'_0 \tu_0 + \beta'_0 \tpsi_0 + \partial_{u_-}X(u_-)(\psi_-)  + Q_-(\psi_-) \bigg)\\
&\quad+ \beta_0 \bigg((\tpsi_0)' + \beta'_- u_- + \beta'_- \psi_- + \beta'_+ \tu_+ + \beta'_+ \tpsi_+ +\partial_{\tu_0}X(\tu_0)(\tpsi_0)  \\
& \quad \quad\quad\quad + Q_0(\tpsi_0) \bigg)\\
& \quad+\beta_+ \bigg( (\tpsi_+)' + \beta'_0 \tu_0 + \beta'_0 \tpsi_0 +\partial_{\tu_+}X(\tu_+)(\tpsi_+)   + Q_+(\tpsi_+) \bigg)
\end{align}
For the second equality, we use the Taylor expansion of $X$ about $u_\#$. The new functions $Q_*(\psi_*)$ depend on $u_*$ and satisfy the bounds
\begin{align}\label{eqn: form of Q}
|Q(\psi_*)| \leq C |\psi_*|^2, \quad \|Q(\psi_*)\| \leq \|\psi_*\|^2 \text{ for } \|\psi_*\| < \epsilon, 
\end{align}
for suitable small $\epsilon > 0$. To be more specific, we can write 
\[
Q(\psi_*) =q(\psi_*)q_2(\psi_*),
\]
where $q$ is a smooth function with uniformly bounded derivatives; $q_2$ is a smooth function with uniformly bounded derivatives that vanishes at $0$ and whose first derivative also vanishes at $0$.
For the last equality, we have used that the $u_*$'s are flowlines and therefore
\begin{align}
    u'_* + X(u_*) = 0.
\end{align}We have also used that
\begin{align}
\beta'_- = \beta_0 \beta'_-,\quad \beta'_0 = (\beta_- + \beta_+)\beta'_0, \quad\beta'_+ = \beta_0 \beta'_+.
\end{align}

Notice that the linearization of the gradient flow operator at $u_*$ given by
\begin{align}\label{eqn: linearization of gradient flowline}
    D_{u_*} \psi_* = \nabla_s\psi_*  + \nabla_{\psi_*} X(u_*),
\end{align}
appears within each of the coefficients of the $\beta_*$'s. So, using notation $D^\tau_* := D_{u^\tau_*}$, we can rewrite $F$ of the deformed pregluing as a ``linear" combination of the following operators.
\begin{align}
\Theta_-(\psi_-, \tpsi_0) & := D_-(\psi_-) + \beta'_0 \tu_0 + \beta'_0 \tpsi_0 + Q_-(\psi_-) , \label{eqn:theta minus}  \\
\Theta^\tau_0(\psi_-, \tpsi_0, \tpsi_+) & := D^\tau_0 \tpsi_0 + \beta'_- u_- + \beta'_- \psi_- \label{eqn:theta 0} \\
&\,\quad\quad\quad\quad + \beta'_+ \tu_+ + \beta'_+ \tpsi_+  + Q_0(\tpsi_0),\\
\Theta^\tau_+(\tpsi_+, \tpsi_0) & := D^\tau_+(\tpsi_+) + \beta'_0 (\tu_0 + \tpsi_0 ) + Q_+(\tpsi_+) .\label{eqn:theta plus}
\end{align}
 We now formulate the above computation as a Lemma.\footnote{See Section 5.4 of \cite{Hutchings-Taubes_2009}.}
\begin{lem}
 There exist functionals $\Theta^\tau_-, \Theta^\tau_0$, and $\Theta^\tau_+$, of the form \ref{eqn:theta minus}, \ref{eqn:theta 0}, and \ref{eqn:theta plus} respectively, such that the map \ref{eqn:deformation of pregluing} is a flowline for $\nabla f$ if and only if equation \ref{eqn: deformation is gradient flow} holds.
 \end{lem}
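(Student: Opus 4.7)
The plan is to verify the pointwise identity
\[
F\bigl(\exp_{u_\#}(\beta_- \psi_- + \beta_0 \tpsi_0 + \beta_+ \tpsi_+)\bigr) = \beta_- \Theta_-(\psi_-, \tpsi_0) + \beta_0 \Theta^\tau_0(\psi_-, \tpsi_0, \tpsi_+) + \beta_+ \Theta^\tau_+(\tpsi_+, \tpsi_0),
\]
with the $\Theta_\bullet$'s defined by formulas \ref{eqn:theta minus}--\ref{eqn:theta plus}. Once this identity is granted, the lemma is immediate: the map \ref{eqn:deformation of pregluing} is a flowline precisely when $F$ applied to it vanishes, and by the identity this is precisely equation \ref{eqn: deformation is gradient flow}. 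So both directions reduce to algebraic bookkeeping performed in the expansion computation that precedes the lemma statement.

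To obtain the identity, I would first rewrite the deformation as $\beta_- \exp_{u_-}\psi_- + \beta_0 \exp_{\tu_0}\tpsi_0 + \beta_+ \exp_{\tu_+}\tpsi_+$. This is legitimate because on every region where two of the $\beta_*$ are simultaneously nonzero both of the underlying flowlines lie in a common Morse chart, in which by Assumption \ref{assumption on setup} the exponential map is vector addition, so the various sections can be added in $\R^n$. Then I would apply $F = \frac{d}{ds} + X$, use the Leibniz rule on each summand $\beta_*(u_* + \psi_*)$, and Taylor-expand $X$ to second order about $u_*$; the remainders take the form $Q_*(\psi_*) = q(\psi_*) q_2(\psi_*)$ with $q_2(0) = 0$ and $(dq_2)(0) = 0$, giving the quadratic bounds \ref{eqn: form of Q}. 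The flowline equations $u_*' + X(u_*) = 0$ kill the ``constant'' terms $\beta_* u_*'$ and $\beta_* X(u_*)$ in every summand, and one recognizes $\beta_* \psi_*' + \beta_* \partial_{u_*} X(u_*)\psi_* = \beta_* D_{u_*}\psi_*$ from \ref{eqn: linearization of gradient flowline}.

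The one nontrivial step is reshuffling the ``error'' terms $\beta'_* u_*$ and $\beta'_* \psi_*$ coming from differentiating the cutoffs, which must end up inside a $\Theta_\bullet$ with $\bullet \neq *$. This is done using the support identities
\[
\beta'_- = \beta_0 \beta'_-, \qquad \beta'_0 = (\beta_- + \beta_+)\beta'_0, \qquad \beta'_+ = \beta_0 \beta'_+,
\]
which follow directly from Definition \ref{defn: cutoff functions}: $\beta_0 \equiv 1$ throughout $\supp \beta'_-$ and $\supp \beta'_+$, while $\supp \beta'_0$ splits into two intervals on which $\beta_-$ and $\beta_+$ respectively equal $1$. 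Inserting these identities places each $\beta'_*$ error term inside a coefficient $\beta_\bullet$ with $\bullet \neq *$, and regrouping by $\beta_-, \beta_0, \beta_+$ reproduces exactly the expressions \ref{eqn:theta minus}, \ref{eqn:theta 0}, \ref{eqn:theta plus}. The main potential obstacle is purely notational: keeping track of which of the three pullback bundles in $E$ the sections $\psi_-, \tpsi_0, \tpsi_+$ lie in. Assumption \ref{assumption on setup} makes the identifications on the overlap intervals canonical, so no further analytic input is required and the proof is complete.
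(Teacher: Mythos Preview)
Your proposal is correct and follows exactly the paper's approach: the lemma is simply a repackaging of the expansion computation immediately preceding it, carried out via the Leibniz rule, the Taylor expansion of $X$ about each $u_*$, the flowline equations $u_*' + X(u_*)=0$, and the support identities $\beta'_- = \beta_0\beta'_-$, $\beta'_0 = (\beta_-+\beta_+)\beta'_0$, $\beta'_+ = \beta_0\beta'_+$. The only point you might make slightly more explicit is that Assumption~\ref{assumption on setup} is used not just to identify the pullback bundles on the overlaps but also to make $X$ itself \emph{linear} there, which is what allows $X$ of the sum $\beta_-(u_-+\psi_-)+\beta_0(\tu_0+\tpsi_0)+\dots$ to split into the sum of $\beta_* X(u_*+\psi_*)$.
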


 Our strategy for solving equation~\ref{eqn: deformation is gradient flow} is to solve the three equations
 \begin{align}
 \Theta^\tau_-(\psi_-, \tpsi_0) = 0, \label{eqn:theta minus zero on interval}\\
 \Theta^\tau_+(\tpsi_+, \tpsi_0) = 0, \label{eqn:theta plus zero on interval}\\
 \Theta^\tau_0(\psi_-, \tpsi_0, \tpsi_+) = 0 ,\label{eqn:theta zero zero on interval}
 \end{align}
 iteratively.

 We carefully choose the spaces where the perturbations $\psi_*$'s can belong, avoiding the redundancy that comes from adding elements of the kernels and ensuring the injectivity of the gluing.
 Let $\cH^\tau_0$ denote the $L^2$-orthogonal complement of $\ker(D^\tau_0)$ in $W^{1,2}(u^{R_- *}_0TM)$, $\cH_-$ denote the orthogonal complement of $\ker(D_-)$ in $W^{1,2}(u_-^*TM)$, and $\cH^\tau_+$ denote the orthogonal complement of $\ker(D^\tau_+)$ in $W^{1,2}(u_+^{R_- + R_+*}TM)$. We will solve the above equations for $\tpsi_\pm \in \cH^\tau_\pm$ and $\tpsi_0 \in \cH_0$. To find solutions to all three Equations~\ref{eqn:theta minus zero on interval}, \ref{eqn:theta plus zero on interval}, and \ref{eqn:theta zero zero on interval}, simultaneously, we first solve Equations~\ref{eqn:theta minus zero on interval} and \ref{eqn:theta plus zero on interval} for a fixed $\psi_0^\tau$ to get $\psi_-$ and $\psi_+^\tau$, respectively, as functions of $\psi_0$. We then plug these results into equation~\ref{eqn:theta zero zero on interval} to view \ref{eqn:theta zero zero on interval} as an equation of $\psi_0^\tau$, and then solve for $\psi_0^\tau$.

 \subsection{Shifting by the global translation}\label{sec: shifting by global translation}
This subsection provides a brief digression to explain how to think about changing the pregluing parameters $(R_-, R_0^-, R_0^+, R_+)$. This will be most relevant for solving the middle equation $\Theta^\tau_0=0$ where we will study the obstruction bundle.

Recall we have chosen $\psi_-^\tau$, $\psi_0^\tau$, and $\psi_+^\tau$ be sections \footnote{We will sometimes abuse notation and write $\psi_-^\tau := \psi_-$.} of the bundles $u_-^*(TM)$, $(u_0^{R_-+R_0^-})^*(TM)$, and $(u_+^{R_-+R_++R_0})^* (TM)$, resp., and written $\Theta^\tau_\pm$ and $\Theta^\tau_0$ as equations for vector fields over the bundles $u_-^*(TM), (u_0^{R_-+R_0^-})^*TM$ and $(u_+^{R_-+R_++R_0})^*TM$. It is sometimes helpful to translate the vector fields $\psi_0^\tau$ and $\psi^\tau_+$ back to be sections of $(u_0)^*(TM)$ and $u_+^*TM$, respectively. We shall refer to the vector fields that we translated back as $\psi_0$ and $\psi_\pm$, respectively\footnote{As a sanity check, we have $\psi_0(s)=\psi_0^\tau(s+2+R_-+R_0^-)$}. Then we can rewrite the equations $\Theta^\tau_\pm$, $\Theta^\tau_0$ as equations over $u_0^*TM, u_\pm^*TM$. Namely, in the coordinates of $u_0^*TM, u_\pm^*TM$, they take the following forms. On the domain of $u_-$, with $s$ denoting the variable that parametrizes $u_-(s)$:
\begin{align}
\Theta_- & = D_-\psi_- +\beta_0' \psi_0(s-(2+R_- +R_0^-)) +\beta_0' u_0^{R_-+R_0^-}+ Q_-(\psi_-),
\end{align}
For the middle portion, if $s\in \mathbb{R}$ denotes the domain variable of $u_0(s)$:
\begin{align}
\Theta_0 &= D_0 \psi_0+ \beta_-'(s+R_0^-+R_-+2)(u_-(s+R_0^-+R_-+2) \\
& +\psi_-(s+R_0^-+R_-+2)) \\
&\quad + \beta_+'(s-(R_0^++R_+))(u_+(s-(R_0^++R^++2))+\psi_+(s-R_0^+-R^++2)) \\
&\quad + Q_0(\psi_0) ,
\end{align}
where the $s$ coordinate is on the domain of $u_0$. Lastly we have
\begin{align}
\Theta_+ &= D_+(\psi_+) + \beta'_0(s+R_0^++R_++2) (u_0(s+R_0^++R_++2) \\&\quad+ \psi_0(s+R_0^++R_++2) ) + Q_+(\psi_+) ,
\end{align}
for $s$ coordinate on the domain of $u_+$. Note that, in the above equations, all the $\beta_*$ have been translated. For brevity of notation, we will write $\beta_*^\tau$ for the translated cut-off functions.

From this viewpoint, when we vary the pregluing by varying the gluing parameters $R_-, R_0^-, R_0^+, R_+$, we are varying how the vector fields over \emph{the unchanging} domains are coupled via a system of PDEs. This will be particularly important when we try to understand how the obstruction section varies with varying pregluing parameters. 

 We note that the vector fields $\psi_*$ as well as the base curves $u_\pm$ have also been translated, depending on the pregluing parameters. For convenience, we may sometimes omit the translations from the vector fields and flowlines when they are not relevant. 
Hence, we will sometimes write the equations above as 
\begin{align} 
\Theta_-&=D_-\psi_- +\beta_0'(\psi_0+u_0) + Q_-(\psi_-)\\
\label{eqn:simplified_form} \Theta_0&=D_0\psi_0+\beta_-^{\tau\prime}(u_++\psi_+)+\beta_+^{\tau \prime}(u_-+\psi_-) + Q_0(\psi_0) \\
\Theta_- &=D_+\psi_- +\beta_0^{\tau \prime}(\psi_0+u_0)+ Q_+(\psi_+)
\end{align}
even though as it appears in the equation $\psi_*,u_*$ have been translated we omit that.
 
 \subsection{Solving for $\psi_-$ and $\psi_+$ in terms of $\psi_0$}\label{sec: solving for psi + and psi -}
 In this section, we do the first step in solving for the $\psi$'s. We fix the pregluing parameters. We fix a $\tpsi_0$ with $\|\tpsi_0\| < \epsilon$ for small enough $\epsilon$. It will become clear what the constraints on $\epsilon$ are. We will now solve for $\psi_\pm^\tau$ as functions of $\psi_0^\tau$.
 
\begin{rem}
For inequalities, we use the symbol ``$\lesssim$" to mean ``less than or equal to up to multiplication by some positive constants." We use the term constant to refer to any quantity that depends only on the fixed flowlines $u_\pm$ and $u_0$. We hope that this will make the exposition clearer.

 Additionally, whenever we say ``for $\epsilon >0$", we mean ``for an $\epsilon > 0$, sufficiently small." Usually, how small $\epsilon$ needs to be is contained in the proofs or computations of inequalities.
 
We will switch between $\psi_*^\tau$ and $\psi_*$, depending on which coordinates are easier. The reader is reminded of the dictionary between the two as explained in Section~\ref{sec: shifting by global translation}.
 \end{rem}
 Let $\cB_{\epsilon,*}$ for $* \in \{+,-, 0\}$ denote the $\epsilon$-ball\footnote{Here $\cH_*$ is simply the untranslated version of $\cH_*^\tau$.} in $\cH_*$ and $\cB_{\epsilon,*}^\tau$ the $\epsilon$-ball in $\cH^\tau_*$.
 \begin{prop}\label{prop: solving psi + and psi - as functions of psi 0}
 \footnote{Proposition~5.6 in \cite{Hutchings-Taubes_2009} is the analogous proposition.}
 For $\epsilon > 0$, and $R_-$, and $ R_+$ large enough, 
 the following hold:
 \begin{enumerate}
 \item Given any $\psi_0^\tau \in \cB_{\epsilon, 0}^\tau$, there exits vector fields $\tpsi_\pm \in \cB_{\epsilon, \pm}^\tau$ depending on $\psi_0^\tau$, such that $\psi_- = \psi_-(\psi_0^\tau)$ solves \ref{eqn:theta minus zero on interval} and $\tpsi_+ = \tpsi_+(\psi_0^\tau)$ solves \ref{eqn:theta zero zero on interval}.
Analogously given $\psi_0\in \mathcal{H}_0$, we get $\psi_\pm(\psi_0)$ solving the equations \ref{eqn:simplified_form}.
 \item We get bounds on the Sobolev norm of $\psi_\pm$
 \begin{align}\label{eqn: improved norm estimates on psi pm}
     \|\tpsi_\pm\| \lesssim R_\pm^{-1}\left(\|\tpsi_0\|_{\supp \beta'_0 \cap \supp \beta_\pm} + \|\tu_0\|_{\supp \beta'_0 \cap \supp \beta_\pm}\right).
 \end{align}
Here, by $\|\tu_0\|_{\supp \beta'_0 \cap \supp \beta_\pm}$, we mean the following: near $\supp \beta'_0 \cap \supp \beta_\pm$ we have chosen coordinate neighbourhoods for which the critical point is at the origin. We take the distance of $\tu_0(s)$ from the origin and measure it with respect to the $W^{1,2}$-norm in this coordinate neighbourhood.
 
 \item The derivative of $\tpsi_\pm$ at a point $\tpsi_0 \in \cB_\epsilon^\tau$ defines a bounded linear functional $\mathcal{D}_\pm^\tau: \cH_0^\tau \to \cH_\pm^\tau$ satisfying
 \begin{align}
 \|\mathcal{D}_\pm^\tau \eta\| \lesssim R_\pm^{-1} \|\eta\|.
 \end{align}
 \item 
 The untranslated solutions $\psi_\pm(\psi_0) \in \cH_\pm$ depend implicitly on the gluing parameters $\{R_-,R^-_0, R_+,R_0^+\}$. If we want to make this dependence explicit, then we should write $\psi_\pm(R_*,\psi_0)$.
 The derivative of $\psi_\pm(R_*,\psi_0)$ with respect to $R_*\in \{R_-,R^-_0, R_+,R_0^+\}$ 
 satisfy
 \[
 \left \| \frac{\partial\psi_\pm}{\partial R_*}\right\|\lesssim \frac{1}{R_\pm}\left(\|\tpsi_0\|_{\supp \beta'_0 \cap \supp \beta_\pm} + \|\tu_0\|_{\supp \beta'_0 \cap \supp \beta_\pm} \right).
 \]
 \end{enumerate}
 \end{prop}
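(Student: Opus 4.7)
The plan is to recast Equations \eqref{eqn:theta minus zero on interval} and \eqref{eqn:theta plus zero on interval} as fixed point problems on $\cB_{\epsilon,\pm}^\tau$ and apply Banach's contraction mapping principle. Since $u_\pm$ are transversely cut out, the restriction $D_\pm^\tau : \cH_\pm^\tau \to L^2$ is an isomorphism, and I would let $G_\pm^\tau$ denote its inverse, which is uniformly bounded independently of the gluing parameters (as $u_\pm$ are fixed and translation only shifts the domain). Rewriting $\Theta_\pm^\tau = 0$ as
\begin{align*}
\psi_\pm \;=\; T_\pm^\tau(\psi_\pm) \;:=\; -G_\pm^\tau\bigl(\beta_0'\,\tu_0 + \beta_0'\,\tpsi_0 + Q_\pm(\psi_\pm)\bigr),
\end{align*}
solutions in $\cH_\pm^\tau$ are precisely fixed points of $T_\pm^\tau$.

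The crux is the forcing estimate. From Definition \ref{defn: cutoff functions} one reads off the pointwise bound $|\beta_0'| \lesssim R_\pm^{-1}$ on $\supp \beta_0' \cap \supp \beta_\pm$, hence
\begin{align*}
\|\beta_0'\,\tu_0 + \beta_0'\,\tpsi_0\| \;\lesssim\; R_\pm^{-1}\bigl(\|\tu_0\|_{\supp\beta_0'\cap\supp\beta_\pm} + \|\tpsi_0\|_{\supp\beta_0'\cap\supp\beta_\pm}\bigr),
\end{align*}
which is $O(\epsilon R_\pm^{-1})$ for $\tpsi_0 \in \cB_{\epsilon,0}^\tau$. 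Combined with the factorization $Q_\pm = q \cdot q_2$ and the vanishing of $q_2$ and its first derivative at $0$ from \eqref{eqn: form of Q} (so that $\|Q_\pm(\psi)-Q_\pm(\psi')\| \lesssim \epsilon\,\|\psi-\psi'\|$ on $\cB_{\epsilon,\pm}^\tau$ via the Sobolev embedding $W^{1,2}(\R) \hookrightarrow C^0$), this shows $T_\pm^\tau$ is a self-map and a contraction of factor $<1/2$ once $\epsilon$ is small and $R_\pm$ is large. The unique fixed point gives part (1), and applying $G_\pm^\tau$ to the forcing estimate above (while absorbing the quadratic $Q_\pm$-term back into the left-hand side) yields part (2).

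For parts (3) and (4), I would apply the implicit function theorem to the smooth map $(R_*,\tpsi_0,\psi_\pm) \mapsto \Theta_\pm^\tau(\psi_\pm,\tpsi_0)$ at the constructed solution. The partial in $\psi_\pm$ is $D_\pm^\tau + dQ_\pm(\psi_\pm)$, invertible as a small perturbation of $D_\pm^\tau$. The partial in $\tpsi_0$ is multiplication by $\beta_0'$, which as an operator $\cH_0^\tau \to L^2$ has norm $\lesssim R_\pm^{-1}$ by the pointwise bound; composing with the inverse of the $\psi_\pm$-partial gives $\|\mathcal{D}_\pm^\tau \eta\| \lesssim R_\pm^{-1}\|\eta\|$, as claimed in (3). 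For (4), the partial in $R_*$ of the forcing is computed via the chain rule on $\beta_0'$ (yielding $|\partial_{R_*}\beta_0'| \lesssim R_\pm^{-2}$ on an interval of length $\lesssim R_\pm$) together with the translation derivatives of $\tu_0$ and $\tpsi_0$, which are $-u_0'$ and $-\psi_0'$ in the localized window and are bounded uniformly there. Composition with $G_\pm^\tau$ produces the stated $R_\pm^{-1}(\|\tu_0\|_{\mathrm{loc}} + \|\tpsi_0\|_{\mathrm{loc}})$ estimate.

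The main obstacle I anticipate is ensuring that every implicit constant is uniform in the gluing parameters as $R_*\to\infty$. Concretely, this means verifying that $\|G_\pm^\tau\|$ does not blow up (which is reasonable since $u_\pm$ are fixed and transversely cut out, but requires care with the translated domains), and that the ``localized'' $W^{1,2}$-norms on the right of \eqref{eqn: improved norm estimates on psi pm} transform consistently between the translated and untranslated pictures through the dictionary of Section \ref{sec: shifting by global translation}. Once these uniformities together with the key pointwise bound $|\beta_0'|\lesssim R_\pm^{-1}$ are in place, the remainder of the argument is standard contraction-mapping and implicit-function-theorem bookkeeping.
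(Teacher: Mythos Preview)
Your proposal is correct and follows essentially the same route as the paper: both recast $\Theta_\pm^\tau=0$ as a fixed point equation for $\psi_\pm \mapsto -D_\pm^{-1}(\beta_0'(\tu_0+\tpsi_0)+Q_\pm(\psi_\pm))$, use the key pointwise bound $|\beta_0'|\lesssim R_\pm^{-1}$ together with the quadratic smallness of $Q_\pm$ to obtain the contraction, and then differentiate the fixed point equation (you phrase this via the implicit function theorem, the paper does it directly) to get parts (3) and (4). The only substantive detail the paper makes more explicit in part (4) is that differentiating the translated $u_0$ and $\psi_0$ in $R_*$ produces $s$-derivatives, and one must invoke the exponential form of $u_0$ near the critical point (so $\|u_0'\|\sim\|u_0\|$ locally) together with the smoothing property of $D_\pm^{-1}$ to ensure $\partial\psi_\pm/\partial R_*$ lands back in $W^{1,2}$ with the claimed bound.
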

 \begin{proof}
 We prove the Proposition for $\tpsi_-$, a completely identical proof works for $\tpsi_+$.
 \begin{enumerate}
 \item 
  We expand $\Theta^\tau_-$ in \ref{eqn:theta minus zero on interval} as in \ref{eqn:theta minus} to get
 \begin{align}\label{eqn:theta minus on interval 2}
 D^\tau_- \tpsi_- + \beta'_0 (\tu_0 + \tpsi_0) + Q_-(\psi_-) = 0.
 \end{align}
 To solve \ref{eqn:theta minus on interval 2}, we will apply the contraction mapping theorem to an operator $\mathcal{I}_-$ defined as follows. Our assumption that $u_-$ is cut out transversely implies that there exists a bounded right inverse $D_-^{-1} : L^2(u_-^*TM) \to \cH_-$ of $D_-$. Consequently, for fixed $\tpsi_0$ satisfying $\|\tpsi_0\| < \epsilon$ for $\epsilon > 0$ small enough, the assignment
 \begin{align}
 \psi_- \mapsto \mathcal{I}_-(\psi_-) := - D_-^{-1} \left(Q_- (\psi_-) + \beta'_0(\tu_0 + \tpsi_0) \right)
 \end{align}
 defines a continuous map from the $\epsilon$-ball $\cB_\epsilon^-$ in $\cH_-$ to $\cH_-$.
 
 \noindent{\it Claim:} If $\epsilon > 0$ is sufficiently small and $R_-$ and $R_+$ used to define $\beta_0$ are sufficiently large, the map $\mathcal{I}_-$ sends $\cB_\epsilon^-$ to itself as a contraction mapping satisfying
 \begin{align}\label{eqn: contraction inequality of psi minus}
 \|\mathcal{I}_-(\psi^1_-) - \mathcal{I}_-(\psi^2_-)\| \leq \frac{1}{2} \|\psi^1_- - \psi^2_-\| \text{ for } \psi^1_-, \psi^2_- \in \cB_\epsilon^-.
 \end{align}
 {\it Proof of Claim:} Let $R : = \min \{R_-, R_+\}$. The definition of $\beta_0$ implies that there exists a constant $c_1 > 0$ such that $|\beta'_0| < c_1 R^{-1}$. This implies 

 \begin{align}
     \|\beta'_0(u_0 + \psi_0)\| \leq c_1 R^{-1}(\|u_0\|_{\supp\beta_0'\cap \supp \beta_-} + \|\psi_0\|_{\supp\beta_0'\cap \supp \beta_-}).
 \end{align}
 As $Q_-$ has the form \ref{eqn: form of Q}, there exists constant $c_2 > 0$ such that for $\|\psi_-\|, \|\psi_0\| < \epsilon$,  
 \begin{align}
 \|Q_-(\psi_-)\| \leq c_2\|\psi_-\|^2.
  \end{align}
 So, for $R$ large and $\|\psi_-\|, \|\psi_0\|<\epsilon$, $\mathcal{I}_-$ is continuous and maps $\cB_\epsilon^-$ to itself.
 
 
 To see the contraction property, expand $\mathcal{I}_-$ and use the linearity of $D_-^{-1}$ to get,
 \begin{align}
 \frac{\|\mathcal{I}_-(\psi^1_-) - \mathcal{I}_-(\psi^2_-)\|}{\|\psi^1_- - \psi^2_-\|} 
 & =  \frac{\|D^{-1}_-(Q_-(\psi^1_-)  - Q_-(\psi^2_-) \| }{\|\psi^1_- - \psi^2_-\|}\\
 & \leq  \|D^{-1}_-\| \frac{\|Q_-(\psi^1_-) - Q_-(\psi^2_-)\| }{\|\psi^1_- - \psi^2_-\|}.
  \end{align}
 Using the form of $Q$, we can show that the right-hand side is less than
 $$\|D_-^{-1}\|(\|\psi^1_-\| + \|\psi^2_-\|) < \frac{1}{2}, \text{ for } \|\psi^1_-\|, \|\psi^2_-\|<\epsilon.$$
 This concludes the proof of the claim. Part (1) of the proposition now follows from the contraction mapping theorem applied to $\mathcal{I}_-$ restricted to $B_\epsilon^-$.
 \item If $\psi_-$ is a fixed point of $\mathcal{I}_-$ as above, 
 \begin{align}
\|\psi_-\| \leq c\|\psi_-\|^2 + c\|\psi_-\|\|\psi_0\| + cR^{-1} \|u_0 + \psi_0\|_{\supp\beta_0'\cap \supp \beta_-}.
 \end{align}
 If we choose $\epsilon < {c^{-1}}/{4}$ such that $c\epsilon \|\psi_0\| < \epsilon/4$, then inputting $\|\psi_-\| < \epsilon$ into the first two terms of the above inequality and using triangle inequality on the second term gives
 \begin{align}
 \|\psi_-\|  \leq \frac{\|\psi_-\|}{2} + cR^{-1} \|u_0 \| + cR^{-1}\|\psi_0\|,
 \end{align}
 which implies the desired inequality.

 \item To get the bounds on the derivative of $\psi_-$ as a function of $\tpsi_0$, we regard $\mathcal{I}_- = -D_-^{-1} (Q_-(\psi_-) + \beta'_0(u_0 + \tpsi_0) )$ as a function of both $\tpsi_-$ and $\psi_0$. Let the differential of this function with respect to $\psi_-$ and $\tpsi_0$ be denoted by $\mathcal{D}_-^{\tau'}$ and $\mathcal{D}_0^{\tau'}$, respectively. Denote the derivative of $\psi_-$ with respect to $\tpsi_0$ by $\mathcal{D}_-^\tau$. Then differentiating $\psi_- - \mathcal{I}_-$ with respect to $\tpsi_0$, we get
 \begin{align}
  (1 - \mathcal{D}_-^{\tau'})\mathcal{D_-^\tau} = \mathcal{D}_0^{\tau'} \quad \text{ or }\quad 
 \mathcal{D}_-^\tau = (1 - \mathcal{D}_-^{\tau'})^{-1} (\mathcal{D}_0^{\tau'}).
 \end{align}
 By inequality~\ref{eqn: contraction inequality of psi minus}, $\mathcal{D}_-^{\tau'}$ has norm less than $1/2$. Additionally, for a fixed $\psi_-$, the partial derivative $\mathcal{D}_0^{\tau'}$ satisfies 
 $
 \|\mathcal{D}_0^{\tau'} \eta\| \lesssim R^{-1} \|\eta\| $.
 Putting these together completes the proof.
 \item 
 The existence of $\frac{\partial \psi_\pm (R_*,\psi_0)}{\partial R_*}$ as an $W^{1,2}$ vector field follows the same way as the previous step. To estimate its norm,
 we look at the equations $\Theta_\pm, \Theta_0$ as described in \ref{eqn:simplified_form}. We see that for fixed $\psi_0$, we are looking at the fixed point equations, 
 \[
 \psi_\pm = -(D_\pm)^{-1} \circ (\beta_0^{\tau \prime} (u_0+\psi_0) +Q_\pm),
 \]
 where we remind ourselves $\beta_*^\tau$ above has been translated by factors of $R_*$, the same is true for any occurances of $\psi_0$.
 Then we may differentiate both sides w.r.t. to $R_*$ to obtain
 \[
 \left \|\frac{d}{dR_*}\psi_\pm \right \| \leq \frac{C}{R_\pm}\left(\|\tpsi_0\|_{\supp \beta'_0 \cap \supp \beta_\pm} + \|\tu_0\|_{\supp \beta'_0 \cap \supp \beta_\pm} \right).
 \]
 The above follows from the following observations. First, since $u_0$ is translated by factors of $R_*$, when we differentiate, we see the derivatives of $u_0$; however, because of the form of exponential decay of $u_0$ near its ends, the $W^{1,2}$-norm of the derivative of $u_0$ is approximately the same size as that of $u_0$.
 
 Secondly, note that when we differentiate by $R_*$, we pick up an extra derivative of $\psi_0$ because it is translated, so $\psi_\pm'(s)$ is in $L^2$. Furthermore, we are applying $D_\pm^{-1}$, a smoothing operator of order $1$ to the derivative of $\psi_0$, so this ensures $\frac{d\psi_\pm}{dR_*}$ lands back in $W^{1,2}(u_\pm^*TM)$. The norm estimates follow from standard computations as above.

 \end{enumerate}
 \end{proof}
 
 \subsection{Solving for $\tpsi_0$}\label{sec: solving for psi 0}
 Our next step is to solve Equation~\ref{eqn:theta zero zero on interval} for $\tpsi_0$. Our naive hope would be to input the obtained $\tpsi_\pm$ from Proposition~\ref{prop: solving psi + and psi - as functions of psi 0} and then use the same method as we did for solving $\Theta^\tau_\pm = 0$ by constructing a contraction. Unfortunately, this does not work entirely as $D_0^\tau$ is not surjective. The goal of this section is to split the equation into two: first, one equation that is solvable by creating a contraction mapping and finding a fixed point, and second, an ``obstruction" to finding solutions to $\Theta^\tau_0 = 0$. 

 Let $R_+$, and $R_-$ be large as in Proposition \ref{prop: solving psi + and psi - as functions of psi 0}. We want to solve the Equation \ref{eqn:theta zero zero on interval} for $\tpsi_0$ after substituting the values of $\tpsi_\pm$ we obtained in Proposition~\ref{prop: solving psi + and psi - as functions of psi 0}. 
 Let us rewrite \ref{eqn:theta zero zero on interval} as
\begin{align}\label{eqn:solving theta}
     D^\tau_0 \tpsi_0 + F^\tau_0 \tpsi_0 = 0
 \end{align}
 where $F^\tau_0(\tpsi_0)$ denotes the sum of all terms other than $D^\tau_0 \tpsi_0$ on the right hand side of Equation \ref{eqn:theta 0}. Namely, 
\begin{align}\label{eqn:definition of F0}
     F^\tau_0 \tpsi_0 &= \beta'_- u_- + \beta'_- \psi_- + \beta'_+ \tu_+ + \beta'_+ \tpsi_+  + Q_0(\tpsi_0) ,
 \end{align}
 where we use Proposition~\ref{prop: solving psi + and psi - as functions of psi 0} to write $\tpsi_+$ and $\psi_-$ as functions of $\tpsi_0$.

We no longer have a right inverse for $D^\tau_0$, and therefore, cannot solve for $\tpsi_0$ in a manner identical to solving for $\tpsi_\pm$ in Proposition~\ref{prop: solving psi + and psi - as functions of psi 0}. To deal with this, we split $\Theta^\tau_0$ into two parts: its $L^2$ projection onto the image of $D^\tau_0$ and the rest. 
 Using the metric, we introduce a $L^2$-orthogonal projection $\Pi^\tau$ from $L^2((\tu_0)^* TM)$ onto $\ker (D^{\tau *}_0) (\cong \coker (D^\tau_0)$). We hence have a splitting $L^2((\tu_0)^*TM) = \im D^\tau_0 \oplus \coker D^\tau_0$. Then, solving Equation~\ref{eqn:solving theta} is equivalent to simultaenously solving
 \begin{align}
     D^\tau_0 \tpsi_0 + (1-\Pi^\tau) F^\tau_0(\tpsi_0) &= 0 \label{eqn:psi 0 CMT part}, \text{ and }\\
     \Pi^\tau F^\tau_0(\tpsi_0) &=0, \label{eqn:psi 0 cokernel part}
 \end{align}
as $D^\tau_0\tpsi_0$ lies in image of $D^\tau_0$ which is orthogonal to image of $\Pi^\tau$ by definition. 
This analysis holds analogously for the untranslated equation over $u_0$, i.e. with equation $\Theta_0$, linear operator $D_0$ and vector field $\psi_0$. So, we have
\begin{align}
     D_0 \psi_0 + (1-\Pi) F_0(\psi_0) &= 0 \label{eqn:psi 0 CMT part untranslated}, \text{ and }\\
     \Pi F_0(\psi_0) &=0. \label{eqn:psi 0 cokernel part untranslated}
 \end{align}

We first solve the first of these two equations in a manner similar to solving $\Theta^\tau_\pm$ for $\tpsi_\pm$ as $D^\tau_0$ is surjective onto its image.
\begin{prop}\label{prop:solving for theta}
\footnote{This is analogous to Proposition~5.7 in \cite{Hutchings-Taubes_2009}}
    The following are true for $\epsilon>0$ small enough and $R_+, R_-, R_0^\pm$ large enough.
\begin{enumerate}
        \item There exists a unique $\tpsi_0 \in \cB_\epsilon^\tau$, the $\epsilon$-ball in $\cH_0^\tau$, satisfying Equation~\ref{eqn:psi 0 CMT part}.
        
        \item This $\tpsi_0$ satisfies the following bound for $\tpsi_\pm$ obtained as in Proposition~\ref{prop: solving psi + and psi - as functions of psi 0}
\begin{align}\label{ineq: improved norm estimates for psi_0}
        \|\tpsi_0\| & \lesssim (R_0^-)^{-1}(\|u_-\|_{\supp \beta'_- \cap \supp \beta_0} + \|\psi_-\|_{\supp \beta'_-\cap \supp \beta_0})   \\
        & \quad + (R_0^+)^{-1}(\|\tu_+\|_{\supp \beta'_+ \cap \supp \beta_0} + \|\tpsi_+\|_{\supp \beta'_+ \cap \supp \beta_0})  .
        \end{align}
            \item This $\tpsi_0$ defines a smooth section of $(\tu_0)^* TM$. Additionally, $\tpsi_\pm(\tpsi_0)$ from Proposition~\ref{prop: solving psi + and psi - as functions of psi 0} for this $\psi_0$ are also smooth sections of $(\tu_\pm)^* TM$.
            \item The vector field $\psi_0$, which is a translation of $\tpsi_0$ so that it is a vector field over the gradient flowline $u_0$, depends implicitly on the gluing parameters $(R_-,R_0^-,R_0^+, R_+)$. This dependence is smooth. Additionally, $\psi_\pm(R_*,\psi_0)$, which are translates of $\tpsi_\pm(\tpsi_0)$ in (3), also depend smoothly on $R_* \in \{R_-,R_0^-,R_0^+, R_+\}$.
            
            For $R_* \in \{R_-,R_0^-,R_0^+, R_+\}$, we have the norm estimate
            \begin{align}
            \left \| \frac{d\psi_0}{dR_*}\right \| &\lesssim (R_0^-)^{-1}\bigg(\|u_-\|_{\supp \beta'_-\cap \supp \beta_0} + \|\psi_-\|_{\supp \beta'_-\cap \supp \beta_0} \\
            &\quad\quad\quad\quad\quad\quad\quad\quad\quad\quad\quad\quad+ \left \|\frac{\partial \psi_-}{\partial R_*} \right\|_{\supp \beta'_-\cap \supp \beta_0}\bigg) \\
            & + (R_0^+)^{-1}\bigg(\|u_+^\tau\|_{\supp \beta'_+ \cap \supp \beta_0} + \|\tpsi_+\|_{\supp \beta'_+ \cap \supp \beta_0}\\
            & \quad\quad\quad\quad\quad\quad\quad\quad\quad\quad\quad\quad\quad + \left \|\frac{\partial \psi_+^\tau}{\partial R_*} \right\|_{\supp \beta'_+ \cap \supp \beta_0}\bigg).
            \end{align}
    \end{enumerate}
\end{prop}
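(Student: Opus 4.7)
The plan is to mirror the strategy of Proposition~\ref{prop: solving psi + and psi - as functions of psi 0}, replacing the (non-invertible) $D_0^\tau$ with its right inverse on $\im D_0^\tau$, which is well-defined thanks to the splitting via $\Pi^\tau$. Since $D_0^\tau$ restricted to $\cH_0^\tau$ is injective with image $\im D_0^\tau$, it admits a bounded right inverse $(D_0^\tau)^{-1}:\im D_0^\tau \to \cH_0^\tau$. Using \ref{eqn:definition of F0} and substituting $\tpsi_\pm=\tpsi_\pm(\tpsi_0)$ from Proposition~\ref{prop: solving psi + and psi - as functions of psi 0}, I would define
\begin{align*}
\cI_0(\tpsi_0) := -(D_0^\tau)^{-1}(1-\Pi^\tau)F_0^\tau(\tpsi_0),
\end{align*}
so that Equation~\ref{eqn:psi 0 CMT part} becomes the fixed-point equation $\tpsi_0=\cI_0(\tpsi_0)$. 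To show $\cI_0$ is a contraction on $\cB_{\epsilon,0}^\tau$, I would bound each term in $F_0^\tau$: the linear pieces $\beta'_\pm u_\pm^\tau$ contribute at most $\lesssim (R_0^\pm)^{-1}\|u_\pm^\tau\|_{\supp\beta'_\pm \cap \supp \beta_0}$ because $|\beta'_\pm|\lesssim (R_0^\pm)^{-1}$; the terms $\beta'_\pm\psi_\pm^\tau$ are controlled using \ref{eqn: improved norm estimates on psi pm}; and $Q_0(\tpsi_0)$ is bounded by $\|\tpsi_0\|^2$ via \ref{eqn: form of Q}. For the Lipschitz estimate, the quadratic piece is controlled just as in \ref{eqn: contraction inequality of psi minus}, while the coupling $\psi_0\mapsto \psi_\pm(\psi_0)$ contributes at most $\lesssim R_\pm^{-1}\|\tpsi_0^1 - \tpsi_0^2\|$ by Proposition~\ref{prop: solving psi + and psi - as functions of psi 0}(3). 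Taking $\epsilon$ small and $R_\pm, R_0^\pm$ large makes $\cI_0$ a $\tfrac{1}{2}$-contraction on $\cB_{\epsilon,0}^\tau$, and the contraction mapping theorem produces the unique $\tpsi_0$ of part (1). The norm estimate \ref{ineq: improved norm estimates for psi_0} of part (2) follows by evaluating the fixed-point equation at this $\tpsi_0$ and absorbing the quadratic $Q_0$ contribution into the left-hand side for $\epsilon$ small.

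For part (3), smoothness is obtained by bootstrapping: $D_0^\tau \tpsi_0 = -(1-\Pi^\tau)F_0^\tau(\tpsi_0)$ is a first-order linear ODE in $s$ whose coefficients and inhomogeneous data are smooth in $s$, so once $\tpsi_0 \in W^{1,2}$ is known, the right-hand side lies in $W^{1,2}$ as well and ODE regularity promotes $\tpsi_0$ to $W^{2,2}$; iterating gives $C^\infty$. Feeding the smooth $\tpsi_0$ back into Equations~\ref{eqn:theta minus zero on interval} and \ref{eqn:theta plus zero on interval} and repeating the argument upgrades $\tpsi_\pm$ to smooth sections of $(\tu_\pm)^*TM$.

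For part (4), the smooth dependence of the untranslated fixed point $\psi_0$ on $(R_-,R_0^-,R_0^+,R_+)$ follows from the implicit function theorem applied to Equation~\ref{eqn:psi 0 CMT part untranslated}, using that $I - D_{\psi_0}\cI_0$ is invertible with operator norm $\leq 1/2$. To extract the quantitative bound on $\partial \psi_0/\partial R_*$, I would differentiate the untranslated fixed-point equation
\begin{align*}
\psi_0 = -D_0^{-1}(1-\Pi)\left(\beta_-^{\tau\prime}(u_-+\psi_-) + \beta_+^{\tau\prime}(\tu_+ + \tpsi_+) + Q_0(\psi_0)\right)
\end{align*}
with respect to $R_*$ and solve. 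Differentiating the translated cutoffs produces factors of $(R_0^\pm)^{-1}$; differentiating the translated $u_\pm$ or $\psi_\pm$ introduces an extra $s$-derivative, but the exponential decay of $u_\pm$ near the critical points (by the discussion around Equation~\ref{eqn:exponential decay of psi}) and the analogous decay for $\psi_\pm$ imply that the $W^{1,2}$-norm of the $s$-derivative is comparable to that of the function itself on the relevant intervals; the remaining term $\partial \psi_\pm/\partial R_*$ is controlled by Proposition~\ref{prop: solving psi + and psi - as functions of psi 0}(4). Applying $D_0^{-1}$, a smoothing of order one, returns a $W^{1,2}$ bound matching the claimed estimate. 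The main obstacle I anticipate is the bookkeeping across the four scales $R_-, R_0^-, R_0^+, R_+$ together with the self-referential coupling $\psi_0 \leftrightarrow \psi_\pm$: a careless estimate of the composed derivative could destroy the $R_\pm^{-1}$ smallness that both the contraction argument and the final norm bounds rely upon. This is tamed by the factors of $R_\pm^{-1}$ recorded in Proposition~\ref{prop: solving psi + and psi - as functions of psi 0}(3)–(4), which are arbitrarily small for $R_\pm$ large.
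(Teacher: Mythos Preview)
Your proposal is correct and follows essentially the same approach as the paper: the same contraction map $\cI_0 = -(D_0^\tau)^{-1}(1-\Pi^\tau)F_0^\tau$, the same bootstrapping for regularity in part~(3), and the same differentiation of the untranslated fixed-point equation for part~(4), including the observation that the chain-rule term $\mathcal{D}_\pm\,d\psi_0/dR_*$ carries the extra factor $R_\pm^{-1}$ that allows it to be absorbed. Your write-up is in places a bit more explicit than the paper's (e.g.\ invoking the implicit function theorem for smooth dependence), but the argument is the same.
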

\begin{proof}

\begin{enumerate}
    \item We apply the contraction mapping theorem to \begin{align}
            \mathcal{I}_0(\psi_0) := -(D^\tau_0)^{-1} (1 - \Pi^\tau) F^\tau_0(\psi^\tau_0),
        \end{align}
        where $(D^\tau_0)^{-1}$ denotes a right inverse of $D^\tau_0: \cH_0^\tau \to \im(D^\tau_0)$.

        It follows from the estimates established in Proposition \ref{prop: solving psi + and psi - as functions of psi 0} that if  $\epsilon > 0$ is sufficiently small and $\|\psi_-\|, \|\psi_+\| < \epsilon$, and $R_\pm$ are sufficiently large, then $\cI_0$ maps $\cB_\epsilon^\tau$  to itself.

        For distinct elements $\psi^{\tau, 1}_0$ and $\psi^{\tau, 2}_0$ of the $\epsilon$-ball of $\cH_0^\tau$, using Proposition~\ref{prop: solving psi + and psi - as functions of psi 0}(c), (d), and assuming $\epsilon$ sufficiently small, 
        \begin{align}
            \frac{\|\cI (\psi^{\tau,1}_0) - \cI(\psi^{\tau,2}_0)\|}{\|\psi^{\tau,1}_0 - \psi^{\tau,2}_0\|} \lesssim C\epsilon + R_+^{-1} + R_-^{-1}.
        \end{align}
        So, $\cI_0$ is a contraction mapping on $\cB_0^\tau$ provided $\epsilon > 0$ is sufficiently small and $R_\pm$ are sufficiently large. Then $\cI_0$ has a unique fixed point in $\cB_\epsilon^\tau$, which by definition will satisfy \ref{eqn:psi 0 CMT part}. This concludes the proof of part (1).
\item Part (2) follows from the above provided $\epsilon$ is sufficiently small and $r$ is sufficiently large.
\item  We show for fixed $\{R_+,R_-,R_0^-,R_0^+\}$, the functions $\tpsi_0, \tpsi_\pm$ are smooth as functions of $s$. It follows from bootstrapping using the specific forms of $\Theta^\tau_+$, $\Theta^\tau_-$, and $\Theta^\tau_0$. 
For example, consider $\Theta^\tau_0$, which gives us the equation
\begin{align}
D^\tau\tpsi_0 = -\left (\beta'_- u_- + \beta'_- \psi_- + \beta'_+ \tu_+ + \beta'_+ \tpsi_+  + Q_0(\tpsi_0)\right ).
\end{align}
The left-hand side takes the form $(\frac{d}{ds} +A(s))\tpsi_0$, the right-hand side has no $s$ derivatives on the vector fields $\tpsi_0,\tpsi_\pm$. This means $\tpsi_0$ is in $W^{2,2}(u_0^{\tau*}TM)$. Looking at the equations $\Theta_\pm=0$ will give us $\psi_\pm \in W^{2,2}(u_\pm^*TM)$. Repeating this process gives us that they are smooth.
\item The bounds in Part (4) follow in the same way as Part (4) of Proposition~\ref{prop: solving psi + and psi - as functions of psi 0}.
We now show that the derivatives of $\psi_0, \psi_\pm$ are smooth with respect to $R_*$ (note that to consider derivatives, we are examining the untranslated vector fields). We look at 
\[
\frac{d\psi_0}{dR_*} =-D_0^{-1} \circ \frac{d}{dR_*} \left (\beta_+^{\tau \prime}(\psi_++u_+)+\beta_-^{\tau \prime}(\psi_-+u_-)+ Q_0(\psi_0) \right )
\]
in our abbreviated notation.
We need to make the dependence of $\psi_\pm(R_*,\psi_0(R_*))$ on $R_*$ on the right-hand side of the equation more precise. After taking the $R_*$-derivative, the right-hand side of the equation consists of the application of $D_0^{-1}$ to
\begin{itemize}
\item $\beta_\pm^{\tau \prime \prime}(u_\pm)$ and $\beta_\pm^{\tau \prime } \frac{d}{dR_*}(u_\pm)$ (note that we've left implicit that $u_\pm$ has also been translated by $R_*$);
\item \[\frac{d\psi_\pm(R_*,\psi_0(R_*))}{dR_*} = \frac{\partial \psi_\pm}{\partial R_*} + \mathcal{D}_\pm \frac{d\psi_0}{dR_*}\];
\item From differentiating $Q(\psi_0)$, terms of the form \[ \psi_0 \frac{d\psi_0}{dR_*}\].
\end{itemize}
All of these are shown to be in $W^{1,2}$ in Proposition~\ref{prop: solving psi + and psi - as functions of psi 0}. Since $D_0^{-1}$ is a smoothing operator, this shows that $\frac{d\psi_0}{dR_*}$ is in $W^{2,2}$. Iterating this process to $R_*$ derivatives of $\psi_\pm$ by looking at the equations $\Theta_\pm$ shows that the $R_*$ derivatives of $\psi_0,\psi_\pm$ are smooth. 

The Sobolev norm bound follows by inspecting the right-hand side.
This concludes the proof.
        
        \end{enumerate}
\end{proof}

\subsection{The obstruction section and the gluing map}\label{sec: obstruction section and gluing map}
 In this section, we define the ``obstruction section," which is essentially the projection of $\Theta_0$ to the cokernel of $D_0$. 
 By inputting the $\psi_\pm$ and $\psi_0$ obtained uniquely in Propositions~\ref{prop: solving psi + and psi - as functions of psi 0} and \ref{prop:solving for theta}, we can view the obstruction section as a function of only the gluing parameters. Then, we show that for large enough gluing parameters satisfying some relations, we will always have a unique solution if (and only if) the signs on the asymptotics are as in Theorem~\ref{thm: gluing}, thus giving us a $1$-dimensional ``parametrization space", namely $\os^{-1}(0)$. We will then define the ``gluing map", namely $G$ in Definition~\ref{defn: gluing map}, on this space. 
The gluing map will give us a parametrization of a $1$-parametric family of ``glued" flowlines limiting to our chosen $(u_-, u_0, u_+)$. 

Note there are redundancies in the pregluing parameters\linebreak $(R_-,R_0^-,R_0^+,R_+)$. To define the obstruction bundle, we eliminate this redundancy. In particular, we eliminate $R_-$ and $R_+$ and keep $R_0^\pm$ as independent variables. To be specific, we choose a sufficiently large integer $A$ (how large it needs to be will be specified in the analysis of the obstruction section) and set
\[
R_\pm = R_0^\pm/A.
\]
From this point onwards, specifying only the parameters $(R_0^-,R_0^+)$ determines a pregluing with $R_\pm$ as above.

Let $r$ always denote a real number greater than the minimum value $R_\pm$ can take as per Propositions~\ref{prop: solving psi + and psi - as functions of psi 0} and \ref{prop:solving for theta}. Let $\ob \to [r, \infty)^2$, referred to as the {\bf obstruction bundle}, denote the trivial bundle where the fiber over any $(R_0^-, R_0^+) \in [r, \infty)^2$ is 
\begin{align}
    \ob_{(R_0^-, R_0^+)} = \hom (\coker(D_{u_0}), \R).
\end{align}

We are now ready to define the obstruction section, a different way of looking at Equation~\ref{eqn:psi 0 cokernel part}, whose zero set will be the space parametrizing the ``gluing." We now begin working with the untranslated bundles $u_0^*(TM)$ and $u_\pm^*(TM)$. Recall that, when we refer to $\psi_*$ obtained from Proposition~\ref{prop: solving psi + and psi - as functions of psi 0} or \ref{prop:solving for theta} without the superscript $\tau$, we are referring to sections of $u_0^*(TM)$ and $u_\pm^*(TM)$ that correspond to $\tpsi_*$'s as described in Section~\ref{sec: shifting by global translation}.

\begin{defn}[Definition 5.9 \cite{Hutchings-Taubes_2009}]\label{defn: obstruction section}
Define a section $\os: [r, \infty)^2 \to \ob$, called the {\bf obstruction section}, as  
    \begin{align}\label{eqn: obstruction section inner product}
        \os(R_0^-, R_0^+)(\sigma) := \langle \sigma, F_0(\psi_0(R_-, R_0^-, R_0^+, R_+))\rangle \text{ for } \sigma \in \coker(D_{u_0}).
    \end{align}
    Here, as earlier, $F_0(\psi_0)$ appears as in Equation~\ref{eqn:psi 0 CMT part untranslated} and 
    $\psi_0(R_-, R_0^-, R_0^+, R_+)$
    is the solution to Equation~\ref{eqn:psi 0 CMT part untranslated} we found in Proposition~\ref{prop:solving for theta}.
\end{defn}
We will use $\os$ to define a parametrizing space for the flowlines limiting to the broken flowline $(u_-, u_0, u_+)$. To do this, we need $\os$ to be a smooth section and to intersect the $0$-section transversely.

\begin{prop}\label{prop:smootheness of obstruction section}
    The section $\os:[r, \infty)^2  \to \ob$ is smooth.
\end{prop}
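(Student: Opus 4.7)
The plan is to reduce smoothness of $\os$ to the smoothness, already essentially established in Propositions~\ref{prop: solving psi + and psi - as functions of psi 0} and \ref{prop:solving for theta}, of the implicit solutions $\psi_0, \psi_\pm$ as functions of the gluing parameters. The obstruction bundle $\ob \to [r,\infty)^2$ is trivial with fiber the finite-dimensional space $\hom(\coker(D_{u_0}),\R)$, so after fixing a basis $\{\sigma_j\}$ of $\coker(D_{u_0})$ it suffices to check that each coordinate function
\[
(R_0^-, R_0^+) \;\mapsto\; \langle \sigma_j, F_0(\psi_0(R_0^-, R_0^+))\rangle_{L^2}
\]
is smooth. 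Since $\langle \sigma_j, \cdot\rangle_{L^2}$ is a bounded linear functional on $L^2$, and hence smooth, the problem reduces to showing that $(R_0^-, R_0^+) \mapsto F_0(\psi_0) \in L^2$ is smooth.

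Unpacking the definition (\ref{eqn:definition of F0}),
\[
F_0(\psi_0) \;=\; \beta_-'(u_- + \psi_-) + \beta_+'(u_+ + \psi_+) + Q_0(\psi_0),
\]
where the cutoffs $\beta_\pm'$ depend smoothly on $R_\pm = R_0^\pm/A$ by Definition~\ref{defn: cutoff functions}; the translated flowlines $u_\pm$ depend smoothly on their translation parameters; $Q_0 \colon W^{1,2} \to L^2$ is smooth by the factorization $Q_0(\psi) = q(\psi)q_2(\psi)$ of Section~\ref{sec: equation for deformation to be a flowline} together with smoothness of Sobolev multiplication; and the inclusion $W^{1,2} \hookrightarrow L^2$ is bounded linear. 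Consequently the proposition reduces to smoothness of the maps $(R_0^-, R_0^+) \mapsto \psi_0 \in \cH_0$ and $(R_0^-, R_0^+) \mapsto \psi_\pm \in \cH_\pm$ (untranslated) as functions of the gluing parameters.

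For this final step, the cleanest approach is to apply the smooth implicit function theorem for Banach spaces to the combined fixed-point system
\[
\Psi \;=\; \mathcal{I}(\Psi;\, R_0^-, R_0^+), \qquad \Psi := (\psi_-, \psi_0, \psi_+) \in \cH_- \oplus \cH_0 \oplus \cH_+,
\]
assembled from $\mathcal{I}_-, \mathcal{I}_0, \mathcal{I}_+$. The combined map $\mathcal{I}$ is smooth jointly in $\Psi$ and the parameters, since every building block (cutoffs $\beta_*$, translated flowlines $u_*$, right inverses $D_*^{-1}$, and $Q_*$) is smooth in its arguments; and the linearization $\mathrm{Id} - d_\Psi \mathcal{I}$ at the fixed point is invertible by Neumann series because the contraction estimates (cf.\ \ref{eqn: contraction inequality of psi minus} and its analogues for $\mathcal{I}_0, \mathcal{I}_+$) give $\|d_\Psi \mathcal{I}\| \le 1/2$. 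The implicit function theorem then produces a smooth solution map $(R_0^-, R_0^+) \mapsto \Psi(R_0^-, R_0^+)$. The main technical check, which is the bulk of the work and was already carried out in Proposition~\ref{prop:solving for theta}(4), is verifying that each $R_*$-derivative of $\mathcal{I}$ lands in the correct Sobolev space: differentiating in $R_*$ produces an extra $s$-derivative on the translated $u_*$ or $\psi_*$, which is absorbed either by the exponential decay of $u_\pm$ (so its $s$-derivatives remain bounded in $W^{1,2}$) or by the order-one smoothing action of $D_*^{-1}$. Iterating the argument of Proposition~\ref{prop:solving for theta}(4) yields joint $C^k$ regularity for every $k$, completing the proof.
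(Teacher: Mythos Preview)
Your proposal is correct and follows essentially the same approach as the paper: both reduce smoothness of $\os$ to the smoothness of $\psi_0,\psi_\pm$ in the gluing parameters established in Proposition~\ref{prop:solving for theta} (and Proposition~\ref{prop: solving psi + and psi - as functions of psi 0}). The paper's proof is a one-line reference to those propositions together with the remark that $\Pi$ is independent of the pregluing parameters, whereas you spell out the reduction more explicitly and repackage the contraction-mapping construction as a smooth implicit function theorem argument; this is the same content, just more detailed.
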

\begin{proof}

The proof follows from the smoothness of $\psi_*$ with respect to $s \in \R$ and the gluing parameters $(R_-,R_0^-,R_0^+, R_+)$, see Proposition~\ref{prop:solving for theta}, and observing that $\Pi$ does not depend on the pregluing parameters.\footnote{The smoothness property is much more complicated in \cite{Hutchings-Taubes_2009} because their base of the obstruction bundle is much more complicated. See Section 6 of\cite{Hutchings-Taubes_2009} for their case.}
\end{proof}

The following lemma will be a consequence of Section~\ref{sec: C1 estimates}, which contains a detailed analysis of the obstruction section. In Section~\ref{sec: C1 estimates}, we will show that the obstruction section $\os$ is ``$C^1$-close'' to the linearized obstruction section $\os_0$. Clearly, $\os_0$ is ``$C^1$-close to $\os_{00}$'' and $\os_{00}$ is transverse to the zero section\footnote{The term $C^1$-approximation is somewhat of a loaded word in this context, because we are talking about $C^1$-approximating a function whose $C^1$-norm at various points in the domain can be very close to zero, and our required notion of $C^1$ approximation is not uniform in $R_0 = R_0^- + R_0^+$. However, we will be very precise about what it means to be $C^1$ close, and we shall see all of the claimed properties follow as desired.}. Therefore, $\os$ is transverse to the zero section. We note that our approach to showing that $\os$ is transverse to the zero section differs from that taken in \cite{Hutchings-Taubes_2009}.
\begin{lem}\label{lem: transversality of obstruction section}
    The obstruction section $\os: [r, \infty)^2 \to \ob$ is transverse to the zero section. 
\end{lem}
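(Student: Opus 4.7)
The plan is to reduce transversality of $\os$ to a calculation for an explicit toy model, using the hierarchy of approximations $\os \rightsquigarrow \os_0 \rightsquigarrow \os_{00}$ promised by the analysis of Sections~\ref{sec: C0 closeness of linearized section} and \ref{sec: C1 estimates}. Here $\os_0$ is the linearized obstruction section, and $\os_{00}$ will denote the further simplification obtained by replacing $u_\pm, \psi_\pm, \tpsi_0$ in $F_0$ by their leading-order asymptotic expansions and pairing with $\sigma_0$ using only the leading exponential terms in \ref{eqn: expansion of sigma 0}. Concretely, after plugging in the identification $\ob_{(R_0^-, R_0^+)} = \hom(\coker D_{u_0}, \R) \cong \R$ (via the fixed generator $\sigma_0$), $\os_{00}$ should take the form
\begin{align}
    \os_{00}(R_0^-, R_0^+) \;=\; c_-\,\langle a_-, b_- \rangle\, e^{-\lambda_0^+ (R_-+R_0^-)} \;+\; c_+\,\langle a_+, b_+\rangle\, e^{-\lambda_1^- (R_++R_0^+)},
\end{align}
for fixed positive constants $c_\pm$ determined by the cutoff $\beta$. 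This expression is read off from the integrals of $\beta'_\pm u_\pm$ against $\sigma_0$ restricted to the supports of $\beta'_\pm$, and crucially vanishes at infinity at the two explicit exponential rates dictated by the eigenvalues of $\hess_{x_0} f$ and $\hess_{x_1} f$.

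First, I would verify that $\os_{00}$ is transverse to the zero section on $[r,\infty)^2$ (for $r$ large) by a direct computation. Under the sign hypothesis \ref{eqn: sign condition for gluing}, the two summands have the same sign and hence $\os_{00}$ is nowhere zero, so transversality is vacuous; under the opposite sign hypothesis, the zero set of $\os_{00}$ is a smooth curve in $(R_0^-, R_0^+)$-space (one can solve explicitly for $R_0^+$ as a function of $R_0^-$), and the gradient of $\os_{00}$ is nonzero at each point of this curve because the two exponential terms have strictly positive derivatives in opposite senses. This is precisely the combinatorial picture summarized in Figure~\ref{fig: linearized section}.

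Next, I would transfer transversality through the chain of approximations. The $C^0$ and $C^1$ estimates of Sections~\ref{sec: C0 closeness of linearized section} and \ref{sec: C1 estimates} will show that, for some fixed $\delta > 0$ (depending on the spectral gaps encoded by $\lambda_0^+$ and $\lambda_1^-$ and the parameter $A$ relating $R_\pm$ to $R_0^\pm$),
\begin{align}
    \bigl|\os(R_0^-,R_0^+) - \os_0(R_0^-,R_0^+)\bigr| + \bigl|\nabla \os - \nabla \os_0\bigr|
    \;\lesssim\; e^{-(1+\delta)\lambda_0^+ (R_-+R_0^-)} + e^{-(1+\delta)\lambda_1^-(R_++R_0^+)},
\end{align}
with an analogous estimate relating $\os_0$ to $\os_{00}$. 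The right-hand side is strictly smaller than each individual leading term in $\os_{00}$, so on any putative zero $(R_0^-,R_0^+)$ of $\os$ the two leading contributions to $\nabla\os_{00}$ must be comparable in magnitude (forcing a near-linear relation between $R_0^-$ and $R_0^+$), and $\nabla\os$ is nonzero there by the triangle inequality. This is the key nonuniform point: we do not claim a global $C^1$-approximation in the sup norm, but rather that at every point where $\os$ might vanish, its derivative is dominated by the explicit derivative of $\os_{00}$. Once $A$ is fixed large enough that the error terms are beaten by the dominant exponentials for all $R_0^\pm \geq r$, the implicit function theorem applied locally at each point of $\os^{-1}(0)$ finishes the proof.

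The main obstacle is making the final $C^1$-closeness rigorous: the derivatives $\partial \psi_0 / \partial R_*$ appearing in $\nabla \os$ must be bounded by estimates that decay strictly faster than the leading exponential terms $e^{-\lambda_0^+(R_-+R_0^-)}$ and $e^{-\lambda_1^-(R_++R_0^+)}$ appearing in $\os_{00}$. This is exactly where the asymmetric pregluing (choosing $R_\pm = R_0^\pm/A$ with $A$ large) is used: it gives the pregluing error from the middle segment enough room to decay, so that its contribution to $\nabla\os$, routed through $\psi_\pm$ via Proposition~\ref{prop: solving psi + and psi - as functions of psi 0}(4) and $\psi_0$ via Proposition~\ref{prop:solving for theta}(4), is strictly subleading. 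Deferring the full proof of this quantitative comparison to Sections~\ref{sec: C0 closeness of linearized section} and \ref{sec: C1 estimates}, transversality of $\os$ follows.
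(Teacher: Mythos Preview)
Your approach is exactly the paper's: reduce transversality of $\os$ via the chain $\os \rightsquigarrow \os_0 \rightsquigarrow \os_{00}$, verify transversality for the explicit $\os_{00}$ by hand, and then push it back using the $C^1$-closeness estimates of Sections~\ref{sec: C0 closeness of linearized section}--\ref{sec: C1 estimates}. The structure is correct and the deferral to those sections is appropriate.

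There is, however, a sign slip in your explicit formula for $\os_{00}$ that swaps the two cases. The cutoffs satisfy $\beta_-' \le 0$ and $\beta_+' \ge 0$ on their supports (since $\beta_-$ is nonincreasing and $\beta_+$ nondecreasing), so the two leading terms in $\os_{00}$ come with \emph{opposite} signs:
\[
\os_{00}(R_0^-, R_0^+) \;=\; -\,\langle a_-, b_- \rangle\, e^{-\lambda_0^+ (R_-+R_0^-)} \;+\; \langle a_+, b_+\rangle\, e^{-|\lambda_1^-| (R_++R_0^+)}.
\]
(Note also that $\lambda_1^-$ is negative, so the decay rate is $|\lambda_1^-|$, not $\lambda_1^-$.) Thus under the sign hypothesis \eqref{eqn: sign condition for gluing} the zero set of $\os_{00}$ is \emph{nonempty} (and one checks its $(1,-1)$-directional derivative never vanishes, giving transversality), while opposite signs make $\os_{00}$ nowhere zero. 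For the transversality lemma alone your swapped cases are harmless---transversality holds either way---but your stated case analysis contradicts Theorem~\ref{thm: gluing}, so it is worth correcting.
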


Proposition~\ref{prop:smootheness of obstruction section} and Lemma~\ref{lem: transversality of obstruction section} together show $\os^{-1}(0)$ is a manifold. Therefore, we can define the ``gluing" on $\os^{-1}(0)$ to obtain our candidate parametrization of the space of flowlines limiting to $(u_-, u_0, u_0)$.
\begin{defn}\label{defn: gluing map}
Given $R_-, R_+ > r$ and $R_0^\pm$, define the {\bf$\mathbf{(R_0^-, R_0^+)}$-gluing}, denoted by $u( R_0^-, R_0^+)$, to be the deformed pregluing \ref{eqn:deformation of pregluing}, where $\psi_\pm$ are given by Proposition~\ref{prop: solving psi + and psi - as functions of psi 0} as functions of $\psi_0$ and $\psi_0$ is given by Proposition~\ref{prop:solving for theta} for the gluing parameters $(R_-, R_0^-, R_0^+, R_+)$ defined by setting
    \begin{align}
       R_0 = R_0^- + R_0^+, \quad R_\pm = \frac{R_0^\pm}{A},
    \end{align} for the same large $A \in \Z$ as in Definition~\ref{defn: obstruction section}.
    Define the {\bf gluing map} as \begin{align}\label{eqn:definition of gluing map}
        G: \os^{-1} (0) \to \ms_{x_{-1}, x_{2}}, \quad (R_0^-, R_0^+) \mapsto u(R_0^-, R_0^+).
    \end{align}
\end{defn}
Under the identification $\hom(\coker(D_0), \R) \cong \coker(D_0)$ given by the inner product, we have
\begin{align}
    \os(R_0^-, R_0^+) = \Pi F_0(\psi_0).
\end{align}
Thus, by \ref{eqn:psi 0 cokernel part}, $u(R_0^-, R_0^+)$ is a flowline if and only if $\os(R_0^-, R_0^+) = 0$. 
Our next goal is to show that the gluing map is a parametrization of all curves in $\ms_{x_{-1}, x_2}$ that are ``close to breaking" into $(u_-, u_0, u_+)$, that we state in Theorem~\ref{thm:gluing map is homeo} below. We need to define what it means to be ``close to breaking." \footnote{See Definition 7.1 in \cite{Hutchings-Taubes_2009}.}
\begin{defn}\label{defn:open nbhd around broken u}
Fix $A$ as in Definition~\ref{defn: gluing map}.
    For $\delta > 0$ small enough so that all the required exponentiations are defined, define {\bf space of paths close to } $\mathbf{(u_-, u_0, u_+)}$, $ \mathbf{\widetilde G_\delta(u_+, u_0, u_-)}$, to be the set of paths in $M$ that can be decomposed to $v_- \star v_0 \star v_+$, where $\star$ denotes concatenation, such that there exists  $S_- \in \mathbb{R}$ and $(S_0^+,S_0^-)\in \mathbb{R}_{>0}^2$ such that
    \begin{itemize}
        \item there exists a section $\eta_-$ of the normal bundle of $u_-$ with $\|\eta_-\|_\infty < \delta$ such that 
        $$v_- : \left(-\infty, 1 + \frac{1}{\delta} +S_-\right] \to M$$ is given by $v_-(s+S_-) = \exp_{u_-(s)} \eta_-(s)$;
        \item A section $\eta_0$ of the normal bundle of $u_0$ with $\|\eta_0\|_\infty < \delta$ such that 
        $$v_0 : \left[1 + \frac{1}{\delta} +S_- , 3  +\frac{1}{\delta} + S_0^++S_0^-+S_-\right] \to M$$ 
        is given by $v_0\left(s + \left(2+  S_- +S_0^-+\frac{1}{\delta}\right)\right) = \exp_{u_0} \eta_0(s) $;
        \item There exists a a section $\eta_+$ of the normal bundle of $u_+$ with $\|\eta_+\|_\infty < \delta$ such that 
        $$v_+: \left[3 +  \frac{1}{\delta} +S_0^-+S_0^++S_-, \infty\right) \to M$$ is given by $v_+\left(s + \left(4+ \frac{2}{\delta} + S_0^- + S_0^++S_- \right) \right) = \exp_{u_+(s)} \eta_+(s)$; this conditions is saying we should see a small perturbation of $u_+(s)$ restricted to $s\in(-\frac{1}{\delta},\infty)$ suitably translated in $v_-\star v_0\star v_+$.
        \item The concatenation makes sense, that is, the endpoints match:
        \begin{align} 
        v_-\left(1 + \frac{1}{\delta} +S_-\right) &= v_0\left(1 + \frac{1}{\delta} +S_- \right)\\
        v_0\left(3 +  \frac{1}{\delta} +S_0^-+S_0^++S_-\right) &= v_+\left(3 +  \frac{1}{\delta} +S_0^-+S_0^++S_-\right).
        \end{align}
    \end{itemize}
    Let the {\bf space of flowlines close to } $\mathbf{(u_-, u_0, u_+)}$, $\mathbf{G_\delta(u_-, u_0, u_+)}$, be the set of paths $v \in \widetilde G_\delta(u_-, u_0, u_+)$ such that $v$ is a flowline. Note that, for $\delta >0$ small enough, any element of $G_\delta(u_-, u_0, u_0)$ is in $\ms_{x_{-1}, x_2}$ and has index $2$.
\end{defn}
\begin{defn}    
    Given $\delta > 0$, define the space of paths that are close to breaking into $(u_-, u_0, u_+)$ that we obtain in the image of the gluing map, $\mathbf{U_\delta} \subset [r, \infty)^2$, to be the set of $(R_0^-, R_0^+) \in [r, \infty)^2$ such that $u(R_0^-, R_0^+) \in \widetilde G_\delta(u_+, u_0, u_-)$.
\end{defn}
We are now ready to state the promised parametrization result.
\begin{thm}[Theorem 7.3 in \cite{Hutchings-Taubes_2009}]\label{thm:gluing map is homeo}
     If $r$ is sufficiently large with respect to $\delta > 0$, then
     \begin{enumerate}
        \item[(a)]
        the entire base space 
        $
            [r, \infty)^2  \subset U_\delta,
        $ and
        \item[(b)]the gluing map \ref{eqn:definition of gluing map} restricts to a homeomorphism
        \begin{align}\label{eqn:definition of gluing isomorphism}
            G: \os^{-1}(0) \cap U_\delta \to G_\delta(u_+, u_0, u_-)/\mathbb{R}.
        \end{align}
        Here, the quotient in $\mathbb{R}$ refers to the reparametrization of the domain.
    \end{enumerate}
\end{thm}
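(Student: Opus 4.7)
The plan is to prove (a) and (b) separately, both leveraging the uniqueness of the fixed points in the contraction mapping arguments of Propositions~\ref{prop: solving psi + and psi - as functions of psi 0} and \ref{prop:solving for theta}.

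For (a), the required decomposition can be read off directly from the construction. By the paragraph following Equation~\ref{eqn:definition of pregluing}, on the intervals where exactly one of $\beta_-,\beta_0,\beta_+$ is nonzero, $u_\#$ equals $u_-$, the translate of $u_0$, or the translate of $u_+$, respectively. The added deformation $\beta_-\psi_-+\beta_0\tpsi_0+\beta_+\tpsi_+$ has $W^{1,2}$-norm bounded by the estimates of Propositions~\ref{prop: solving psi + and psi - as functions of psi 0} and \ref{prop:solving for theta}, which tend to zero as $r\to\infty$; the one-dimensional Sobolev embedding $W^{1,2}(\R)\hookrightarrow C^0(\R)$ converts this into a $C^0$-bound of the same order. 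Picking $r$ large enough that this bound is less than $\delta$ and that the transition regions $\supp\beta'_\pm$ and $\supp\beta'_0$ each have width at least $2/\delta$, I verify the conditions of Definition~\ref{defn:open nbhd around broken u} by choosing $S_-$ and $S_0^\pm$ explicitly in terms of $R_0^\pm$.

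The substantive content is surjectivity in (b). Given a flowline $v\in G_\delta(u_-,u_0,u_+)$ with decomposition $v=v_-\star v_0\star v_+$ and parameters $(S_-,S_0^-,S_0^+)$, I use the $\R$-translation to normalize $S_-$ and then set $R_0^\pm:=S_0^\pm$ and $R_\pm:=R_0^\pm/A$. With these parameters in hand, the pregluing $u_\#$ is well-defined, and the ambient difference $\Psi$ with $v=\exp_{u_\#}\Psi$ splits, via the partition-of-unity structure built into $u_\#$, into candidates for $(\psi_-,\tpsi_0,\tpsi_+)$. After a small shift of $v$'s parametrization and small normal realignments of each piece (which absorb the one-dimensional kernel contributions generated respectively by $u_\pm'$ and $u_0'$), these candidates lie in $\cB_{\epsilon,-}\times\cB_{\epsilon,0}^\tau\times\cB_{\epsilon,+}^\tau$ and satisfy $\Theta_-=\Theta_0^\tau=\Theta_+^\tau=0$ identically, because $v$ is a flowline. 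By the uniqueness of the fixed point in Propositions~\ref{prop: solving psi + and psi - as functions of psi 0} and \ref{prop:solving for theta}, they must coincide with the vector fields produced by our construction, so $v=u(R_0^-,R_0^+)$. The cokernel equation $\Pi F_0(\psi_0)=\os(R_0^-,R_0^+)=0$ then holds automatically, placing $(R_0^-,R_0^+)$ in $\os^{-1}(0)\cap U_\delta$.

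Injectivity of $G$ modulo the $\R$-action follows because the decomposition parameters $S_0^\pm$ are intrinsic lengths of the glued flowline, so two different pairs $(R_0^\pm)$ cannot produce the same flowline up to reparametrization. Continuity of $G$ is immediate from the smooth dependence of the pregluing on $(R_0^-,R_0^+)$ together with Proposition~\ref{prop:solving for theta}(4); continuity of $G^{-1}$ reduces to continuity of the decomposition extraction, which is a standard implicit-function argument once $\delta$ is small. The main obstacle I anticipate is the clean identification in surjectivity between the normal sections $\eta_\pm,\eta_0$ of Definition~\ref{defn:open nbhd around broken u} and vector fields living in the prescribed $L^2$-complements $\cH_-,\cH_0^\tau,\cH_+^\tau$; this identification is possible only after exhausting the residual reparametrization freedom, and the bookkeeping making this precise is the most delicate part of the argument.
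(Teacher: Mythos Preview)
Your argument for (a) and for continuity is essentially the paper's, but there are genuine gaps in both the surjectivity and injectivity steps of (b).

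For surjectivity, your claim that the candidates ``satisfy $\Theta_-=\Theta_0^\tau=\Theta_+^\tau=0$ identically, because $v$ is a flowline'' is false on the overlap intervals. The flowline condition for $v$ only gives you Equation~\ref{eqn: deformation is gradient flow}, namely $\beta_-\Theta_-+\beta_0\Theta_0^\tau+\beta_+\Theta_+^\tau=0$. On $\supp\beta_-\cap\supp\beta_0$ both $\beta_-$ and $\beta_0$ are nonzero, so knowing that a particular linear combination vanishes does not tell you that $\Theta_-=0$ and $\Theta_0^\tau=0$ separately; moreover the equations $\Theta_\pm=0$ and $\Theta_0^\tau=0$ are required to hold on all of $\R$, not just where the corresponding $\beta_*$ equals $1$. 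The single vector field $\Psi$ over $u_\#$ does not canonically split as $\beta_-\psi_-+\beta_0\tpsi_0+\beta_+\tpsi_+$ with each summand satisfying its own equation. The paper's proof (Lemma~\ref{lem:surjectivity of the gluing isomorphism}) handles this by first taking $\psi_*$ equal to $\eta_*$ on the regions where only one $\beta_*$ is supported, and then \emph{constructing} extensions of $\psi_-$ and $\tpsi_0$ across the overlap interval $I_0$ (and similarly $\tpsi_0,\tpsi_+$ across $I_1$) by solving an explicit boundary value problem, Lemma~\ref{lem: lemma for surjectivity} and Proposition~\ref{prop_semi_infinite_extension}. These use crucially that the metric is Euclidean on the overlap regions, so the relevant operators reduce to $\frac{d}{ds}+A$ with $A$ constant and one can write everything in eigenbases. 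Only after this does one adjust the pregluing parameters to achieve $\psi_*\in\cH_*$; the kernel-orthogonality bookkeeping you flag is real but secondary to the extension problem.

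Your injectivity argument also does not work as stated: the decomposition parameters $S_0^\pm$ of Definition~\ref{defn:open nbhd around broken u} are not intrinsic to the flowline, since the splitting $v=v_-\star v_0\star v_+$ is not unique. The paper (Lemma~\ref{lem:injectivity of gluing isomorphism}) instead fixes a point $p_0\in\im u_0$, compares where the two glued curves pass through a small ball around $p_0$, and uses the derivative bounds of Proposition~\ref{prop:solving for theta}(4) to show $|\Delta R_0^\pm|\lesssim e^{-\Lambda r}(|\Delta R_0^-|+|\Delta R_0^+|)$, forcing $\Delta R_0^\pm=0$ for $r$ large.
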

\begin{proof}
    Part (a) follows from the norm and derivative estimates in Propositions~\ref{prop:solving for theta} and \ref{prop: solving psi + and psi - as functions of psi 0}.
    We divide the proof of Part (b) into two lemmas. We prove injectivity of $G$ in Lemma~\ref{lem:injectivity of gluing isomorphism} and surjectivity in Lemma~\ref{lem:surjectivity of the gluing isomorphism}. Continuity of $G$ follows from Proposition~\ref{prop: solving psi + and psi - as functions of psi 0}(2) together with Proposition~\ref{prop:smootheness of obstruction section}.
\end{proof}

\subsection{The linearized section $\os_0$}\label{sec: linearized obstruction section}
Now that we have that the gluing map is a homeomorphism between $\os^{-1}(0) \cap U_\delta$ and the flowlines close to $(u_-, u_0, u_+)$, we would like to ``count" the zeroes of the obstruction section so that we can ``count" the number of gluings of a broken flowline. This will conclude the proof of Theorem~\ref{thm: gluing}.
Directly counting the zeroes of $\os$ is difficult. So, in this section, we introduce a ``linearized obstruction section" $\os_0$ whose zeroes are easier to track. Despite the name, this is not strictly the linearization of $\os$ but instead is a $C^1$-approximation of $\os$ and therefore, has the same count of zeroes over the base $[r, \infty)^2$. \footnote{This section is analogous to Section~8.1 of \cite{Hutchings-Taubes_2009}.}

 \begin{defn}
 To define an element of $\ob$, it is enough to give how it pairs with the element $\sigma_0$.
      Let $s$ denote the domain variable of $u_0(s)$, let $\sigma_0$ denote a cokernel element of $D_{0}$. Recall that $\langle-,-\rangle$ is the $L^2$ pairing of the space $W^{1,2}(u_0^*TM)$. Define the {\bf linearized section} $\os_0: [r, \infty)^2 \to \ob$ as follows.
     \begin{align}\label{eqn: defn of s0}
         \os_0(R_0^-, R_0^+)(\sigma_0) &:= \langle \beta_-'(s+R_0^-+R_-+2)(u_-(s+R_0^-+R_-+2), \sigma_0 \rangle \\
        &+ \langle \beta_+'(s-(R_0^+ + R_+))(u_+ (s-(R_0^+ + R^+ + 2)), \sigma_0 \rangle
\end{align}
\end{defn}
\begin{rem}
The shifts in $\beta_\pm$ by factors of $R_*$ is because we have chosen to think about domains of $u_0$ instead of $u_0^\tau$. If we worked in the domain of $u_0^\tau$ we could have similarly defined an obstruction section $\mathfrak{s}^\tau$ and the linearized obstruction section $\mathfrak{s}_0^\tau$. That is, if we let $s$ denote the coordinate of $u_0^\tau$, the said linearized obstruction section would have been
\[
\mathfrak{s}_0^\tau(R_0^-,R_0^+)(\sigma_0^\tau) : = \langle \beta_-'(s) u_-(s), \sigma_0^\tau \rangle + \langle \beta_+'(s)u_+(s),\sigma_0^\tau\rangle.
\]
\end{rem}

     The advantage of $\os_0$ over $\os$ is that its zeroes are easy to compute. Let us elaborate what we mean and conclude the proof of Theorem~\ref{thm: gluing}. 
     
     \begin{proof}[Proof of Theorem~\ref{thm: gluing}]
    \noindent\textbf{Step 1} We first recall the asymptotic expansion of the gradients flowlines $u_\pm$ near the critical points 
    \begin{align}
 u_-& =   e^{-\lambda_0^+ s}a_- + \sum_{v_-} e^{-\lambda_+ s} v_- & s> 1,\\
         u_+ & = e^{-\lambda_1^- s}a_+   + \sum_{v_+} e^{-\lambda_- s} v_+ &s < -1,
     \end{align}
    and similarly in Equation~\ref{eqn: expansion of sigma 0} the asymptotic form of $\sigma_0$
    \begin{align}
    \sigma_0 = \begin{cases}
         e^{\lambda_0^+ s}b_- + \sum_{\lambda_+,v_+} e^{\lambda_+ s} v_+ &s < -1,\\
         e^{\lambda_1^- s}b_+ + \sum_{\lambda_-,v_-} e^{\lambda_- s} v_- & s> 1.
    \end{cases}
\end{align}
    We note in both cases they consist of a largest term, for example $u_-$ this is $e^{-\lambda_0^+s}a_-$, and smaller higher order terms. 
    By taking the pairings with $\sigma_0$ only, the largest terms in the asymptotic expansion, we expand $\os_0$ as
     \begin{align}
         \os_0(R_0^-, R_0^+)(\sigma_0) = -\langle b_-, a_-\rangle e^{-\lambda_0^+ ( R_- + R_0^- )} +  \langle b_+, a_+ \rangle e^{-|\lambda_1^-| (R_0^+ + R_+)} + E,
     \end{align}
     where $E$ is the pairing of the higher order terms of $\sigma_0$ with the higher order terms of $u_\pm$: 
     \begin{align}
         E = \os_0(R_0^-, R_0^+) (\sigma_0) -\left (-\langle b_-, a_-\rangle e^{-\lambda_0^+ ( R_- + R_0^- )} +  \langle b_+, a_+ \rangle e^{-|\lambda_1^-| (R_0^+ + R_+)}\right).
     \end{align}
    \noindent\textbf{Step 2}
    The fact that $\lambda_0^+$ is the smallest positive eigenvalue, and $\lambda_1^-$ is the largest  negative eigenvalue implies that $E$ is ``$C^1$-small" with respect to 
     \begin{align}
         \os_{00}(R_0^-, R_0^+) := - \langle b_-, a_-\rangle e^{-\lambda_0^+ ( R_- + R_0^- )} + \langle b_+, a_+ \rangle e^{-|\lambda_1^-| (R_0^+ + R_+)},
     \end{align}
     for $R_0^\pm$ large enough. We put ``$C^1$-small" in quotations because this is comparing the $C^1$-norms of two sections whose $C^1$-norms are themselves going to zero for large values of $R_0^\pm$. 
    To be more precise, by ``$C^1$-small" we mean, \[E \leq e^{-f_1(R_0^-,R_0^+)}e^{-\lambda_0^+ ( R_- + R_0^- )} + e^{-f_2(R_0^-,R_0^+)} e^{-|\lambda_1^-| (R_0^+ + R_+)}\]
    in $C^1$-norm. Here $f_i$ are functions of bounded derivative that go to $\infty$ as $R_0^+,R_0^- \rightarrow \infty$. To be a bit more precise, there exists $a_i,b_i> 0$ such that 
    \[
    f_i \geq \min \{a_i R_0^+,b_iR_0^-\}.
    \]
    That $E$ satisfies the above inequality follows directly from the sizes of the eigenvalues $\lambda_\pm$.

    We will sometimes write
    $E \ll \mathfrak{s}_{00}$ or $E\ll e^{-\lambda_0^+ ( R_- + R_0^- )} +  e^{-\lambda_1^- (R_0^+ + R_+)} $
    to denote the above for convenience. When we later speak of $C^1$ or even $C^0$ proximity of one term to another, we will always mean it in this sense.
    
    \noindent\textbf{Step 3}.
     We can split the enumeration of zeroes of $\mathfrak{s}_0$ into two situations.
     First suppose if $\langle a_-, b_- \rangle$ and $\langle a_+, b_+ \rangle$ have the opposite signs, then 
        $\os_{00}(R_0^-, R_0^+)$ never vanishes. For $R_0^+,R_0^->r$, with $r$ sufficiently large, $E\ll\mathfrak{s}_{00}$, and so $\mathfrak{s}_0$ also does not have zeroes.

     Suppose, $\langle a_-, b_- \rangle$ and $\langle a_+, b_+ \rangle$ have the same sign. We first observe that 
     \[
     \mathfrak{s}_{00}: [r,\infty)^2 \rightarrow \mathbb{R}
     \]
     has a nonempty zero set. We observe that as $\langle a_-, b_- \rangle$ and $\langle a_+, b_+ \rangle$ have the same sign, the directional derivative of $\mathfrak{s}_{00}$ in the direction $(1,-1)$ is never zero, and takes the form
     \[
     \lambda_0^+\langle  b_-,a_-\rangle e^{-\lambda_0^+(R_-+R_0^-)} +|\lambda_1^-| \langle b_+,a_+\rangle e^{-|\lambda_1^-|(R_0^2+R_+)}.
     \]
     This implies that the zero set of $\os_{00}$ is transversely cut out, and hence, a smooth 1-manifold.
     It follows from ``$C^1$-closeness'' that the $(1,-1)$ direction derivative of $\os_0$ is also never zero, hence the zero set of $\os_0$ is also a smooth 1-manifold. We are using the fact that the derivative of $E$ with respect to $R_0^-$ is exponentially smaller than the derivative of $\mathfrak{s}_{00}$ with respect to $R_0^-$, which is nonzero. So that the zero of $\mathfrak{s}_0$ is both unique and transverse.

     We may better understand the zero set of $\os_{00}$ as follows.
     If we fix 
     \begin{align}
             R_0^- + R_0^+ = R_0
         \end{align}
     and restrict to $R_0^-,R_0^+>r$, then we may view $\mathfrak{s}_{00}$ as a function of $R_0^- \in (r,R_0-r)$, which we write as
     \[
     \mathfrak{s}_{00}(R_0^-,R_0-R_0^-).
     \]
     We see that $\os_{00}(R_0^-,R_0-R_0^-)$ has a unique zero that is transversely cut out.
     It follows from the fact that $E\ll\mathfrak{s}_{00}$ that the same is true for $\mathfrak{s}_0$. Refer Figure~\ref{fig: linearized section}
     
    \textbf{Step 4}
          We have shown that if $\langle a_-, b_- \rangle$ and $\langle a_+, b_+ \rangle$ have the same sign, then for large enough fixed $R_0$, we have a unique zero $(R_0^-,R_0-R_0^-)$ of the linearized obstruction section $\os_0$, and if they have the opposite sign, there are no zeroes. To conclude the proof of Theorem~\ref{thm: gluing}, we need to argue that the same is true for the nonlinear obstruction section $\mathfrak{s}$.    We accomplish this by showing $\os_0$ and $\os$ are ``$C^1$-close'' in the sense specified in Step 2. The definition of ``$C^1$-close'' is exactly so that:
          \begin{itemize}
          \item If $\langle a_-, b_- \rangle$ and $\langle a_+, b_+ \rangle$ have opposite signs, $\os-\os_0$ is so small in $C^0$ norm compared to $\os_0$ such that adding $\os-\os_0$ to $\os_0$ will not introduce any new zeroes;
          \item If $\langle a_-, b_- \rangle$ and $\langle a_+, b_+ \rangle$ have the same signs, the $(1,-1)$ directional derivative of $\os-\os_0$ is so small compared to the $(1,-1)$-directional derivative of $\os_0$ such that the $(-1,1)$ directional derivative of $\os$ always has the same sign as the $(1,-1)$-directional derivative of $\os_0$. 

          \end{itemize}
          The claims about the zeroes of $\os$ immediately follow from the above and the properties of the zeroes of $\os_0$.
          
           Showing $\os$ and $\os_0$ are ``$C^1$-close'' is much more technical, and is discussed in the following two sections (Lemmas~\ref{lem: error term is C0 small} and \ref{lem: linearized obs section is C1 close}). With that, combining with Theorem~\ref{thm:gluing map is homeo}, we complete the proof of Theorem~\ref{thm: gluing}.
      \end{proof}

\begin{rem}
We note if $\langle a_-, b_- \rangle$ and $\langle a_+, b_+ \rangle$ have the same sign, then the $1$-manifold $\mathfrak{s}^{-1}(0)$ is parametrized simply by $R_0$.
\end{rem}

\subsection{$C^0$-estimates}\label{sec: C0 closeness of linearized section}
In this section, we show that the two sections $\os$ and $\os_0$ are $C^0$-close to each other. 
The linearized section $\os_0$ appears as part of the original section $\os$, as follows. By Equation~\ref{eqn: obstruction section inner product}, we can write
 \begin{align}\label{eqn: linearized obstruction section}
     \os(R_-, R_+)(\sigma_0) = 
     \langle \sigma_0, \mathcal{E}_0 + \mathcal{R}(\psi_0)\rangle,
 \end{align}
 where
\begin{align}
   \mathcal{E}_0 &:= \beta_-'(s+R_0^-+R_-+2)(u_-(s+R_0^-+R_-+2)\\
   &\quad + \beta_+'(s-(R_0^++R_+))(u_+(s-(R_0^++R^++2)))
\end{align}
while $\mathcal{R}(\psi_0)$ denotes the sum of all the other terms in \ref{eqn:psi 0 cokernel part untranslated} that enter into $F_0(\psi_0)$ \footnote{We can also define the translated version of these terms as $\mathcal{R}^\tau$.}. Note that $\mathcal{E}_0$ is supported only in $U_-$ and $U_+$. Then the linearized obstruction section is equal to
\begin{align}
    \os_0(R_-, R_+)(\sigma_0) = \langle \sigma_0,  \mathcal{E}_0 \rangle.
\end{align}
Now, showing that $\os$ and $\os_0$ are $C^0$-close reduces to proving the following lemma.
     \begin{lem}\label{lem: error term is C0 small}
     For parameters $h_\pm, h_0^\pm > 1/2$, and 
         $R_0^\pm > A R_\pm$
     for some large enough $A \in \Z$,
     the error term $\mathcal{R}$ satisfies exponential 
     \begin{align}
         \langle \cR, \sigma_0\rangle \ll \os_{00},
     \end{align}
     where $\ll$ means
     \begin{equation}
     |\langle \cR, \sigma_0\rangle|  \leq e^{-f_1(R_0^-,R_0^+)}e^{-\lambda_0^+ ( R_- + R_0^- )} + e^{-f_2(R_0^-,R_0^+)} e^{-|\lambda_1^-| (R_0^+ + R_+)},
     \end{equation}
     in $C^0$-norm. Here, $f_i$ are functions of bounded derivative that go to $\infty$ as $R_0^+,R_0^- \rightarrow \infty$. More precisely, there exists $a_i,b_i> 0$ such that 
    \[
    f_i \geq \min \{a_i R_0^+,b_iR_0^-\}.
    \]
 \end{lem}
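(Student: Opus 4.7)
The plan is to decompose
\[
\mathcal{R}(\psi_0) \;=\; \beta'_-\psi_-(\psi_0) \;+\; \beta'_+\psi_+(\psi_0) \;+\; Q_0(\psi_0),
\]
corresponding to the three pieces of $F_0(\psi_0)$ not already absorbed into $\mathcal{E}_0$, and to bound $|\langle\sigma_0,\cdot\rangle|$ for each summand separately. The basic tools will be Cauchy--Schwarz on the support of the relevant $\beta'$, the one-dimensional Sobolev embedding $\|\psi\|_{L^\infty}\lesssim\|\psi\|_{W^{1,2}}$, the asymptotic expansion \ref{eqn: expansion of sigma 0} of $\sigma_0$, and the exponentially decaying Sobolev bounds for $\psi_\pm$ and $\psi_0$ supplied by Propositions~\ref{prop: solving psi + and psi - as functions of psi 0}(2) and \ref{prop:solving for theta}(2). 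The key geometric observation throughout is that every $\beta'$-support, after translation to the appropriate flowline domain, sits inside a Morse neighbourhood of $x_0$ or $x_1$, so that the factors $\sigma_0$, $u_0$ and $u_\pm$ all pick up exponential weights governed by $\lambda_0^+$ or $|\lambda_1^-|$.

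For $\beta'_-\psi_-$ the support of $\beta'_-(\,\cdot\,+R_0^-+R_-+2)$ in $u_0$'s coordinates is an interval of width $\lesssim\gamma R_0^-$ centered near $s\approx -R_0^-(1-h_0^-)$, so $\|\sigma_0\|_{L^2,\,\supp\beta'_-}\lesssim (R_0^-)^{1/2}e^{-\lambda_0^+R_0^-(1-h_0^-)}$. Proposition~\ref{prop: solving psi + and psi - as functions of psi 0}(2) controls $\|\psi_-\|_{W^{1,2}}$ by $R_-^{-1}$ times the $L^2$-size of $u_0$ on $\supp\beta'_0\cap\supp\beta_-$, a set lying at depth $R_0^-+h_-R_-$ inside the chart at $x_0$ and hence contributing $\lesssim R_-^{1/2}e^{-\lambda_0^+(R_0^-+h_-R_-)}$. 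Combining with $\|\beta'_-\|_\infty\lesssim 1/R_0^-$ and dividing by $e^{-\lambda_0^+(R_-+R_0^-)}$ yields a ratio whose exponent is $-\lambda_0^+\bigl[(1-h_0^-)R_0^-\,-\,(1-h_-)R_-\bigr]$, which under the choice $R_-=R_0^-/A$ with $A$ greater than $(1-h_-)/(1-h_0^-)$ is at most $-cR_0^-$ for some $c>0$. The term $\beta'_+\psi_+$ is handled symmetrically with $\lambda_1^-$, $h_+$, $h_0^+$ in place of $\lambda_0^+$, $h_-$, $h_0^-$.

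For $Q_0(\psi_0)$ I use $|Q_0(\psi_0)|\lesssim|\psi_0|^2$ together with the Sobolev embedding to write
\[
|\langle\sigma_0,Q_0(\psi_0)\rangle|\;\lesssim\;\|\sigma_0\|_{L^1}\,\|\psi_0\|_{W^{1,2}}^{\,2}.
\]
Proposition~\ref{prop:solving for theta}(2) bounds $\|\psi_0\|_{W^{1,2}}$ by the $L^2$-size of $u_\pm$ on $\supp\beta'_\pm\cap\supp\beta_0$, which sits at depth $R_-+h_0^-R_0^-$ (respectively $R_++h_0^+R_0^+$) inside the Morse chart at $x_0$ (respectively $x_1$); up to polynomial prefactors this gives $\|\psi_0\|_{W^{1,2}}\lesssim e^{-\lambda_0^+(R_-+h_0^-R_0^-)}+e^{-|\lambda_1^-|(R_++h_0^+R_0^+)}$. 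Squaring and comparing with $e^{-\lambda_0^+(R_-+R_0^-)}$ produces the ratio exponent $-\lambda_0^+[R_-+(2h_0^--1)R_0^-]$, which is negative and grows linearly in $R_0^-$ \emph{exactly} when $h_0^->1/2$; the analogous inequality against $e^{-|\lambda_1^-|(R_++R_0^+)}$ forces $h_0^+>1/2$. To produce the two $f_i$ in the statement I will group $\beta'_-\psi_-$ together with the $R_0^-$-part of $Q_0$ to control the first summand, and symmetrically $\beta'_+\psi_+$ with the $R_0^+$-part of $Q_0$ for the second; each $f_i$ is then the minimum of two positive linear combinations of $R_0^-$ and $R_0^+$, giving the claimed lower bound $\min\{a_iR_0^+,b_iR_0^-\}$.

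The main obstacle is not any single estimate but the delicate bookkeeping: both $\mathcal{R}$ and $\os_{00}$ are themselves exponentially small, so ``$\ll\os_{00}$'' means the ratio of exponential rates must still tend to $+\infty$ with $R_0^\pm$. The asymmetric pregluing---$R_\pm=R_0^\pm/A$ with $A$ large together with $h_\pm,h_0^\pm>1/2$---is engineered precisely so that each of the three comparisons above produces a ratio with strictly positive leading coefficient in $R_0^\pm$: the condition $h_0^\pm>1/2$ is what makes the quadratic term subdominant, while taking $A$ large is what saves the linear pregluing-error terms $\beta'_\pm\psi_\pm$. Once these three exponent comparisons are verified term by term, the required functions $f_1,f_2$ can be read off as the minima of the respective positive linear combinations of $R_0^\pm$, yielding the desired bound.
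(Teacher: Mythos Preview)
Your decomposition and the treatment of the quadratic term $Q_0(\psi_0)$ are fine and match the paper, but the linear terms $\langle\sigma_0,\beta'_\pm\psi_\pm\rangle$ contain a genuine gap. First, a minor slip: the decay rate of $u_0$ on $\supp\beta'_0\cap\supp\beta_-$ is governed by $|\lambda_0^-|$ (the unstable eigenvalue at $x_0$), not by $\lambda_0^+$, since $u_0$ leaves $x_0$ along the unstable manifold; so your bound $\|u_0\|\lesssim e^{-\lambda_0^+(R_0^-+h_-R_-)}$ should read $e^{-|\lambda_0^-|(R_0^-+h_-R_-)}$. Once this is corrected, the ratio of your estimate for $|\langle\sigma_0,\beta'_-\psi_-\rangle|$ to $e^{-\lambda_0^+(R_-+R_0^-)}$ has exponent
\[
R_0^-\bigl(\lambda_0^+ h_0^- - |\lambda_0^-|\bigr) \;+\; R_-\bigl(\lambda_0^+ - |\lambda_0^-|h_-\bigr),
\]
and when $\lambda_0^+>2|\lambda_0^-|$ the $R_0^-$-coefficient is \emph{positive} for every $h_0^->1/2$, so no choice of $A$ can rescue the argument. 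The paper is explicit on this point: it remarks that the global Sobolev bound on $\psi_-$ ``alone is not enough'' and would only give something comparable in size to $\os_0$.

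The missing idea is Proposition~\ref{prop:autonomous}: on the interval between $\supp\beta'_0$ and $\supp\beta'_-$ the vector field $\psi_-$ satisfies the autonomous equation $D_-\psi_-=0$ (here Assumption~\ref{assumption on setup} is essential), and therefore decays with rate $\lambda_0^+$ across that gap. This gives the pointwise improvement $|\psi_-|_{\supp\beta'_-}\lesssim \|\psi_-\|\,e^{-\lambda_0^+(h_0^-R_0^-+h_-R_-)}$, and only after combining this with the decay of $\sigma_0$ does one obtain $\langle\sigma_0,\beta'_-\psi_-\rangle\lesssim \|\psi_-\|\,e^{-\lambda_0^+(R_0^-+h_-R_-)}$, which is what actually beats $\os_{00}$ for all eigenvalue ratios once $A$ is large enough. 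The paper then iterates the same autonomous-decay observation for $\psi_0$ on $\supp\beta'_0$ to sharpen $\|\psi_-\|$ further. Your Sobolev-embedding shortcut bypasses precisely this mechanism and cannot close the estimate in general.
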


 \begin{proof}
 The proof involves careful analysis of the asymptotic behaviour of the flowlines, the special cokernel element $\sigma_0$, and the perturbation sections $\psi_\pm$ and $\psi_0$.
 
 Throughout this proof, we suppress the notation of the chosen gluing parameters $(R_-, R_0^-, R_0^+, R_+)$, with the hope that this leads to greater clarity, rather than the opposite. Also, as norms do not change with a global change of coordinates, we often liberally switch between the untranslated gradient flowlines $u_0$ and $u_\pm$ and the translated gradient flowlines $u_0^\tau$ and $u_\pm^\tau$.
 
By using the asymptotic forms of $u_\pm$ and $u_0$, we have the following upper bounds on the Sobolev norms:
\begin{align}
    \|\tu_0\|_{
    \supp \beta'_0
    } & \lesssim e^{-|\lambda_0^-| (R_0^- + h_-R_-)} + e^{-\lambda_1^+(R_0^+ + h_+ R_+)}, \label{ineq: tail estimates 1 u0}\\
    \|u_-\|_{
    \supp \beta'_-
    } &\lesssim e^{-\lambda_0^+(R_- + h_0^-R_0^-)}, \quad \|\tu_+\|_{
    \supp \beta'_+
    } \lesssim e^{-|\lambda_1^-|(R_+ + h_0^+R_0^+)}.\label{ineq: tail estimates 1}
\end{align}
Similarly, the linearized section has norm with an upper bound,
\begin{align}\label{ineq: norm estimate on linearized section}
    \langle \mathcal{E}_0, \sigma_0 \rangle \lesssim e^{-\lambda_0^+(R_- + R_0^-)} + e^{-|\lambda_1^-|(R_0^+ + R_+)}.
\end{align}
We want to compare $e^{-\lambda_0^+(R_- + R_0^-)} + e^{-|\lambda_1^-|(R_0^+ + R_+)}$ to 
\begin{align}
    \mathcal{R}^\tau(\psi_0) =  \langle \os - \os_0, \sigma_0 \rangle =  \langle\beta'_-\psi_-, \sigma^\tau_0\rangle + \langle \beta'_+\tpsi_+, \sigma^\tau_0\rangle + \langle\mathcal{F}_0^\tau, \sigma_0^\tau \rangle,
\end{align}
where $\mathcal{F}_0^\tau$ denotes all the non-linear (with respect to $\psi_\pm$) terms in $\mathcal{R}^\tau$ and $\sigma_0^\tau$ denotes the translate of $\sigma_0$ that is a section of $(\tu_0)^* TM$.
Let us first estimate the first term $\langle \beta'_-\psi_-^\tau, \sigma_0^\tau\rangle$.
Recall from Proposition~\ref{prop: solving psi + and psi - as functions of psi 0}, we have the norm estimate,
\begin{align}
    \|\tpsi_\pm\| \lesssim \|\tpsi_0\|_{\supp \beta'_0 \cap \supp \beta_\pm} + \|\tu_0\|_{\supp \beta'_0 \cap \supp \beta_\pm}.
\end{align}

Note that even though the Sobolev norm of the section does not change with translation, the translation matters when we restrict the domain over which the norm is taken. We continue to denote translated sections with superscript $\tau$ while remembering that the actual translation depends on the gluing parameters.
As the supports on the right-hand side are restricted, we have additional exponential decay compared to Inequalities~\ref{ineq: tail estimates 1}:
\begin{align}\label{ineq: tail estimates 2}
    \|\tu_0\|_{\supp \beta'_0 \cap \supp \beta_-} &\lesssim e^{-|\lambda_0^-|(R_0^- + h_-R_-)},\\
    \|\tu_0\|_{\supp \beta'_0 \cap \supp \beta_+} & \lesssim e^{-\lambda_1^+(R_0^+ + h_+R_+)}.
\end{align}

However the above estimates for $\|\psi_-^\tau\|$ alone are not enough to bound the term $\langle \sigma_0, \beta_-' \psi_-^{\tau \prime}\rangle$ to the extent we would like. If we input those bounds we would find that $\langle \sigma_0, \beta_-' \psi_-^{\tau \prime}\rangle$ is comparable in size with $\mathfrak{s}_0$. A crucial estimate in \cite{Hutchings-Taubes_2009} is the observation that the part of $\psi_-^\tau$ that contributes to $\mathfrak{s}$ is substantially smaller than the total Sobolev norm of $\psi_-^\tau$. This is done by obtaining further exponential decay estimates for $\psi_-^\tau$ as it approaches to the support of $\beta_-'$ - this is done by observing over certain regions of $u_-$, the equation $\Theta_-=0$ is ``autonomous'' in $\psi_-^\tau$.

\begin{prop} \label{prop:autonomous}
Considered in the domain of $u_-^\tau$ with $s$ as the domain variable, for $s>s_0 = 1+R_--h_0^-R_-$
we have
\[
|\psi_-^\tau\|(s) \leq |\psi_-^\tau |(s_0) e^{-|\lambda_0^+(s-s_0)|}.
\]
\end{prop}
\begin{proof}
For $s>s_0$ the vector field $\psi_-^\tau$ satisfies
\[D_-^\tau \psi_-^\tau =0,
\]
and its exponential decay properties follow from Proposition \ref{Prop:decay_of_kernel}.

\end{proof}

This is still not quite enough to get the estimates on $\langle \sigma_0^\tau, \beta_-'\psi_-\rangle$ that we need\footnote{ In \cite{Hutchings-Taubes_2009} this estimate alone is enough, however, since we are gluing $u_-$ and $u_0$ at a critical point where they decay at different exponential rates, we need to make further improvements. In \cite{Hutchings-Taubes_2009}, this is not required because their analogue of $u_0$ is a branched cover of a trivial cylinder; near the Reeb orbit, it is constant, rather than exponentially decaying to the Reeb orbit.}. Our next step will be improving the overall bounds on the Sobolev norm of $\psi_-^\tau$. For that, we first improve the Sobolev norm of $\psi_0^\tau$ supported near $\beta_-'$.

\begin{prop}The Sobolev norm of $\psi_0^\tau$ satisfies:
\begin{align}\label{ineq: tail estimates 2}
    \|\tpsi_0\|_{\supp \beta'_0 \cap \supp \beta_-} &\lesssim \|\psi_0^\tau\|e^{-|\lambda_0^-|(h_-R_- + h_0^- R_0^-)},\\
     \|\tpsi_0\|_{\supp \beta'_0 \cap \supp \beta_+} &\lesssim \|\psi_0^\tau\|e^{-\lambda_1^+(h_+R_+ + h_0^+ R_0^+)}
\end{align}
\end{prop}
\begin{proof}
The first estimate on $\psi_0$ above are obtained as follows: we observe for $s<1+R_-+h_-R_0^-$ the vector field $\psi^\tau_0$ satisfies the autonomous equation
$D_0^\tau\psi_0^\tau=0$. Proposition~\ref{Prop:decay_of_kernel} gives the required exponential decay when applied to $D_0^\tau \psi_0^\tau = 0$ constrained to the support of $\beta'_0 \cap \supp \beta_-$. The second estimate follows similarly.
\end{proof}
So, we get
\begin{align}\label{eqn: norm estimate 1 psi_-}
    \|\psi_-\| & \lesssim \|\tu_0\|_{\supp \beta'_0 \cap \supp \beta_-} + \|\tpsi_0\|_{\supp \beta'_0 \cap \supp \beta_-}\\
    & \lesssim \|\tu_0\|_{\supp \beta'_0 \cap \supp \beta_-} + \|\tpsi_0\|e^{-|\lambda_0^-|(h_-R_- + h_0^- R_0^-)}\\
    & \lesssim \|\tu_0\|_{\supp \beta'_0 \cap \supp \beta_-} + \bigg(\|\tpsi_+\|_{\supp \beta'_+}\\
    & \quad+ \|\tpsi_-\|_{\supp \beta'_-} + \|\tu_-\|_{\supp \beta'_-} + \|\tu_+\|_{\supp \beta'_+}\bigg)e^{-|\lambda_0^-|(h_-R_- + h_0^- R_0^-)}.
    \end{align}
    This implies,
    \begin{align}
    \|\psi_-\|
    &\lesssim \|\tu_0\|_{\supp \beta'_0 \cap \supp \beta_-} + \bigg(\|\tu_0\|_{\supp \beta'_0 \cap \supp \beta_+} \\
    &\quad+ \|\tu_-\|_{\supp \beta'_-} + \|\tu_+\|_{\supp \beta'_+}\bigg)e^{-|\lambda_0^-|(h_-R_- + h_0^- R_0^-)}\\
    & \lesssim e^{-|\lambda_0^-|(R_0^- + h_-R_-)} + e^{-\lambda_1^+(R_0^+ + h_+R_+)-|\lambda_0^-|(h_-R_- + h_0^- R_0^-)}\\
    & \quad + e^{-(\lambda_0^+ +  |\lambda_0^-| h_-)R_- - (\lambda_0^+  + |\lambda_0^-| )h_0^-R_0^-} \\
    & \quad + e^{-|\lambda_1^-|(R_+ + h_0^+R_0^+) -|\lambda_0^-|(h_-R_- + h_0^- R_0^-)}.
\end{align}
Finally, we estimate
\begin{align}
    \langle \sigma_0^\tau, \beta'_-\psi_-\rangle & \lesssim  \langle \sigma_0^\tau, \psi_-\rangle\\
    &\lesssim \|\psi_-\|e^{-\lambda_0^+(R_0^- + h_-R_-)}\\
    & \lesssim e^{-(|\lambda_0^-| + \lambda_0^+)(R_0^- + h_-R_-)}\\
    & \quad + e^{-\lambda_1^+(R_0^+ + h_+R_+)-(|\lambda_0^-|h_0^- + \lambda_0^+)R_0^- - (|\lambda_0^-| + \lambda_0^+)h_-R_-}\\
    & \quad + e^{-(\lambda_0^+ (1 + h_-)+ |\lambda_0^-|h_-)R_- - (\lambda_0^+ (1 + h_0^-)+ |\lambda_0^-| h_0^-)R_0^-} \\
    & \quad + e^{-|\lambda_1^-|(R_+ + h_0^+R_0^+) -(|\lambda_0^-|h_0^- + \lambda_0^+)R_0^- - (|\lambda_0^-| + \lambda_0^+)h_-R_-}.
\end{align}
The second line used the exponential decay estimates we obtained for $\psi_-$, combined with the exponential decay of $\sigma_0^\tau$.

We now compare the above with the bound \ref{ineq: norm estimate on linearized section} on $\os_0$ term-by-term.
\begin{enumerate}
    \item If $|\lambda_0^-| > \lambda_0^+$, then pick $h_- > 1/2$. Otherwise, pick $|\lambda_0^-| R_0^- > \lambda_0^+ R_-$. In both cases, we get,
    \begin{align}
     e^{-(\lambda_0^+ + |\lambda_0^-|)(R_0^- + h_-R_-)} \ll e^{-\lambda_0^+(R_- + R_0^-)}.
\end{align}
\item If $|\lambda_0^-| > \lambda_0^+$, pick $h_- > 1/2$ and get $(|\lambda_0^-| + \lambda_0^+)h_-R_- > \lambda_0^+ R_-$. Otherwise, take
\begin{align}
    |\lambda_0^-|R_0^- > \lambda_0^+ R_-, \text{ and }\quad h_-, h_0^- > \frac12,
\end{align}
and get
\begin{align}
    \lambda_0^+h_-R_- + |\lambda_0^-| h_- R_0^- > \lambda_0^+ R_-.
\end{align}
In either case, we get
\begin{align}
    e^{-\lambda_1^+(R_0^+ + h_+R_+)-(|\lambda_0^-|h_0^- + \lambda_0^+)R_0^- - (|\lambda_0^-| + \lambda_0^+)h_-R_-} \ll e^{-\lambda_0^+(R_- + R_0^-)}.
\end{align}
\item The third term comes for free.
\begin{align}
    e^{-(\lambda_0^+ (1 + h_-)+ |\lambda_0^-|h_-)R_- - (\lambda_0^+ (1 + h_0^-)h_0^-+ |\lambda_0^-| h_0^-)R_0^-} \ll e^{-\lambda_0^+(R_- + R_0^-)}.
\end{align}
\item The fourth term conditions are the same as the second term.
\end{enumerate}
So we get, for $h_-, h_0^- > 1/2$, and either $|\lambda_0^-| > \lambda_0^+$ or by assuming $|\lambda_0^-| R_0^- > \lambda_0^+ R_-$, we have
\begin{equation}
  \langle \beta'_- \psi_-  , \sigma_0^\tau \rangle \ll \os_{00}.
\end{equation}
Completely analogous arguments give us, for $h_+, h_0^+ > 1/2$, and either $\lambda_1^+ > |\lambda_1^-|$ or by assuming $\lambda_1^+ R_0^+ > |\lambda_1^-| R_+$, we have
\begin{equation}
  \langle \beta'_+ \psi_+^\tau  , \sigma_0^\tau \rangle \ll \os_{00}.
\end{equation}
We are left with the non-linear term whose significant terms are,
\begin{align}
    \mathcal{F}_0^\tau \sim  (\tpsi_0)^2.
\end{align}
Let us first estimate $\|\psi_0\|^2$. Recall from Proposition~\ref{prop:solving for theta}, we have estimates of the form
\begin{align}
    \|\psi_0\| & \lesssim \|\tpsi_+\|_{\supp\beta'_+} + \|\psi_-\|_{\supp\beta'_-} + \|\tu_-\|_{\supp\beta'_-} + \|\tu_+\|_{\supp\beta'_+}\\
    &\lesssim \|\tpsi_+\|e^{-|\lambda_1^-|(h_+ R_+ + h_0^+ R_0^+)} + \|\psi_-\|e^{-\lambda_0^+(h_- R_- + h_0^- R_0^-)}\\
    & \quad + e^{-\lambda_0^+(R_- + h_0^-R_0^-)} +  e^{-|\lambda_1^-|(R_+ + h_0^+R_0^+)}\\
    &\lesssim (\|\psi_0\| + \|\tu_0\|_{\supp \beta'_0 + \supp \beta_+})e^{-|\lambda_1^-|(h_+ R_+ + h_0^+ R_0^+)}\\
    &\quad + (\|\psi_0\| + \|\tu_0\|_{\supp \beta'_0 + \supp \beta_-})e^{-|\lambda_1^-|(h_+ R_+ + h_0^+ R_0^+)} \\
    & \quad + + e^{-\lambda_0^+(R_- + h_0^-R_0^-)} +  e^{-|\lambda_1^-|(R_+ + h_0^+R_0^+)}.
    \end{align}
    The exponential decay estimates for $\psi_+^\tau$ follow analogously to the exponential decay estimates for $\psi_-^\tau$.
    By moving all the $\psi_0$ terms to the left-hand side, we get,
    \begin{align}
    \|\psi_0\| & \lesssim  \|\tu_0\|_{\supp \beta'_0 \cap \supp \beta_+}e^{-|\lambda_1^-|(h_+ R_+ + h_0^+ R_0^+)}\\
    &\quad +  \|\tu_0\|_{\supp \beta'_0 \cap \supp \beta_-}e^{-|\lambda_1^-|(h_+ R_+ + h_0^+ R_0^+)} \\
    & \quad + + e^{-\lambda_0^+(R_- + h_0^-R_0^-)} +  e^{-|\lambda_1^-|(R_+ + h_0^+R_0^+)}\\
    & \lesssim e^{-\lambda_1^+ (R_1^+ + h_+R_+) -|\lambda_1^-|(h_+ R_+ + h_0^+ R_0^+)}\\
    & \quad + e^{-|\lambda_0^-| (R_0^- + h_-R_-) -|\lambda_1^-|(h_+ R_+ + h_0^+ R_0^+)}\\
    & \quad +e^{-\lambda_0^+ (R_- + h_0^-R_0^-)} + e^{-|\lambda_1^-| (R_+ + h_0^+R_0^+)}.
\end{align}
So we again compare
\begin{align}
    \|\psi_0\|^2 & \lesssim e^{-2\lambda_1^+ (R_1^+ + h_+R_+) -2|\lambda_1^-|(h_+ R_+ + h_0^+ R_0^+)}\\
    & \quad + e^{-2|\lambda_0^-| (R_0^- + h_-R_-) -2|\lambda_1^-|(h_+ R_+ + h_0^+ R_0^+)}\\
    & \quad +e^{-2\lambda_0^+ (R_- + h_0^-R_0^-)} + e^{-2|\lambda_1^-| (R_+ + h_0^+R_0^+)},
\end{align}
with the bound on $\langle \mathcal{E}_0, \sigma_0 \rangle$ from Inquality~\ref{ineq: norm estimate on linearized section} and 
see that if $h_0^\pm > 1/2$ and $h_\pm > 1/2$, then
\begin{align}
    \|\psi_0\|^2 \ll \os_{00}.
\end{align}
With all of the above terms,
\begin{align}
    \langle \mathcal{F}_0^\tau, \sigma_0^\tau \rangle \ll \os_{00}.
\end{align}
This concludes the proof of Lemma~\ref{lem: error term is C0 small}.
\footnote{ Without assumption \ref{assumption on setup} for nonlinear terms we would need also 
to estimate terms of the form $\langle \psi_-\tpsi_0, \sigma^\tau_0\rangle_{\supp \beta_-}$. We can proceed by noticing that 
\begin{align}
    \langle \psi_-\tpsi_0, \sigma^\tau_0\rangle_{\supp \beta_-} \lesssim \langle \psi_-, \sigma^\tau_0\rangle_{\supp \beta_-},
\end{align}
and use our previous estimates.

}

Putting these together we get for $h_0^\pm > 1/2$ and $h_\pm > 1/2$, and either $\lambda_1^+ > |\lambda_1^-|$ or by assuming $|\lambda_0^-| R_0^- > \lambda_0^+ R_-$, and $\lambda_1^+ R_0^+ > |\lambda_1^-| R_+$,

 \end{proof}

 \begin{rem}\label{rem:nonlinear_metric}
 We have seen in the above to get the appropriate $C^0$ estimates, we needed exponential decay estimates such as Proposition \ref{prop:autonomous}. The proof of the proposition relied on finding regions in the domain where $D_-\psi_-=0$. The existence of such regions, in turn, is a consequence of Assumption \ref{assumption on setup}. Without this assumption, if we constructed $u_\#$ with any naive pregluing, in the proof of proposition \ref{prop:autonomous} we would instead seen the equation
 \[
 D_-\psi_- +\mathcal{F}(\psi_-,\psi_0^\tau)=E(s)
 \]
for $s\in [1+R_--\gamma h_0^-R_-,1+R_++h_0^-R_0^-]$, where $E(s)$ is a function of $s$ and $\mathcal{F}$ is a quadratic function of its inputs. Since we will have $E(s)$ as a source term, the vector field $\psi_-$ simply will not undergo exponential decay in this region. 

If we don't impose Assumption \ref{assumption on setup}, we still expect to be able to remedy the situation as follows, we first construct the naive preluing $u_\#$, then we perturb it over the region $s\in [1+R_--\gamma h_0^-R_-,1+R_++h_0^-R_0^-]$,  so that it actually becomes a gradient flow segment for the region. We call the perturbed pregluing $\tilde{u}_\#$. Then, we perturb $\tilde{u}_\#$ using vector fields $\psi_*$, and the exponential decay estimates go through as before. The technique for construction $\tilde{u}_\#$ is present in the proof of surjectivity of gluing, in Lemma~\ref{lem: lemma for surjectivity}. In essence, this lemma explains how to construct a finite gradient segment near the critical point (from a segment that almost satisfies the gradient flow equations up to a small error) subject to boundary conditions. Naturally, one needs to be careful about the errors incurred in this process.
 \end{rem}

\subsection{$C^1$-estimates}\label{sec: C1 estimates}
In this section, we show that the obstruction section has the same number of zeros as the linearized obstruction section by showing they are ``$C^1$ close'' to each other.

We recall that we think of $\mathfrak{s}$ taking place in the domain $u_0$, corresponding to the cokernel associated to the equation $\Theta_0$. The obstruction section consists of the $L^2$-pairing of $\sigma$ with the term
\begin{align}
&\beta_-'(s+R_0^-+R_-+2)(u_-(s+R_0^-+R_-+2) +\psi_-(s+R_0^-+R_-+2)) \\
&\quad + \beta_+'(s-(R_0^++R_+))(u_+(s-(R_0^++R^++2))+\psi_+(s-R_0^+-R^++2)) \\
&\quad + Q_0(\psi_0). 
\end{align}

We first recall the setup for taking the derivative of the obstruction section. Recall that, even though we started with pregluing parameters $R_0^\pm$ and $R_\pm$, we set $R_\pm = R_0^\pm/A$ and $R_0^- + R_0^+ = R_0$. We take our independent variables to be $(R_0^-,R_0^+)$. 
 We now explain how to take the derivative of the obstruction section with respect to $R_0^-$, the case for $R_0^+$ is analogous.

The derivative of the linearized obstruction section is directly computable and analyzed in the proof of Theorem \ref{thm: gluing}. The difference 
$\mathfrak{s}-\os_0$ contains many terms that implicitly depend on $R_0^-$; the main terms of concern for us will be how the vector fields $\psi_\pm$ contribute to the nonlinear portion of the obstruction section. We want to show that these contributions are small compared to the terms that show up in the derivative of $\mathfrak{s}_0$.

For most of this section, we examine the $R_0^-$  derivatives of terms $\beta_\pm \psi_\pm$ as they appear in $\os-\os_0$, which are the most difficult to estimate. The same methodology from the previous section applies here as well: we iteratively improve estimates for the $R_0^-$-derivatives of $\psi_\pm$ by identifying regions where various vector fields exhibit exponential decay.

Let us focus on $\psi_-$ for simplicity. Similar considerations will apply to $\psi_+$. Note we already have estimates for the Sobolev norm of $\psi_-$ and its $R_0^-$-derivative from Proposition~\ref{prop: solving psi + and psi - as functions of psi 0}, but we find they are still too large to help us understand the $C^1$ behaviour of $\mathfrak{s}$. As before for the $C^0$-estimates, we will find that the portion of $\psi_-$ and its $R_0^-$-derivative that contributes to $\mathfrak{s}$ is substantially smaller than the norm estimates achieved in Propositions~\ref{prop: solving psi + and psi - as functions of psi 0}. We achieve this by first deriving an exponential decay property of $\frac{d\psi_-}{dR_0^-}$ for $s$ sufficiently large. We then improve the Sobolev norm estimates on $\frac{d\psi_0}{dR_0^-}$ to further improve the Sobolev norm of $\frac{d \psi_0}{dR_0^-}$.

We recall that $\psi_-$ satisfies an equation of the form 
\begin{align}
\Theta_- & = D_-\psi_- +\beta_0' \psi_0(s-(2+R_- -R_0^-)) +\beta_0' u_0^{R_-+R_0^-} + Q_-(\psi_-),
\end{align}
from which we derived norm estimates of the form 
 \[
 \left \| \frac{d\psi_-}{dR_0^-}\right\|\lesssim \frac{1}{R_-}\left(\|\tpsi_0\|_{\supp \beta'_0 \cap \supp \beta_\pm} + \|\tu_0\|_{\supp \beta'_0 \cap \supp \beta_\pm} + \left\|\frac{d\tpsi_0}{dR_0^- }\right\|\right).
 \]
We note this is slightly different from the form in the middle of Proposition~\ref{prop: solving psi + and psi - as functions of psi 0}, since during Proposition~\ref{prop: solving psi + and psi - as functions of psi 0} we have $\psi_-(R_*,\psi_0)$ and only took the partial derivative with respect to the first factor.

We now substantially improve the estimated norm on the part of $ \left \| \frac{d\psi_-}{dR_0^-}\right\|$ that appears in the obstruction section $\mathfrak{s}$. The principle is the same as the improved norm estimates of Section \ref{sec: C0 closeness of linearized section}, where we notice away from the support of $\beta_0'$, the vector field $\frac{d\psi_-}{dR_0^-}$ satisfies a differential equation that forces it to have exponential decay.
\begin{lem}
Let $s$ denote the coordinate in the domain of $u_-$, for $s>s_0 = 1+R_--h_0^-R_-$
\begin{align}
\left|\frac{d}{dR_0^-} \psi_-(s)\right | \leq \left|\frac{d}{dR_0^-}\psi_-\right |(s_0) e^{-|\lambda_0^+(s-s_0)|}.
\end{align}
\end{lem}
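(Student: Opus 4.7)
\emph{Strategy.} The plan is to mirror the proof of Proposition~\ref{prop:autonomous}: for $s > s_0$ the cutoff $\beta_0'$ vanishes by Definition~\ref{defn: cutoff functions}, so the equation $\Theta_-(\psi_-,\psi_0^\tau)=0$ reduces to
\begin{align*}
D_-\psi_- + Q_-(\psi_-) = 0,
\end{align*}
which contains only objects intrinsic to the fixed flowline $u_-$, with no explicit dependence on $R_0^-$. Differentiating in $R_0^-$, using the smoothness established in Proposition~\ref{prop:solving for theta}(4), yields a \emph{homogeneous} linear equation for $v := \frac{d\psi_-}{dR_0^-}$,
\begin{align*}
D_- v + DQ_-(\psi_-)[v] = 0 \qquad \text{for } s > s_0,
\end{align*}
where $DQ_-(\psi_-)$ is the Fr\'echet derivative of $Q_-$ at $\psi_-$.

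\emph{Reduction to an asymptotically autonomous ODE.} In the Morse chart around $x_0$, which contains the region $s > s_0$ once $R_-$ is sufficiently large, $D_-$ takes the form $\tfrac{d}{ds}+A$ with $A$ the constant Hessian of $f$ at $x_0$. From the factorisation $Q_-(\psi)=q(\psi)q_2(\psi)$ with $q_2(0)=Dq_2(0)=0$ (Equation~\ref{eqn: form of Q} and the surrounding discussion), one has $|DQ_-(\psi_-)(s)| \lesssim |\psi_-(s)|$ pointwise; combined with Proposition~\ref{prop:autonomous}, this gives $|DQ_-(\psi_-)(s)| \lesssim e^{-\lambda_0^+(s-s_0)}$. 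Hence $v$ satisfies
\begin{align*}
v' + (A + B(s))\, v = 0, \qquad |B(s)| \lesssim e^{-\lambda_0^+(s-s_0)},
\end{align*}
on $[s_0,\infty)$, an asymptotically autonomous linear system with integrable perturbation $B$.

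\emph{Exponential decay.} By Proposition~\ref{prop: solving psi + and psi - as functions of psi 0}(4), $v$ lies in $W^{1,2}(u_-^* TM)$, hence $v(s)\to 0$ as $s\to\infty$. This forces the component of $v$ along the unstable eigenspace of $A$ to vanish; the surviving stable components each decay at rate at least $\lambda_0^+$, the smallest positive eigenvalue of $A$, which is the claimed bound. The main technical point is controlling the perturbation $B(s)$: under a small, exponentially decaying perturbation the stable/unstable splitting of $A$ persists and the leading exponential rate is preserved. This is exactly how Proposition~\ref{prop:autonomous} is itself justified via Proposition~\ref{Prop:decay_of_kernel}, and can be made explicit either by a Picard iteration for the stable projection or by a Gronwall estimate after projecting onto the stable subspace of $A$, with the contribution of $B$ absorbed as a lower-order correction to the exponential.
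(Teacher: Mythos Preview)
Your strategy is sound but misses the key simplification the paper exploits. Under Assumption~\ref{assumption on setup} the metric is Euclidean and $f$ is quadratic on the Morse neighbourhood of $x_0$, so $\nabla f$ is \emph{linear} there and the quadratic remainder $Q_-$ vanishes identically on $s>s_0$. Thus $\Theta_-=0$ reduces not to $D_-\psi_- + Q_-(\psi_-)=0$ but to the genuinely linear equation $D_-\psi_-=0$, as already used in the proof of Proposition~\ref{prop:autonomous}. Differentiating in $R_0^-$ then gives $D_-\bigl(\tfrac{d\psi_-}{dR_0^-}\bigr)=0$ on the nose, and Proposition~\ref{Prop:decay_of_kernel} yields the stated inequality with no constant and no perturbation argument needed. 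That is the paper's proof in its entirety.

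Your asymptotically-autonomous route (keeping $B(s)=DQ_-(\psi_-)$ and invoking a Gronwall/Picard argument) is not wrong---in fact it sketches how one would proceed \emph{without} Assumption~\ref{assumption on setup}, in the spirit of Remark~\ref{rem:nonlinear_metric}---but it is considerably more work than required here, and as written it would only deliver $|v(s)|\lesssim |v(s_0)|e^{-\lambda_0^+(s-s_0)}$ rather than the sharp constant-free bound in the statement. Since in the present setup $B(s)\equiv 0$, the extra machinery is superfluous.
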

\begin{proof}
Due to our assumptions on the Morse function and the metric, away from the support of $\beta_0'$,
the equation
\begin{align}
\Theta_- & = D_-\psi_- +\beta_0' \psi_0(s-(2+R_- -R_0^-)) +\beta_0' u_0^{R_-+R_0^-} + Q_-(\psi_-) = 0,
\end{align}
  reduces to the linear equation $D_-\psi_- =0$. We may differentiate it with respect to $R_0^-$ to obtain 
\[
D_- \frac{d}{dR_0^-} \psi_0=0
\]
from which the exponential decay properties follow.
\end{proof}

In order to get the best bounds on $|d\psi_-(s)/dR_0^- |$ for $s>s_0$, we need an estimate on $|d\psi_-/dR_0^-|(s_0)$. This comes estimating  $\left \|d\psi_-/dR_0^-\right \|$. As we observed, this is upper bounded in part by the Sobolev norm of $d\psi_0/dR_0^-$, constrained to the part where the term $d\psi_-/dR_0^-$ appears in the equation $\Theta_-=0$. Our next step is to improve this term by using additional exponential-decay estimates for $d\psi_0/dR_0^-$.
To this end, examine the section over the middle segment. 

\begin{prop} \label{prop:decay_R_0}
Consider $s$ the variable the domain of $u_0$, for
For $s<s_0 = R_-+h_-R_0^--(2+R_-+R_0^-)$, we have the exponential decay estimates
\[
\left | \frac{d\psi_0}{dR_0^-}(s) \right | \leq \left | \frac{d\psi_0}{dR_0^-}(s_0) \right | e^{-|\lambda_0^- (s-s_0)|}
\]
\end{prop}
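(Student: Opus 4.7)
The plan is to mirror the proof of Proposition~\ref{prop:autonomous}: identify an interval on the left end of $u_0$'s domain where the equation governing $\psi_0$ (and hence its $R_0^-$-derivative) reduces to the linear homogeneous equation $D_0 Y = 0$, and then invoke Proposition~\ref{Prop:decay_of_kernel} at the negative end of $u_0$ to obtain the stated exponential decay.

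First I would verify that for $s < s_0$ in the domain of $u_0$, the middle equation $\Theta_0 = 0$ reduces to $D_0 \psi_0 = 0$. The threshold $s_0$ is chosen exactly so that, when translated into the $u_0$-coordinate, the support of $\beta_-^{\tau\prime}$ lies strictly to the right of $s_0$ (and the support of $\beta_+^{\tau\prime}$ is harmlessly far off near the right end of $u_0$); consequently both source terms $\beta_-^{\tau\prime}(u_- + \psi_-)$ and $\beta_+^{\tau\prime}(u_+ + \psi_+)$ vanish identically on $(-\infty, s_0)$. Moreover, $u_0(s) \in U_-$ throughout this interval, so by Assumption~\ref{assumption on setup} the exponential map is simply vector addition and the gradient flow equation is exactly linear in the Morse chart. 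The Taylor remainder $Q_0(\psi_0)$ therefore vanishes identically on $(-\infty, s_0)$, leaving $D_0 \psi_0 \equiv 0$ on that interval.

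Next I would differentiate this identity in $R_0^-$. The operator $D_0$ depends only on the gradient flowline $u_0$, which is fixed independently of every pregluing parameter; the region of autonomy $(-\infty, s_0(R_0^-))$ is open and any $s < s_0(R_0^-)$ remains so under sufficiently small perturbations of $R_0^-$. Smoothness of $\psi_0$ in $R_0^-$, already established in Proposition~\ref{prop:solving for theta}(4), then allows differentiation under $D_0$, giving $D_0 \bigl( d\psi_0 / dR_0^- \bigr) = 0$ pointwise on $(-\infty, s_0)$.

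Finally I would invoke the negative-end analogue of Proposition~\ref{Prop:decay_of_kernel}. In the Morse chart $U_-$ the operator takes the form $D_0 = d/ds + \hess_{x_0}\! f$, so every kernel element decomposes as $Y(s) = \sum_j c_j v_j e^{-\lambda_0^{(j)} s}$ along eigenvectors of the Hessian. Because $d\psi_0/dR_0^- \in W^{1,2}$ (a consequence of Proposition~\ref{prop:solving for theta}(4)) it vanishes at $-\infty$, so only modes with $\lambda_0^{(j)} < 0$ can appear, and the slowest-decaying such mode has rate $|\lambda_0^-|$. Comparing values at $s$ and $s_0$ yields the pointwise bound $|d\psi_0/dR_0^-(s)| \leq |d\psi_0/dR_0^-(s_0)|\, e^{-|\lambda_0^-(s-s_0)|}$ as claimed. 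The main technical point — the only part that is not a direct transcription of Proposition~\ref{prop:autonomous} — is the legitimacy of differentiating in $R_0^-$, since $R_0^-$ enters the cutoff functions and hence defines the very region of autonomy; this is handled cleanly by the smoothness assertion of Proposition~\ref{prop:solving for theta}(4), but deserves to be written out carefully.
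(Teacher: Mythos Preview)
Your proposal is correct and follows essentially the same approach as the paper: identify the region $s<s_0$ where $\Theta_0$ reduces to the autonomous equation $D_0\psi_0=0$, differentiate in $R_0^-$, and invoke Proposition~\ref{Prop:decay_of_kernel} at the negative end. Your write-up supplies more detail than the paper's proof (notably the vanishing of $Q_0$ via Assumption~\ref{assumption on setup} and the justification of differentiation through Proposition~\ref{prop:solving for theta}(4)), but the argument is the same.
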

\begin{proof}
This region corresponds to the region left of where $\beta_-$ becomes identically equal to $1$, refer to Figure~\ref{fig: cutoff functions}.
For this region, the equation $\Theta_0$ reduces to
\[
D_0\psi_0 = 0.
\]
Differentiating with respect to $R_0^-$ produces the required exponential decay estimates as in Proposition~\ref{Prop:decay_of_kernel}.
\end{proof}

We now have all the ingredients necessary to prove the $C^1$-smallness of the term $\beta_-'(s+R_0^-+R_-+2)\psi_-(s+R_0^-+R_-+2)$ as it appears in $\mathfrak{s}-\os_{0}$. 

\begin{prop}
Consider $\left\langle\sigma, \beta_-^{\tau \prime} \frac{d\psi_-}{dR_0^-} \right\rangle$ that appears in the nonlinear obstruction section. We have
\[
\left\langle\sigma, \beta_-^{\tau \prime}\frac{d\psi_-}{dR_0^-} \right\rangle \ll e^{-\lambda_0^+(R_- + R_0^-)} + e^{-|\lambda_1^-|(R_0^+ + R_+)}.
\]

\end{prop}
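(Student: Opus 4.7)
The plan is to adapt the iterative improvement scheme used in Section~\ref{sec: C0 closeness of linearized section} for $\os - \os_0$ to control the $R_0^-$-derivative of $\beta_-^{\tau\prime}\psi_-$ paired against $\sigma_0$. First I would apply Cauchy--Schwarz to split the inner product into two factors: the pointwise size of $\sigma_0$ on $\supp \beta_-^{\tau\prime}$, and the $L^2$-norm of $d\psi_-/dR_0^-$ restricted to the same support. The first factor is controlled directly by Equation~\ref{eqn: expansion of sigma 0}: on $\supp\beta_-^{\tau\prime}$, which sits a distance $\approx R_-+R_0^-$ from the critical point $x_0$ in the domain of $u_0^\tau$, the cokernel element decays like $e^{-\lambda_0^+(R_-+R_0^-)}$. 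This matches the order of $\os_{00}$, so everything hinges on producing strict exponential smallness in the second factor.

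Next I would extract that extra decay via the autonomous-equation trick. In the region $s > s_0 := 1 + R_- - h_0^- R_-$ in the domain of $u_-$, the equation $\Theta_- = 0$ reduces to $D_-\psi_- = 0$, since $\beta_0'$ vanishes there and the quadratic term $Q_-$ can be absorbed for small $\psi_-$; differentiating in $R_0^-$ gives $D_-(d\psi_-/dR_0^-) = 0$ on the same region. Proposition~\ref{Prop:decay_of_kernel} then delivers the pointwise estimate $\bigl|d\psi_-/dR_0^-\bigr|(s) \le \bigl|d\psi_-/dR_0^-\bigr|(s_0)\, e^{-\lambda_0^+(s - s_0)}$. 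Since $\supp\beta_-^{\tau\prime}$ lies to the right of $s = 1 + R_- + h_0^-R_0^-$, i.e.\ at distance at least $h_0^-(R_- + R_0^-)$ from $s_0$, this gives an additional factor of $e^{-\lambda_0^+ h_0^-(R_- + R_0^-)}$ times the global Sobolev bound.

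To control $\|d\psi_-/dR_0^-\|$ itself, I would combine Proposition~\ref{prop: solving psi + and psi - as functions of psi 0}(4) with Proposition~\ref{prop:solving for theta}(4); these bound $\|d\psi_-/dR_0^-\|$ in terms of $\|\psi_0^\tau\|_{\supp\beta_0'\cap\supp\beta_-}$, $\|u_0^\tau\|_{\supp\beta_0'\cap\supp\beta_-}$, and $\|d\psi_0^\tau/dR_0^-\|_{\supp\beta_0'\cap\supp\beta_-}$. The first two already carry the improved exponential decay derived during the $C^0$-analysis; Proposition~\ref{prop:decay_R_0} upgrades $\|d\psi_0/dR_0^-\|$ on the required slice by an extra factor $e^{-|\lambda_0^-|(h_-R_- + h_0^-R_0^-)}$. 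Assembling these ingredients and choosing $h_\pm, h_0^\pm > 1/2$, together with the eigenvalue-comparison conditions already used in Lemma~\ref{lem: error term is C0 small}, yields
\[
\left\langle \sigma_0,\, \beta_-^{\tau\prime}\,\tfrac{d\psi_-}{dR_0^-}\right\rangle \lesssim e^{-\lambda_0^+(R_- + R_0^-)}\cdot e^{-a(R_- + R_0^-)}
\]
for some $a > 0$ depending on the eigenvalues at $x_0$ and $x_1$; this is exactly the ``$\ll$'' smallness required.

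The main obstacle I anticipate is the bookkeeping in the last step: the Sobolev-norm bounds on $d\psi_\pm/dR_0^-$ and $d\psi_0/dR_0^-$ depend on one another through Propositions~\ref{prop: solving psi + and psi - as functions of psi 0}(4) and \ref{prop:solving for theta}(4), so one must verify that the fixed-point style iteration closes up non-circularly and that each invocation of ``improved'' decay only uses quantities already bounded at an earlier stage, exactly as in the iterative scheme that produced Lemma~\ref{lem: error term is C0 small}. In particular, the analogue of Proposition~\ref{prop:decay_R_0} on $\supp \beta_0'\cap\supp\beta_+$ -- needed when estimating $d\psi_0/dR_0^-$ on that slice -- must be stated and proved by the same autonomous-equation argument to avoid any hidden circular dependence on $d\psi_+/dR_0^-$.
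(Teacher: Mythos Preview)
Your proposal is correct and follows essentially the same route as the paper: both arguments combine the exponential decay of $d\psi_-/dR_0^-$ in the autonomous region (obtained by differentiating $D_-\psi_-=0$ in $R_0^-$), the improved localized bound on $d\psi_0/dR_0^-$ from Proposition~\ref{prop:decay_R_0}, and the decay of $\sigma_0$, then conclude via the same term-by-term comparison with $\os_{00}$ as in Lemma~\ref{lem: error term is C0 small}. One small correction: in the autonomous region the quadratic term $Q_-$ is not merely ``absorbed'' but vanishes identically by Assumption~\ref{assumption on setup} (the metric is Euclidean there, so the gradient flow equation is linear), which is what makes $D_-(d\psi_-/dR_0^-)=0$ hold exactly.
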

\begin{proof}
As in Section \ref{sec: C0 closeness of linearized section}, we begin by combining the estimates 
\begin{align}
            \left \| \frac{d\psi_0}{dR_*}\right \| &\lesssim (R_0^-)^{-1}\left(\|u_-\|_{\supp \beta_-'} + \|\psi_-\|_{\supp \beta_-'} + \left \|\frac{d\psi_-}{dR_*} \right\|_{\supp \beta_-'}\right) \\
            & + (R_0^+)^{-1}\left(\|u^\tau_+\|_{\supp \beta_+'} + \|\tpsi_+\|_{\supp \beta_+'}+\left \|\frac{d\psi_+^\tau}{dR_*} \right\|_{\supp\beta_+'}\right).
\end{align}
and 

 \[
 \left \| \frac{d\psi_-}{dR_0^-}\right\|\lesssim \frac{1}{R_-}\left(\|\tpsi_0\|_{\supp \beta'_0 \cap \supp \beta_-} + \|\tu_0\|_{\supp \beta'_0 \cap \supp \beta_-} + \left\|\frac{d\tpsi_0}{dR_0^- }\right\|_{\supp\beta_0'\cap \supp \beta_-}\right).
 \]
to get
\begin{align}
    \left\|\frac{d\psi_-}{dR_0^-}\right\| & \lesssim   \|\tu_0\|_{\supp \beta'_- \cap \supp \beta_0} + \|\tpsi_0\|_{\supp \beta'_- \cap \supp \beta_0}+\left\|\frac{d\psi_0^\tau}{dR_0^-}\right\|_{\supp \beta'_- \cap \supp \beta_0}\\
    &\lesssim  \|\tu_0\|_{\supp \beta'_- \cap \supp \beta_0} +  \left(\|\tpsi_0\|+\left\|\frac{d\psi_0^\tau}{dR_0^-}\right\|\right)e^{-|\lambda_0^-|(h_-R_- + h_0^- R_0^-)}\\
    &\lesssim \|\tu_0\|_{\supp \beta'_- \cap \supp \beta_0} + \bigg(\|\tu_0\|_{\supp \beta'_+ \cap \supp \beta_0}  \\
    &\quad + \|u_-\|_{\supp \beta_-'} + \|\tu_+\|_{\supp \beta_+'}\bigg)e^{-|\lambda_0^-|(h_-R_- + h_0^- R_0^-)}\\
    & \lesssim e^{-|\lambda_0^-|(R_0^- + h_-R_-)} + e^{-\lambda_1^+(R_0^+ + h_+R_+)-|\lambda_0^-|(h_-R_- + h_0^- R_0^-)}\\
    & \quad + e^{-(\lambda_0^+ +  |\lambda_0^-| h_-)R_- - (\lambda_0^+  + |\lambda_0^-| )h_0^-R_0^-} \\
    & \quad + e^{-|\lambda_1^-|(R_+ + h_0^+R_0^+) -|\lambda_0^-|(h_-R_- + h_0^- R_0^-)}.\\
\end{align}

In the second line above, we used Proposition \ref{prop:decay_R_0}. Next,
in the same way as Lemma~\ref{lem: error term is C0 small}, we combine the exponential decay of $\frac{d\psi_-}{dR_0^-}$ (recall this is a vector field appropriately translated to be viewed in the domain of $u_0$, where we have suppressed the translation as in Equation \ref{eqn:simplified_form}) and the exponential decay of $\sigma$ to obtain:

\begin{align}
    \left\langle \sigma_0, \beta^{\tau \prime}_-\frac{d\psi_-}{dR_-^0}\right\rangle 
    &\lesssim \left\|\frac{d\psi_-}{dR_0^-} \right\|e^{-\lambda_0^+(R_0^- + h_-R_-)}\\
    & \lesssim e^{-(|\lambda_0^-| + \lambda_0^+)(R_0^- + h_-R_-)}\\
    & \quad + e^{-\lambda_1^+(R_0^+ + h_+R_+)-(|\lambda_0^-|h_0^- + \lambda_0^+)R_0^- - (|\lambda_0^-| + \lambda_0^+)h_-R_-}\\
    & \quad + e^{-(\lambda_0^+ (1 + h_-)+ |\lambda_0^-|h_-)R_- - (\lambda_0^+ (1 + h_0^-)h_0^-+ |\lambda_0^-| h_0^-)R_0^-} \\
    & \quad + e^{-|\lambda_1^-|(R_+ + h_0^+R_0^+) -(|\lambda_0^-|h_0^- + \lambda_0^+)R_0^- - (|\lambda_0^-| + \lambda_0^+)h_-R_-}.
\end{align}

Comparing with the exponents of the linearized section and following the recipe in the proof of Lemma~\ref{lem: error term is C0 small},
this concludes the lemma.
\end{proof}

The upshot of the above proposition is that whatever upper bounds we derived for $\psi_-$, they also hold (up to a constant or a factor of $1/R_*$) for the $R_0^-$-derivative of $\psi_-$. We note immediately that an analogous statement holds for estimating the $R_0^-$-derivative of $\psi_+$ as it appears in $\mathfrak{s}$.

An analogous computation to Proposition~\ref{lem: error term is C0 small} gives the following.
\begin{prop}\label{lem: linearized obs section is C1 close}
The nonlinear obstruction section $\mathfrak{s}$ is $C^1$-close to $\mathfrak{s}_0$. By this, we mean that
\[
\left | \frac{d}{dR_0^-}\langle \mathfrak{s}-\mathfrak{s}_0,\sigma\rangle\right | \ll e^{-\lambda_0^+(R_- + R_0^-)} + e^{-|\lambda_1^-|(R_0^+ + R_+)}.
\]
\end{prop}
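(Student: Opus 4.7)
The plan is to extend the strategy from Lemma~\ref{lem: error term is C0 small} by differentiating each term of $\mathfrak{s}-\mathfrak{s}_0$ in $R_0^-$ and verifying the derivative is exponentially small in the sense made precise in Step 2 of the proof of Theorem~\ref{thm: gluing}. Writing out the difference explicitly, one must control
\[
\frac{d}{dR_0^-}\bigl\langle \sigma_0,\beta_-^{\tau\prime}\psi_- + \beta_+^{\tau\prime}\psi_+ + Q_0(\psi_0)\bigr\rangle,
\]
in which the translates in $\beta_\pm^{\tau\prime}$ also depend on $R_0^-$. The linear-in-$\psi_\pm$ contributions $\langle \sigma_0,\beta_\pm^{\tau\prime}\,d\psi_\pm/dR_0^-\rangle$ are the subject of the previous proposition and are already shown to be $\ll \mathfrak{s}_{00}$. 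It therefore remains to handle (a) the terms arising from differentiating the translated cutoffs $\beta_\pm^{\tau\prime}$ themselves in $R_0^-$, and (b) the fully nonlinear term coming from $dQ_0(\psi_0)/dR_0^-$.

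For (a), the $R_0^-$-derivative of $\beta_\pm^{\tau\prime}$ is again supported in the same narrow window as $\beta_\pm^{\tau\prime}$ and is of the same size up to an additional $1/R_\pm$ factor; consequently the pairings $\langle \sigma_0,(d\beta_\pm^{\tau\prime}/dR_0^-)\psi_\pm\rangle$ obey the same exponential comparison with $\mathfrak{s}_{00}$ as the terms $\langle \sigma_0,\beta_\pm^{\tau\prime}\psi_\pm\rangle$ already controlled in Lemma~\ref{lem: error term is C0 small}. For (b), using the factorization $Q_0(\psi_0)=q(\psi_0)q_2(\psi_0)$ from Equation~\ref{eqn: form of Q}, where $q_2$ and its first derivative both vanish at the origin, the chain rule gives the pointwise bound
\[
\left|\frac{d}{dR_0^-}Q_0(\psi_0)\right| \lesssim |\psi_0|\left|\frac{d\psi_0}{dR_0^-}\right|,
\]
so its pairing with $\sigma_0$ is controlled by $\|\psi_0\|\,\|d\psi_0/dR_0^-\|$. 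Proposition~\ref{prop:solving for theta}(4), combined with the improved $L^2$-decay estimates for $d\psi_\pm/dR_0^-$ established in the preceding proposition via the autonomous-region argument of Proposition~\ref{prop:decay_R_0}, shows that $\|d\psi_0/dR_0^-\|$ satisfies exactly the same collection of exponential bounds as was proven for $\|\psi_0\|$ in Lemma~\ref{lem: error term is C0 small}. The product is therefore strictly smaller than $\|\psi_0\|^2$, which is already $\ll \mathfrak{s}_{00}$.

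The main obstacle is purely bookkeeping: one must verify that every time a derivative is taken with respect to $R_0^-$, the exponents accumulating in the iterated bootstrap of autonomous ODE decay (namely Propositions~\ref{prop:autonomous} and~\ref{prop:decay_R_0}, applied to successive differentiations of $\Theta_-=0$ and $\Theta_0=0$) remain strictly larger than the exponents $\lambda_0^+(R_-+R_0^-)$ and $|\lambda_1^-|(R_0^++R_+)$ controlling $\mathfrak{s}_{00}$. This is precisely what the asymmetric pregluing profiles with $h_\pm,h_0^\pm>1/2$ and the choice $R_\pm=R_0^\pm/A$ with $A$ sufficiently large were set up to accommodate in Section~\ref{sec: C0 closeness of linearized section}, so once the derivative estimates have been upgraded the term-by-term exponent comparison from Lemma~\ref{lem: error term is C0 small} transfers verbatim. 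The analogous bound for $d/dR_0^+$ follows by symmetry, and together these complete the $C^1$-closeness statement.
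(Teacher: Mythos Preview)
Your proposal is correct and follows essentially the same approach as the paper's proof: both reduce the problem to showing that $\|d\psi_0/dR_0^-\|$ satisfies the same exponential bounds as $\|\psi_0\|$ (so that the quadratic term from $Q_0$ is controlled exactly as in Lemma~\ref{lem: error term is C0 small}), after the linear contributions $\langle\sigma_0,\beta_\pm^{\tau\prime}\,d\psi_\pm/dR_0^-\rangle$ have been handled by the preceding proposition. You are in fact slightly more explicit than the paper in separating out the terms (a) arising from differentiating the translated cutoffs $\beta_\pm^{\tau\prime}$ themselves, which the paper absorbs into ``the rest of the terms'' without comment; your observation that these obey the same bounds as $\langle\sigma_0,\beta_\pm^{\tau\prime}\psi_\pm\rangle$ up to harmless $1/R_\pm$ factors is correct.
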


\begin{proof}
With the terms $\beta_\pm^{\tau \prime} \frac{d\psi_\pm}{dR_0^-}$ taken care of, the rest of the terms are bounded in a similar fashion as in Proposition~\ref{lem: error term is C0 small}: the remaining terms are quadratic in $\psi_0$ and their $R_0^-$-derivatives. We observe after chasing through some inequalities
\begin{align}
\left \| \frac{d\psi_0}{dR_0^-}\right \|^2 & \lesssim e^{-2\lambda_1^+ (R_1^+ + h_+R_+) -2|\lambda_1^-|(h_+ R_+ + h_0^+ R_0^+)}\\
    & \quad + e^{-2|\lambda_0^-| (R_0^- + h_-R_-) -2|\lambda_1^-|(h_+ R_+ + h_0^+ R_0^+)}\\
    & \quad +e^{-2\lambda_0^+ (R_- + h_0^-R_0^-)} + e^{-2|\lambda_1^-| (R_+ + h_0^+R_0^+)}, 
\end{align}
which is the same bound as $\|\psi_0\|^2$ in Proposition~\ref{lem: error term is C0 small}. Hence, we conclude as in Proposition~\ref{lem: error term is C0 small}.
\end{proof}

\subsection{Injectivity and Surjectivity of the Gluing map}
In this section, we provide proofs of injectivity and surjectivity of the gluing map, which go into the proof of Theorem~\ref{thm:gluing map is homeo}.
\begin{lem}[Injectivity of the Gluing map, Section 7.2 of \cite{Hutchings-Taubes_2009}]\label{lem:injectivity of gluing isomorphism}
    If $r$ is sufficiently large and $\delta > 0$ sufficiently small, the restricted gluing map $G$ (\ref{eqn:definition of gluing isomorphism}) is injective.
\end{lem}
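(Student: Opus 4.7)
The plan is to recover the gluing parameters from reparametrization-invariant geometric data on the flowline, and then upgrade approximate equality to exact equality using the uniqueness built into the contraction-mapping constructions. Suppose $G(R_0^-, R_0^+)$ and $G(\tilde R_0^-, \tilde R_0^+)$ represent the same unparametrized flowline, so there is an $s_0 \in \R$ with $u(R_0^-, R_0^+)(s) = u(\tilde R_0^-, \tilde R_0^+)(s + s_0)$ for all $s$. The goal is to conclude $s_0=0$, $R_0^- = \tilde R_0^-$, and $R_0^+ = \tilde R_0^+$.

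First I would pin down $s_0 = 0$ using the behavior near $-\infty$. On the region $s \leq 1 + R_- - h_-(1+\gamma)R_-$ both preglued curves agree with $u_-$ exactly, so after perturbation they agree with $u_-$ up to an error whose $W^{1,2}$-norm is small by Proposition~\ref{prop: solving psi + and psi - as functions of psi 0}(2). Since $u_-$ is a non-constant gradient flowline and therefore has no nontrivial $\R$-translation symmetry, the only way $u_- \circ \tau_{s_0}$ can lie in a sufficiently small $C^\infty_{\rm loc}$-neighborhood of $u_-$ on this region is $s_0 = 0$, provided $\delta$ is small enough that the perturbations are dominated by $|u_-'(s)|\cdot|s_0|$ on any compact slab where $u_-'$ is bounded away from zero.

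With $s_0 = 0$ the parametrized curves $u(R_0^-, R_0^+)$ and $u(\tilde R_0^-, \tilde R_0^+)$ agree. Next I would extract $R_0^\pm$ via ``time-of-flight'' data: fix small transverse slices $\Sigma_0 \subset U_-$ and $\Sigma_1 \subset U_+$ of the form $\{|p|=\epsilon_*\}$ and $\{|q|=\epsilon_*\}$. Using the asymptotic expansions of $u_-, u_0, u_+$ together with the exponential Sobolev decay of $\psi_-, \psi_0, \psi_+$ established in Section~\ref{sec: C0 closeness of linearized section}, I would show that the dwell times inside the Morse balls satisfy $T^{-}_{\rm out} - T^{-}_{\rm in} = R_0^- + c_- + O(e^{-\lambda R_0^-})$ and analogously $T^{+}_{\rm out} - T^{+}_{\rm in} = R_0^+ + c_+ + O(e^{-\lambda R_0^+})$, where $c_\pm$ depend only on the broken flowline and $\epsilon_*$. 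These dwell times are manifestly reparametrization-invariant, so applying the extraction to both $u(R_0^\pm)$ and $u(\tilde R_0^\pm)$ yields $R_0^- = \tilde R_0^-$ and $R_0^+ = \tilde R_0^+$ up to the exponentially small errors.

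Finally, I would upgrade this near-equality to exact equality using the smooth dependence of $\psi_0, \psi_\pm$ on the gluing parameters (Proposition~\ref{prop:solving for theta}(4)) and the $C^1$-estimates from Section~\ref{sec: C1 estimates}; together these ingredients show that the map $(R_0^-, R_0^+) \mapsto u(R_0^-, R_0^+)$ has injective differential at every point of $U_\delta$, so a small but nonzero parameter gap would produce a small but nontrivial difference of curves, contradicting $s_0 = 0$ and the extracted equality of parameters. The main obstacle is the middle step: producing clean, monotone time-of-flight formulas in the presence of nonzero perturbations $\psi_*$, since one must check that the perturbation contribution to the dwell time is genuinely of lower order than the linear dependence on $R_0^\pm$. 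Fortunately, exactly the exponential decay estimates needed to control $\psi_*$ on the Morse-ball boundaries are already available from Sections~\ref{sec: C0 closeness of linearized section} and \ref{sec: C1 estimates}.
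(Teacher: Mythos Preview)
Your three-step programme is more roundabout than needed, and Step~1 contains a genuine gap: comparing the two glued curves to $u_-$ on the far-left region only tells you that $|s_0|$ is small (bounded by the size of the perturbations $\psi_-$, $\tilde\psi_-$), not that $s_0=0$ exactly. The perturbations $\psi_-$ and $\tilde\psi_-$ are both nonzero and depend on the gluing parameters, so there is no reason the two parametrized curves must agree exactly as perturbations of the \emph{same} translate of $u_-$. Your appeal to ``$|u_-'(s)|\cdot|s_0|$ dominates the perturbations'' gives a bound on $|s_0|$, not its vanishing. If you then carry this nonzero $s_0$ through Steps~2 and~3 you will find you need precisely the quantitative contraction estimate that the paper uses directly.

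The paper bypasses the time-of-flight detour entirely. It fixes a reference point $p_0$ in the image of $u_0$, observes that near $p_0$ the flowline $u_0$ is an immersion so that $\mathrm{dist}\bigl(\exp_{u_0(s)}\psi_0(s),\,\exp_{u_0(\tilde s)}\psi_0(\tilde s)\bigr)\geq c_0|s-\tilde s|$ for small $\psi_0$, and then invokes the exponentially small derivative bounds $\|\partial\psi_0/\partial R_0^\pm\|\lesssim e^{-\Lambda r}$ from Proposition~\ref{prop:solving for theta}(4). Combining these two ingredients at $p_0$ yields $|\Delta R_0^-|\lesssim e^{-\Lambda r}(|\Delta R_0^-|+|\Delta R_0^+|)$; a symmetric argument with a reference point on $u_+$ gives the analogous bound on $|\Delta R_0^+|$. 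Summing forces $\Delta R_0^\pm=0$ once $r$ is large. This is exactly the quantitative ``injective differential'' statement you were gesturing at in your Step~3, but carried out in one stroke: the shift $\Delta R_0^-$ already absorbs the reparametrization freedom, so there is no need to isolate $s_0$ first, and the dwell-time extraction in your Step~2 becomes superfluous. Your time-of-flight idea would give the correct answer in principle, but the perturbations $\psi_*$ affect both the entry and exit times in a way that, once controlled, reproduces the same exponentially small error term---so it does extra work for no gain.
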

\begin{proof}
    We show injectivity by showing that if $r$ is sufficiently large, $\delta > 0$ sufficiently small and $u(R_0^-, R_0^+) \in \tilde G_\delta(u_+, u_0, u_-)$, then $(R_0^-, R_0^+)$ is determined by $u(R_0^-, R_0^+)$. For this, it suffices to prove the following two claims:
    \begin{itemize}
        \item[(i)] If $r$ is sufficiently large and $\delta$ sufficiently small with respect to $r$, then $u(R_0^-, R_0^+) \in \widetilde G_\delta(u_+, u_0, u_+)$ implies $R_0^-, R_0^+ > r$.
        \item[(ii)] For $r$  sufficiently large, if $(R_0^-, R_0^+) \in [r, \infty)^2$ and $u(R_0^-, R_0^+) = u(\tilde R_0^-, \tilde R_0^+)$, then $(R_0^-, R_0^+) = (\tilde R_0^-, \tilde R_0^+)$.
    \end{itemize}
    The proof of (i) more or less follows from the definitions, we have $R_0^\pm\geq C(\frac{1}{\delta} +1)$.

    To see (ii) Choose $p_0$ in the image of $u_0$, and let $B_{\delta_1} (p_0)$ denote a radius $\delta_1$ ball around $p_0$ in $M$. We assume $p_0,\delta$ are chosen $u_0\cap B_\delta(p_0)$ is an interval, which we denote by $B_0$. We further assume that for $\epsilon > 0$ sufficiently small, for any $\psi_0 \in \cB_{\epsilon}$ with $\|\psi_0\|_\infty < \epsilon$ and $\| \nabla \psi_0\|_\infty < \epsilon$, any $\Delta s \in \R$, and any $s, \tilde s \cB_0$,
    \begin{align}\label{eqn: inequaltiy of distance in injectivity proof}
        \mathrm{dist}(\exp_{u_0(s)} (\psi_0 (s)), \exp_{u_0(\tilde s )}  (\psi_0(\tilde s ))) \geq c_0|s-\tilde{s}|
    \end{align}
    for a constant $c_0 > 0$.

     Fix an $r$ such that part (i) is satisfied. Suppose two different pairs of gluing parameters yield the same curve. We let $R_0^-, R_0^+ > r$ and $\tilde{R}_0^-,\tilde{R}_0^->r$ denote the two pairs that produce the same curve. We denote the resulting curve by $u(R_0^-, R_0^+) = u(\tilde{R}_0^-, \tilde{R}_0^+)$. We note these curves are parametrized curves from $\mathbb{R} \rightarrow M$. Let $\tpsi_0$ and $\tilde{\tpsi_0}$ denote sections, respectively from Proposition~\ref{prop:solving for theta} applied to gluing parameters 
    \begin{equation}
    (R_0^-/A, R_0^-,R_0^+, R_0^+/A) \text{ and }(\tilde{R}_0^-/A, \tilde{R}_0^-, \tilde{R}_0^+, \tilde{R}_0^+/A).
    \end{equation} 
    Translate $\tpsi_0$ and $\tilde{\tpsi_0}$ back appropriately to get corresponding $\psi_0$ and $\tilde{\psi}_0$ sections over $u_0$.

    Let $s_0 = u_0^{-1}(p_0)$. Then, as $\exp_{u(s_0)}(\psi_0(s_0))$ is a point on the gluing $u(R_0^-, R_0^+) = u(\tilde R_0^-, \tilde R_0^+)$, then for $\Delta R_0^-  := (R_- + R_0^-) -(\tilde R_- + \tilde R_0^-)$, we have $\tilde{s} = s+ \Delta R_0^- $ with
    \begin{align}\label{eqn: translation equality in injectivity proof}
        \exp_{u_0(s_0)}(\psi_0(s_0)) = \exp_{u_0( s + \Delta R_0^-)}(\tilde\psi_0( s + \Delta R_0^-)).
    \end{align}
     Set $\Delta R_0^+  := (R_0^+ - \tilde R_0^+)$.

    On the other hand, the bounds of the derivatives of $\psi_0$ from Proposition~\ref{prop:solving for theta} imply
    \begin{equation}
    \bigg\|\frac{\partial \psi_0}{ \partial R_0^-}\bigg\| \lesssim e^{-\Lambda r}, \quad \bigg\|\frac{\partial \psi_0}{\partial R_0^+}\bigg\| \lesssim e^{-\Lambda r}
    \end{equation}
    for some $\Lambda>0$. Therefore, 
    \begin{align}
        \mathrm{dist} (\exp_{u_0(s)}(\psi_0(s)), \exp_{u_0(s)}(\tilde \psi_0(s))) \lesssim e^{-\Lambda r}(|\Delta R_0^-| + |\Delta R_0^+|). 
    \end{align}
    Combining the above inequality with \ref{eqn: inequaltiy of distance in injectivity proof} and \ref{eqn: translation equality in injectivity proof}, we get 
    \begin{align}
        |\Delta R_0^-| & \lesssim e^{-\Lambda r}(|\Delta R_0^-| + |\Delta R_0^+|).
    \end{align}
    By a symmetric argument with $p_+ \in \im u_+$ we get
    \begin{align}
       |\Delta R_0^+| & \lesssim e^{- \Lambda r}(|\Delta R_0^-| + |\Delta R_0^+|).
    \end{align}
    This means if $r$ is sufficiently large $\Delta R_0^- = \Delta R_0^+ = 0$, that is,
    $(R_0^-, R_0^+) = (\tilde R_0^-, \tilde R_0^+)$.

\end{proof}

\begin{lem}[Surjectivity of the gluing map, Section 7.3 of \cite{Hutchings-Taubes_2009}]\label{lem:surjectivity of the gluing isomorphism}
    If $r$ is sufficiently large and $\delta > 0$ sufficiently small, the restricted gluing map $G$ (\ref{eqn:definition of gluing isomorphism}) is surjective.
\end{lem}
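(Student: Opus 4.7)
The plan is to reverse-engineer the gluing parameters from a given flowline. Let $v \in G_\delta(u_-, u_0, u_+)$. Definition~\ref{defn:open nbhd around broken u} furnishes a decomposition $v = v_- \star v_0 \star v_+$ together with translation parameters $S_-, S_0^\pm$ and small normal perturbations $\eta_\pm, \eta_0$ of $u_\pm, u_0$. After using the $\R$-reparametrization on the target (this is precisely why we quotient by $\R$), I would normalize the overall translation to match the pregluing convention, and read off candidate parameters $R_0^\pm := S_0^\pm$, with the constraint $R_\pm := R_0^\pm / A$ imposed as in Definition~\ref{defn: obstruction section}. The condition $v \in G_\delta$ and Part~(a) of Theorem~\ref{thm:gluing map is homeo} ensure that $(R_0^-, R_0^+) \in U_\delta$ for $\delta$ small with respect to $r$.

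Next I would show that $v$ is close in $W^{1,2}$ to the pregluing $u_\#(R_0^-, R_0^+)$. On the three subintervals where only one cutoff $\beta_*$ is nonzero, closeness is immediate from Definition~\ref{defn:open nbhd around broken u}; on the two transition intervals, both $v$ and $u_\#$ lie in Morse neighborhoods and are controlled by the exponential asymptotics of Section~4 and the $C^0$-smallness of $\eta_*$. To obtain Sobolev rather than merely pointwise control, I would appeal to the refined pregluing construction outlined in Remark~\ref{rem:nonlinear_metric}, supplied by Lemma~\ref{lem: lemma for surjectivity}, which produces an approximate gradient segment near each critical point satisfying matching boundary conditions. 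With this modification, the vector field $\xi := \exp_{u_\#}^{-1}(v)$ is well-defined along $u_\#$ and satisfies $\|\xi\| < \epsilon$ for $\delta$ small and $r$ large.

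The third step is to decompose $\xi = \beta_- \psi_- + \beta_0 \tpsi_0 + \beta_+ \tpsi_+$ with $\psi_- \in \cH_-$, $\tpsi_0 \in \cH_0^\tau$, $\tpsi_+ \in \cH_+^\tau$. The pointwise decomposition on the supports of $\beta_*$ is straightforward, but enforcing the orthogonality conditions (i.e.\ $L^2$-perpendicularity to the respective kernels) is an auxiliary finite-dimensional problem. I would treat the candidate parameters $(R_0^-, R_0^+)$ together with an $\R$-reparametrization shift as unknowns, and the orthogonality conditions as constraints. The linearization of this system at the initial candidate is, for $r$ large, a small perturbation of an explicit isomorphism between parameter deformations and the leading exponential modes of the kernel elements, exactly the pairings analyzed in Theorem~\ref{thm: gluing}. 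The implicit function theorem then yields a unique adjustment producing parameters $(R_0^-, R_0^+)$ for which the decomposition is orthogonal, and the adjusted $\xi$ is still of size $O(\epsilon)$.

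Finally, since $v$ is a genuine flowline, the triple $(\psi_-, \tpsi_0, \tpsi_+)$ solves the system $\Theta_- = 0$, $\Theta_0^\tau = 0$, $\Theta_+^\tau = 0$ from Section~\ref{sec: equation for deformation to be a flowline}. By the uniqueness of fixed points of the contraction mappings in Propositions~\ref{prop: solving psi + and psi - as functions of psi 0} and~\ref{prop:solving for theta}, this triple must coincide with the one produced by the gluing construction at $(R_0^-, R_0^+)$, so $v = u(R_0^-, R_0^+)$ up to the fixed $\R$-reparametrization. Because $v$ is a flowline, $\Pi F_0(\psi_0) = 0$, i.e.\ $\os(R_0^-, R_0^+) = 0$; hence $(R_0^-, R_0^+) \in \os^{-1}(0) \cap U_\delta$ and $G(R_0^-, R_0^+) = [v]$. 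The main technical obstacle is the Sobolev-level control of $\xi$ on the pregluing necks: the naive pregluing fails to be a true flowline there, and it is here that Lemma~\ref{lem: lemma for surjectivity} and its underlying construction are essential, playing the same role that Assumption~\ref{assumption on setup} played in the $C^0$-estimates of Section~\ref{sec: C0 closeness of linearized section}.
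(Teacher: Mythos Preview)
Your architecture is essentially the paper's, and steps 1--3 are sound: produce candidate parameters from the decomposition of $v$, exhibit $v$ as $\exp_{u_\#}\xi$ for small $\xi$, and adjust $(R_0^-,R_0^+)$ together with a global translation so that the pieces $\psi_-,\tpsi_0,\tpsi_+$ land in $\cH_-,\cH_0^\tau,\cH_+^\tau$. The paper does exactly this, with the orthogonality adjustment carried out as you describe.

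The genuine gap is in Step~4. The implication ``$v$ is a flowline $\Rightarrow$ $(\psi_-,\tpsi_0,\tpsi_+)$ solves $\Theta_-=0$, $\Theta_0^\tau=0$, $\Theta_+^\tau=0$'' is false as stated. What Section~\ref{sec: equation for deformation to be a flowline} gives is only the \emph{combined} equation
\[
\beta_-\Theta_- + \beta_0\Theta_0^\tau + \beta_+\Theta_+^\tau = 0.
\]
On the complement of the two overlap intervals this does collapse to a single $\Theta_*=0$, but on each overlap (say $I_0=\supp\beta_-\cap\supp\beta_0$) both $\beta_-$ and $\beta_0$ are nonzero and you cannot conclude $\Theta_-=0$ and $\Theta_0^\tau=0$ separately from the combined equation alone. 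Any pointwise splitting $\xi=\beta_-\psi_-+\beta_0\tpsi_0$ on $I_0$ is underdetermined, and a generic choice will \emph{not} make both $\Theta$'s vanish; consequently, the uniqueness clauses of Propositions~\ref{prop: solving psi + and psi - as functions of psi 0} and~\ref{prop:solving for theta} cannot be invoked.

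The paper resolves this by reversing the logic on the overlaps. One first \emph{defines} $\psi_*$ to equal $\eta_*$ off the overlaps, and then uses Proposition~\ref{prop_semi_infinite_extension} to \emph{construct} extensions of $\psi_-$ (to $s>A$) and $\tpsi_0$ (to $s<B$) that individually satisfy $\Theta_-=0$ and $\Theta_0^\tau=0$, subject to the half-boundary conditions $\pi_+\psi_-(A)=\pi_+\eta_*(A)$ and $\pi_-\tpsi_0(B)=\pi_-\eta_*(B)$. The role of Lemma~\ref{lem: lemma for surjectivity} is then not Sobolev control of $\xi$, but \emph{uniqueness}: on $[A,B]$ both $\eta_*$ and the sum $\beta_-\psi_-+\beta_0\tpsi_0$ solve $F(\exp_{u_\#}(\cdot))=0$ with the same $(\pi_+$ at $A$, $\pi_-$ at $B)$ boundary data, hence they agree. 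This is what forces the reconstructed deformation to equal $v$. Your invocation of Lemma~\ref{lem: lemma for surjectivity} via Remark~\ref{rem:nonlinear_metric} (which concerns dropping Assumption~\ref{assumption on setup}) misidentifies its function here.
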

\begin{proof}
    First, we understand exactly what we need to prove. Let \\$v \in G_\delta(u_-, u_0, u_+)$ and let $v = v_- \star v_0 \star v_+$ be a decomposition as in Definition~\ref{defn:open nbhd around broken u}. We need to show we can find pregluing parameters $(R_0^-,R_0^+)$ and vector fields $(\psi_\pm,\psi_0)$ such that $v$ (up to global reparametrization) equal to the deformation of the pregluing $(u_-,u_0,u_+)$ with pregluing parameters $(R_0^-,R_0^+)$ with the vector fields $(\psi_\pm,\psi_0)$ as given in the gluing construction.

    Given $v\in  G_\delta(u_-, u_0, u_+)$, with standard gluing analysis we can produce pregluing parameters $(R_0^-,R_0^+)$ such that if we let $u_\#$ denote the preglued curve, we can find a vector field $\eta_*\in W^{1,2}(u_\#^*TM)$ with suitably small norm, such that maybe after reparametrizing $v$, we have
    \[
    v(s) = \exp_{u_\#(s)}(\eta_*(s)).
    \]
    With this information, our goal is to slightly adjust the pregluing parameters and find vector fields $(\psi_\pm,\psi_0)$ so that they solve the equations $\Theta_-,\Theta_0,\Theta_+$ and live in the right functional spaces and realize $v$ as being under the image of the gluing map.

    To be precise, let $\beta_*$'s be defined with parameters $(R_-, R_0^-, R_0^+ R_+)$ as in Definition~\ref{defn: cutoff functions}. Outside the intervals 
    \begin{align}
        I_0 &:= [1+R_- - (1+\gamma)h_0^- R_-, 1+R_- + (1+\gamma)h_-R_0^-] \text{ and }\\
        I_1 &:= [3+R_- + R_0^- - (1+\gamma)h_0^+ R_0^+, 3+R_- + R_0 + (1+\gamma) h_+ R_+],
    \end{align}
     where more than one $\beta_*$ is supported, the vector field $\eta_*$ restricted to that region already satisfies Equations~\ref{eqn:theta minus}, \ref{eqn:theta 0}, and $\ref{eqn:theta plus}$. More precisely,
    \begin{align}
        \Theta_-(\eta_*) &= 0 \text{ on } (-\infty, 1 + R_- -(1+\gamma)h_0^-R_-], \\
        \Theta^\tau_0(\eta_*) &= 0 \text{ on } [1+R_- + (1+\gamma)h_-R_0^-, 3+R_- + R_0^- - (1+\gamma)h_0^+ R_0^+],\\
        \Theta^\tau_+(\eta_*) &= 0 \text{ on } [3+R_- + R_0 + (1+\gamma) h_+ R_+, \infty).
    \end{align}
    Note that we have only single inputs for the $\Theta$'s, since only one $\beta_*$ has support on each of the domains, and so only the value of one $\eta_*$ matters. Hence, we define $\psi_\pm^\tau, \psi_0^\tau$ to be equal to $\eta_*$ on the above intervals.
    To show that $v$ is obtained from the gluing construction, we need to extend and modify $\psi_-^\tau$, $\psi_0^\tau$, and $\psi_+^\tau$ on all of $\R$ such that the following properties hold:
    \begin{enumerate}
        \item We call the extended vector fields $\psi_\pm, \psi_0$; \footnote{We shall casually switch between $\psi_*^\tau$ and $\psi_*$ where convenient.}with appropriately chosen pregluing paraemters $(R_0^-,R_0^+)$ the map $v$ is obtained by perturbing the prelguing with the vector fields $(\psi_+,\psi_0,\psi_-
        )$.
        \item Equations~\ref{eqn:theta minus}, \ref{eqn:theta 0}, and \ref{eqn:theta plus} holds for $(\tpsi_+,\tpsi_0,\tpsi_-
        )$ on all of $\R$.
        \item The following sums hold:
        On 
            $\supp \beta_- \cap \supp \beta_0 $,
        \begin{equation}
            \beta_-  \tpsi_- + \beta_0  \tpsi_0  = \eta_* ;
        \end{equation}
            On 
             $\supp \beta_- \cap \supp \beta_0 $,
            \begin{equation}
             \beta_-  \tpsi_- + \beta_0  \tpsi_0  = \eta_*;
             \end{equation}
              On
              $\supp \beta_0 \cap \supp \beta_+ $,
              \begin{equation}
              \beta_0  \tpsi_0 + \beta_+  \tpsi_+  = \eta_*; 
              \end{equation}
             On  $\supp \beta_0 \cap \supp \beta_+ $,
             \begin{equation}
              \beta_0  \tpsi_0 + \beta_+  \tpsi_+ = \eta_*.
              \end{equation}
        \item The extensions have norms $\|\psi_-\|$, $\|\psi_0\|$, and $\|\psi_+\| < \epsilon$, for $\epsilon$ satisfying Propositions~\ref{prop: solving psi + and psi - as functions of psi 0} and \ref{prop:solving for theta}.
        \item The extensions lie in the appropriate spaces, $\psi_- \in \cH_-$, $\psi_0 \in \cH_0$, and $\psi_+ \in \cH_+$.
    \end{enumerate}

    Currently the triple $(\tpsi_+,\tpsi_-,\tpsi_0)$ is only defined on the complement of $I_0$ and $I_1$. We explain step by step how to modify them to satisfy each of $1-5$. After each modification, we will still denote them by $(\tpsi_+,\tpsi_-,\tpsi_0)$  to avoid introducing too many sub/superscripts.

    Let us look for the correct ways to define $\tpsi_-$ and $\tpsi_0$ on \begin{equation}
    \supp \beta_- \cap \supp \beta_0 = \left[1 + R_-  -(1+ \gamma)h_0^-R_-, 1+R_-+(1+\gamma)h_-R_0^-\right].
    \end{equation}
    The extension for $\psi_+$ is analogous. 
    Let $\pi_+$ be the projection on $T_{x_0} M$ to the subspace spanned by all eigenvectors of $\hess_f (x_0)$ that have positive eigenvalues and $\pi_-$ to the subspace of eigenvectors that have negative eigenvalues.

    We note that this makes sense because near the critical point, we have chosen our metric to be Euclidean and the Morse function quadratic, so the Hessian is defined and is non-degenerate at all points in $I_0$. So, $\pi_\pm$ make sense at each point of $I_0$.

    We apply Proposition \ref{prop_semi_infinite_extension}, we take $v_A = \pi_+ \eta_*(1+R_--(1+\gamma)h_0^-R_-)$ and $v_B=\pi_- \eta_*(1+R_-+(1+\gamma)h_-R_0^-)$ to get extensions $\tpsi_-$ and $\tpsi_0$ that satisfy $\Theta^\tau_0=0$ and $\Theta_-^\tau=0$ all the way to $s=-\infty$ and $s=+\infty$ respectively.  We note the constructed solution automatically satisfies (3) by Proposition \ref{lem: lemma for surjectivity}. (4) also follows from uniqueness. Apply this to $\tpsi_0$ and $\tpsi_+$ on $I_1$ gives us the triple $(\tpsi_+,\tpsi_-,\tpsi_0)$ that satisfies (1)-(4).

    Running the above process, we observe for each pregluing parameter $(R_0^{-'},R_0^{+'})$ near the original $(R_0^-,R_0^+)$ we have constructed vector fields $(\psi_+^{\tau'},\psi_0^{\tau'},\psi_-^{\tau'})$ that satisfy (1)-(4). For part (5), we vary the pregluing parameters $(R_0^-,R_0^+)$ (recall these are the actual independent coordinates on the base of the obstruction bundle).

    To be more precise, we need to ensure the vector fields $\psi_\pm,\psi_0$ associated to the pregluing parameters $(R_-^-,R_0^+)$ satisfying properties (1)-(4) are orthogonal to the kernel of $D_\pm, D_0$, respectively. The kernel of $D_*$ is spanned by the vector field that generates reparametrization in the $s$ direction. Let $w_*\in \ker D_*$ denote such vector field. Then, $\psi_*$ is in $\ker D_*^\perp$ if and only if
    \[
    \langle \psi_*,w_*\rangle =0.
    \]

    We next observe that when we change the pregluing parameter $R_0^-$, we are (up to small controlled errors) adding a multiple of $w_0$ to $\eta_0$. Similarly, when we are changing $R_0^+$, we are changing (up to small controlled errors) $\psi_+$ by multiples of $w_+$. Finally, we can add multiples of $w_-$ to $\eta_-$ by globally translating $v$ in the $s$ direction. After doing this carefully (see Step 3 of proof of Lemma~7.5 in \cite{Hutchings-Taubes_2009}), we can find a unique $(R_0^+, R_0^-)$ so that the resulting $\psi_\pm,\psi_0$ satisfy 1-5.
     \end{proof}

\begin{lem}\label{lem: lemma for surjectivity}
\footnote{This is analogous to Lemma~7.6 in \cite{Hutchings-Taubes_2009}}
    Recall that the Morse flow equation is given by
\begin{equation}
F = \frac{d}{ds} + X = 0.
\end{equation}
Let $u_\#$ denote the pregluing given by the pregluing parameters $(R_0^-,R_0^+)$.
Take $A = 1 + R_-  -(1+ \gamma)h_0^-R_-$ and $B =1+R_-+(1+\gamma)h_- R_0^-$.
    There exists $\epsilon_0 > 0$ such that for $\epsilon < \epsilon_0$, and 
    \begin{align}
    v_A \in \pi_+ T_{u(A)} M 
    \text{ and }
    v_B \in \pi_- T_{u(B)} M
    \end{align}
    with $|v_A|, |v_B| < \epsilon$, there exists a unique solution $\eta$ to the equation $F(\exp_{u_\#}\eta) = 0$ on $[A,B]$ satisfying the boundary conditions $\pi_+\eta(A)=v_A$ and $\pi_-\eta(B)=v_B$.

    \end{lem}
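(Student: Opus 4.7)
My plan is to exploit Assumption~\ref{assumption on setup} to collapse the problem on $[A,B]$ into an explicit linear boundary value problem, and then read off existence, uniqueness, and smallness from the exponential dichotomy of the Hessian. The key observation is that the entire interval $[A,B]$ sits inside the Morse chart $U_-$ around $x_0$, so $\exp_{u_\#(s)}\eta(s)=u_\#(s)+\eta(s)$ and $\nabla f(p)=Hp$ with $H=\hess_{x_0}f$ a constant diagonal matrix. Because $u_-$ and $\tu_0$ are each gradient flowlines, one computes $u_\#'+Hu_\#=\beta_-'u_-+\beta_0'\tu_0=:-E(s)$, which depends only on the cutoff derivatives and carries no nonlinear term in $\eta$. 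The equation $F(\exp_{u_\#}\eta)=0$ therefore reduces on $[A,B]$ to the purely linear inhomogeneous ODE
\begin{equation}
\eta'+H\eta=-E(s).
\end{equation}

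Next I would decouple this via the spectral splitting $\eta=\pi_+\eta+\pi_-\eta$. Since $H$ commutes with $\pi_\pm$, the system breaks into two independent first-order systems $\eta_\pm'+H_\pm\eta_\pm=-E_\pm$, where $H_+:=H|_{\pi_+TM}$ is positive-definite and $H_-:=H|_{\pi_-TM}$ is negative-definite. Duhamel's formula produces the unique solutions that match the prescribed one-sided data:
\begin{align}
\eta_+(s)&=e^{-H_+(s-A)}v_A-\int_A^s e^{-H_+(s-t)}E_+(t)\,dt,\\
\eta_-(s)&=e^{-H_-(s-B)}v_B-\int_B^s e^{-H_-(s-t)}E_-(t)\,dt.
\end{align}
Both homogeneous factors have nonpositive exponent on $[A,B]$ ($e^{-H_+(s-A)}$ decays forward from $A$, $e^{-H_-(s-B)}$ decays backward from $B$), and the Duhamel integrals are bounded by $\|E_\pm\|_\infty$ divided by the smallest absolute eigenvalue of $H$. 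This yields $|\eta(s)|\lesssim|v_A|+|v_B|+\|E\|_\infty$, with implied constant depending only on the fixed eigenvalues of $H$ and \emph{not} on the (large) pregluing parameters.

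To close the argument I would use the exponential decay of $u_-$ and $\tu_0$ on $\supp\beta_\pm'$ (Proposition~\ref{Prop:decay_of_kernel}) to bound $\|E\|_\infty$, and combine this with the smallness hypothesis $|v_A|,|v_B|<\epsilon_0$ to ensure $u_\#+\eta$ remains inside $U_-$. This is what legitimizes the identification $\exp=\mathrm{id}+(\cdot)$ throughout $[A,B]$ and hence the reduction to the linear equation; uniqueness is then immediate from linearity.

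There is no genuine obstacle under Assumption~\ref{assumption on setup}: the lemma is essentially a constant-coefficient exponential-dichotomy statement for a split two-point boundary value problem, and the only care needed is in tracking that the implied constants depend only on $H$ rather than on $R_-,R_0^-$. If one were to drop Assumption~\ref{assumption on setup} (cf.\ Remark~\ref{rem:nonlinear_metric}), the quadratic remainder $Q(\eta)$ would reappear as a nonlinear source term and one would upgrade the above linear solver to a contraction mapping; that is the point at which the problem would become genuinely harder.
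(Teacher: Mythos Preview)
Your proposal is correct and follows essentially the same route as the paper: both reduce the problem to a constant-coefficient linear ODE on $[A,B]$ via Assumption~\ref{assumption on setup} and then solve the split two-point boundary value problem using the spectral decomposition of $H=\hess_{x_0}f$. The paper packages this as showing a map $\mathcal{F}:\eta\mapsto(\pi_+\eta(A),\pi_-\eta(B),F(\exp_{u_\#}\eta))$ is a linear isomorphism via eigenvector (``Fourier'') expansion, whereas you write down the Duhamel solution directly; these are the same computation, and your observation that the implied constants depend only on the eigenvalues of $H$ matches the paper's remark that the inverse is bounded independently of $R_0^\pm$.
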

    \begin{proof}[Proof of Lemma:]
    Denote by $W^{2,2}[A,B]$ the Sobolev completion of $u_\#^*TM$ restricted to the domain $s\in [A,B]$.
    Define the map 
        \begin{align}
            \mathcal{F}: W^{2,2} [A, B] & \to \pi_+ T_{u_\#(A)} M \times \pi_-T_{u_\#(B)} M  \times W^{1,2} [A,B]\\
            \eta & \mapsto (\pi_+\eta(A), \pi_- \eta(B), F(exp_{u_\#}(\eta)).
        \end{align}
        We show that $\mathcal{F}$ is an isomorphism when restricted to a sufficiently small ball of its domain.

        We note that the operator $F$ on $W^{2,2}[A,B]$ is a linear operator $F=\frac{d}{ds} +A$ where $A$ is the Hessian of $f$ at the critical point. This means we can solve this problem using Fourier series expansions. 
        
        Let us show $\mathcal{F}$ is injective. Consider $\eta \in W^{1,2}[A,B]$ $F(\eta)=0$. Then, we can write $\eta = \sum_{i=1}^n a_i e^{-\lambda_i s} v_i$, where $v_i$ are eigenvectors of $A$ with eigenvalues $\lambda_i$. The constants $a_i$ are all equal to $0$ because $\pi_+\eta (A) =\pi_-\eta(B) =0$.

        To show surjectivity of $\mathcal{F}$, suppose $\xi \in W^{1,2} [A,B]$, then we can write $\xi = \sum_{i=1}^n c_i(s) v_i$. If we set $\eta =\sum a_i(s) v_i$, we can solve for $a_i$ satisfying the ODE
        \[
        a_i'(s) + \lambda_i a_i =c_i(s).
        \]
        This ensures the condition $F(\eta)=\xi$. The condition $(\pi_+\eta(A),\pi_-\eta(B))=(v_A,v_B)$ is ensured by adding a multiple of $\sum_{i=1}^n a_i e^{-\lambda_i s} v_i$. Doing this carefully also shows that the norm of the inverse of $F$ is bounded above by a constant independent of the pregluing parameters $R_0^\pm$.
    \end{proof}
    
    Using similar ideas as above, we prove the following proposition.
    \begin{prop}\label{prop_semi_infinite_extension}\footnote{This proposition is analogous to Lemma~7.7 in \cite{Hutchings-Taubes_2009}}
    Take $A = 1 + R_-  -(1+ \gamma)h_0^-R_-$ and $B =1+R_-+(1+\gamma)h_- R_0^-$. Given  
    \begin{align}
    v_A \in \pi_+ T_{u(A)} M 
    \text{ and }
    v_B \in \pi_- T_{u(B)} M
    \end{align}
    with $|v_A|, |v_B| < \epsilon$, there exists unique $\tpsi_0 \in W^{2,2}(u_0^{\tau *}TM)$ restricted to $s<B$ and $\psi_-\in W^{2,2}(u_-^{\tau *}TM)$ restricted to $s>A$, both with norm less than $C\epsilon$ so that
    \[
    \tpsi_-(A) = v_A, \quad \tpsi_0(B) = v_B.
    \]
    For $s<B$ we have
    \[
    \Theta^\tau_0(\tpsi_-,\tpsi_0)=0
    \]
    and for $s>A$ we have
    \[
    \Theta^\tau_-(\psi_-,\tpsi_0)=0
    \]
    \end{prop}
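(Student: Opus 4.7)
The plan is to solve the two semi-infinite boundary value problems by eigendecomposition of $\hess_{x_0} f$, using the same Fourier-series technique as in Lemma~\ref{lem: lemma for surjectivity} but with the finite interval $[A,B]$ replaced by half-lines, and the boundary condition at the far end replaced by the requirement of $W^{2,2}$-decay at infinity. The key simplification is again Assumption~\ref{assumption on setup}: for $s\in [A,\infty)$ the flowline $u_-$ lies in the Morse chart $U_-$ (possibly after treating a compact initial segment by the finite-interval argument of Lemma~\ref{lem: lemma for surjectivity}), and similarly $u_0$ lies in $U_-$ for $s\in (-\infty,B]$. On these regions the exponential map is vector addition, $f$ is quadratic, the nonlinearities $Q_-,Q_0$ vanish identically, and $D_-,D_0$ reduce to the constant-coefficient operator $\frac{d}{ds}+\hess_{x_0}f$.

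Working in the linear regime, decompose $\psi_-(s)=\sum_i a_i(s)v_i$ in the orthonormal eigenbasis $\{v_i\}$ of $\hess_{x_0}f$ with eigenvalues $\lambda_i$. On $[A,\infty)$ each scalar component satisfies $a_i'+\lambda_i a_i=c_i(s)$ with source $c_i$ compactly supported inside $\supp\beta_0'$. For $\lambda_i>0$ (the $\pi_+$-directions) the homogeneous solution $e^{-\lambda_i s}$ decays at $+\infty$ and the solution has one free parameter at $s=A$; for $\lambda_i<0$ (the $\pi_-$-directions) the homogeneous solution blows up, so the decay requirement picks out a unique bounded particular solution. Prescribing the boundary datum $v_A\in \pi_+ T_{u(A)}M$ therefore determines $\psi_-$ uniquely for any fixed $\tpsi_0$, with the pointwise estimate $|\psi_-(s)|\lesssim |v_A|e^{-\lambda_0^+(s-A)}+\|c\|_{\supp\beta_0'}$ and corresponding $W^{2,2}$-norm bound $\|\psi_-\|\lesssim |v_A|+\|\tpsi_0\|_{\supp\beta_0'}$, uniform in the pregluing parameters. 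An identical analysis on $(-\infty,B]$ with decay at $-\infty$ produces $\tpsi_0$ from $\psi_-$ and the boundary datum $v_B\in \pi_- T_{u(B)}M$.

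To couple these, package the two resolutions into an iteration map on a small ball of $W^{2,2}(u_-^{\tau*}TM)|_{[A,\infty)}\times W^{2,2}(u_0^{\tau*}TM)|_{(-\infty,B]}$. The cutoffs $\beta_-'$ and $\beta_0'$ are both supported deep inside $[A,B]$, at exponential distance $\sim R_-$ from the boundary points $A$ and $B$, so the portions of $\psi_-$ and $\tpsi_0$ that feed back into each other's equations are exponentially small in $R_-$. This produces a contraction with constant $O(e^{-cR_-})$ for some $c>0$, exactly as in Propositions~\ref{prop: solving psi + and psi - as functions of psi 0} and \ref{prop:solving for theta}, whose unique fixed point gives the desired pair $(\psi_-,\tpsi_0)$ with $\|\psi_-\|+\|\tpsi_0\|\leq C\epsilon$ and with each component satisfying its prescribed equation.

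The main obstacle is establishing uniformity of the constant $C$ and of the contraction constant as the pregluing parameters $(R_-,R_0^-)$ vary. Concretely, one must verify that the right inverse of each scalar ODE $a_i'+\lambda_i a_i=c_i$ on a half-line with the prescribed decay/boundary condition has operator norm independent of the position of $A$ and $B$, and that the $\beta_-'$- and $\beta_0'$-localized coupling terms contribute operator norms that vanish as $R_-\to\infty$. Both facts follow from the same eigenvalue-driven exponential bookkeeping carried out in Sections~\ref{sec: C0 closeness of linearized section} and \ref{sec: C1 estimates}, so modulo this estimate chasing the argument is a direct extension of the proof of Lemma~\ref{lem: lemma for surjectivity} to semi-infinite intervals.
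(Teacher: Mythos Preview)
Your approach is correct and shares the paper's core idea: exploit Assumption~\ref{assumption on setup} so that on the semi-infinite pieces the equations reduce to constant-coefficient ODEs that can be solved component-by-component in the eigenbasis of $\hess_{x_0}f$, with decay at infinity replacing the second boundary condition. The paper's sketch packages this differently: it defines a single map
\[
\mathcal{F}(\psi_-,\tpsi_0)=(\pi_+\psi_-(A),\,\pi_-\tpsi_0(B),\,\Theta_-^\tau,\,\Theta_0^\tau)
\]
on the product $W^{2,2}(u_-|_{[A,\infty)}^*TM)\times W^{2,2}(u_0^\tau|_{(-\infty,B]}^*TM)$ and asserts it is an isomorphism by the same Fourier-series reasoning as in Lemma~\ref{lem: lemma for surjectivity}, treating the coupled system all at once. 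You instead solve each half-line problem for fixed input from the other and then close the loop by a contraction with constant $O(e^{-cR_-})$, in the spirit of Propositions~\ref{prop: solving psi + and psi - as functions of psi 0} and \ref{prop:solving for theta}. Both routes are valid; yours is more explicit about how the $\beta_-'$/$\beta_0'$ coupling is handled and why it is small, while the paper's single-map formulation is more economical but leaves the coupling implicit in the claim that $\mathcal{F}$ is an isomorphism.
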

    \begin{proof}[Sketch of proof]
    The idea of the proof is to define 
    \begin{align}
    \mathcal{F}:&W^{2,2}(u_-|_{[A,\infty)}^*TM)\times W^{2,2}(u_0^\tau|_{(-\infty,B]}^*TM)\\ 
    &\rightarrow \pi_+ T_{u_\#(A)} M \times \pi_-T_{u_\#(B)} M\\
    &\quad\quad\quad\times  W^{1,2}(u_-|_{[A,\infty)}^*TM)\times W^{1,2}(u_0^\tau|_{(-\infty,B]}^*TM) 
    \end{align}
    by
    \[\mathcal{F}(\psi_-,\tpsi_0) = (\pi_+\psi_-(A),\pi_-\tpsi_0(B), \Theta_-, \Theta_0^\tau)\] 
    and show this $\mathcal{F}$ is an isomorphism by a Fourier series argument as in the proof of Lemma~\ref{lem: lemma for surjectivity}.
    \end{proof}


\section{Obstruction Bundle Gluing with perturbation}

In this section, we use the same techniques as before to examine the case of Morse but not Smale gradient vector fields, and what can happen to broken flowlines after perturbing the metric in a 1-parameter family. In particular, we examine (under certain assumptions) the glue-ability of $2$-component flowlines over a $1$-parameter family of metrics. We refer to this gluing informally as {\bf ``$t$-gluing"}. Here, $t$ refers to the perturbation. Our main purpose is to give an expository account of how the technology can be implemented, rather than repeating detailed proofs that are all of the same flavour as those we previously worked out. Hence, we will state the setup and the relevant theorems precisely, but will not go into the proofs in detail.

We restrict ourselves to particular one-parameter perturbations $\{g_t\}_{t \in (0,\epsilon)}$ of the metric $g$ that are
 defined as follows. We borrow this construction from \cite[Theorem 2.2.5 (Smale Theorem)]{Audin-Damian_2014}. Assume, for simplicity, that on the entire manifold $M$ there is only one (unparametrized) flowline $u_0$ with a non-trivial cokernel for the pair $(f,g)$. We assume the cokernel is 1-dimensional. The more general case would be considering bifurcations of broken flowlines with multiple non-transverse components.

Let $y = u_0(0) \in M$. Recall from Equation~\ref{eqn: identification of coker with perp}, we can identify 
\begin{align}\label{eqn: identification of coker with perp t-gluing}
    \coker D_{u_0} \cong \R \langle v \rangle,
\end{align}
for some $0 \neq v \in (T_y W^u + T_y W^s)^\perp$. We let $\sigma$ denote the element in the cokernel that corresponds to $v\in T_{u(0)}M$.

Perturb $g$ to $g_t$ in a neighbourhood of $u_0(0)$ away from all the critical points and index $1$ flowlines. We choose the perturbation so that
\begin{equation} \label{eqn_perturbation_of_f}
\nabla_{g_t} f(u_0(s)) = \nabla_{g} f + tV +O(t^2)
\end{equation}
such that $\int_{u_0(s)}\langle \sigma, V\rangle ds >0$.

Then, it can be checked that for all $t \in (0, \epsilon)$ the pairs $(f, g_t)$ are Morse-Smale (in particular, the flowline $u_0$ disappears for $t\neq 0$). Let $\ms(x_{-1}, x_1; g_t)$ denote the set of flowlines for metric the $g_t$, namely, $u: \R \to M$ satisfying
\begin{align}
    \frac{d u}{ds} = -\nabla_{g_t} f \circ u
\end{align}
with $u(\pm \infty) = x_{\pm 1}$.
Figure~\ref{fig:t-gluing} shows the kind of bifurcation for gradient flowlines that can happen for $t\neq 0$. It is precisely this kind of phenomenon that we wish to describe using obstruction bundle gluing techniques.
\begin{figure}[h]
    \centering
    \includegraphics[width=\linewidth]{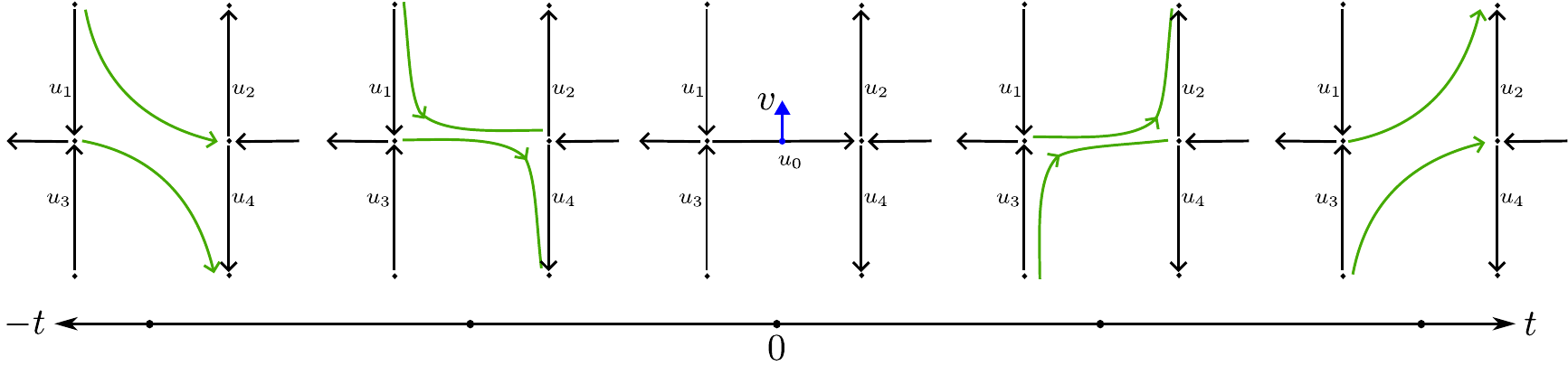}
    \caption{Different pairs of flowlines are gluable for $t>0$ ($(u_0, u_2)$ and $(u_3, u_0)$) and for $t<0$ ($(u_1, u_0)$ and $(u_0, u_4)$) }
    \label{fig:t-gluing}
\end{figure}
We again work the Assumptions~\ref{assumption on setup} on the form of the metric $g_t$ near the critical points to simplify our analysis.
\begin{thm}\label{thm: t-gluing}
    For $(f,g)$ a pair of a Morse function and a metric satisfying Assumptions~\ref{assumption on setup}, consider the perturbation $(f, g_t)$ given as above. For $x_{-1}, x_0, x_1 \in \crit (f)$ with
    \begin{align}
        \ind(x_{-1}) = k + 1, \ind(x_0) = \ind(x_1) = k,
    \end{align}
    let
    \begin{align}
        (u_-,u_0) \in \ms(x_{-1}, x_0) \times \ms(x_0, x_1).
    \end{align}
    
    Let $\lambda_0^+$ be the smallest positive eigenvalue of $\hess_{x_0} f$ and $\lambda_0^-$ the largest (least negative) negative eigenvalue of $\hess_{x_0} f$.
 Denote the cokernel element corresponding to $v$ under the identification \ref{eqn: identification of coker with perp} by $\sigma_0$. 
Assume there exists a nonzero $b_- \in T_{x_0} M$ and a constant $c > 0$ such that
\begin{align}
 \sigma_0 = 
         e^{\lambda_0^+ s}b_-  + \sum_{v_+} e^{\lambda_+ s} v_+ \text{ for } s< -1
\end{align}
Here, $v_+$ are eigenvectors of the Hessian with eigenvalue $\lambda_+$. By assumption we have $|\lambda_+|>|\lambda_0^+|$ for every $\lambda_+$ that appears in the sum.
Similarly,  near the critical point, the gradient flowline $u_-$ can be written as 

\begin{align}
 u_-& =  e^{-\lambda_0^+ s}a_-  + \sum_{v_-} e^{-\lambda_+ s} v_- & s> 1
     \end{align}
for a vector $a_- \in T_{x_0}M$ which is an eigenvector of the Hessian. Assume that 
\begin{align}
    \langle a_-, b_- \rangle \neq 0, \quad 
\end{align}
Then, if 
\begin{equation}
    \langle a_-, b_- \rangle > 0 \quad (\text{resp., } \langle a_-, b_- \rangle < 0),
\end{equation}
    there exists a unique one-parametric family 
    \begin{align}
        u_t \in \ms(x_{-1}, x_1; g_t) \text{ for } t > 0 \quad (\text{resp., } t < 0).
    \end{align}
    that degenerates into the broken gradient flowline $(u_-,u_0)$ at $t=0$.
    Conversely if $\langle a_-, b_- \rangle > 0$ (resp., $ \langle a_-, b_- \rangle < 0$), no 1-parameter family degenerates to $(u_-,u_0)$ from $t<0$ (resp. $t<0$).

    An analogous statement holds for $(u_+, u_0) \in \ms(x_{-1}, x_0) \times \ms(x_0, x_1) $ with
    \begin{align}
        \ind(x_{-1}) = \ind(x_0) = k + 1,  \ind(x_1) = k.
    \end{align}
    
\end{thm}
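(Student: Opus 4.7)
The plan is to adapt the obstruction bundle gluing machinery from Section~\ref{sec: obg 0-gluing} with two modifications: the broken flowline has only two components (so a single gluing parameter $R$ suffices), and the perturbation parameter $t$ enters the gradient flow equation as an extra source term.  Concretely, I would write $\nabla_{g_t}f = \nabla_g f + tV + O(t^2)$ with $V$ supported in a small neighbourhood of $u_0(0)$ disjoint from the critical points and from any other non-transverse flowline, and build the pregluing $u_\#(R)$ by interpolating between $u_-$ and an $R$-shift of $u_0$ using cutoffs $\beta_-,\beta_0$ of the type in Definition~\ref{defn: cutoff functions}, with the auxiliary parameter $R_-$ tied to $R$ by $R_-=R/A$ for a large integer $A$.

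Substituting a deformation $\exp_{u_\#}(\beta_-\psi_-+\beta_0\psi_0^\tau)$ into $F^t(u)=\dot u+\nabla_{g_t}f\circ u=0$ and expanding as in Section~\ref{sec: equation for deformation to be a flowline} yields two coupled equations $\Theta_-^t(\psi_-,\psi_0)=0$ and $\Theta_0^t(\psi_-,\psi_0)=0$ that differ from those of the $0$-gluing case only by extra terms of the form $tV(u_\#)(1+O(\psi_*))+O(t^2)$.  Because $u_-$ is transversely cut out and $\coker D_{u_0}$ is one-dimensional, the contraction mapping arguments of Propositions~\ref{prop: solving psi + and psi - as functions of psi 0} and \ref{prop:solving for theta} go through verbatim after absorbing a mild dependence on $t$: for $(R,t)\in[r,\infty)\times(-\epsilon,\epsilon)$ with $r$ large and $\epsilon$ small, I obtain unique $\psi_-(R,t,\psi_0)$ and then $\psi_0(R,t)$, depending smoothly on $(R,t)$, with $\|\partial_t\psi_*\|\lesssim\|V\|_{C^0}$ (since differentiating the source yields $V+O(t,\psi_*)$).

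Taking $\mathcal{O}\to[r,\infty)\times(-\epsilon,\epsilon)$ to be the trivial bundle with fibre $\mathrm{Hom}(\coker D_{u_0},\mathbb{R})$, the obstruction section is the $L^2$ pairing of $\sigma_0$ with the analogue of $F_0$ in Equation~\ref{eqn:psi 0 cokernel part untranslated}, and its linearization keeps the two largest contributions:
\begin{align}
\mathfrak{s}_0(R,t)(\sigma_0)=-\langle b_-,a_-\rangle\,e^{-\lambda_0^+(R_-+R)}+c\,t,
\end{align}
where $c=\int_\mathbb{R}\langle\sigma_0,V\circ u_0\rangle\,ds>0$ by the normalization in \eqref{eqn_perturbation_of_f}.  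Hence the zero set of $\mathfrak{s}_0$ is the graph $t=c^{-1}\langle b_-,a_-\rangle\,e^{-\lambda_0^+(R_-+R)}$, which lies in $\{t>0\}$ iff $\langle a_-,b_-\rangle>0$ and in $\{t<0\}$ iff $\langle a_-,b_-\rangle<0$; in either case it is a transversely cut-out smooth $1$-manifold, parametrized by $R$ or (via the induced diffeomorphism) by $t\neq 0$.

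The main technical obstacle is to show that $\mathfrak{s}$ is ``$C^1$-close'' to $\mathfrak{s}_0$ in both directional derivatives $\partial_R$ and $\partial_t$, in the sense of Step~2 of the proof of Theorem~\ref{thm: gluing}.  The $\partial_R$ estimates parallel Sections~\ref{sec: C0 closeness of linearized section} and \ref{sec: C1 estimates} almost verbatim: bootstrapping via the autonomous decay estimates (Propositions~\ref{prop:autonomous} and \ref{prop:decay_R_0}) shows that the portion of $\psi_-,\psi_0$ (and of their $R$-derivatives) contributing to $\mathfrak{s}-\mathfrak{s}_0$ is exponentially smaller than the leading pregluing error.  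The new piece is the $\partial_t$ estimate: each nonlinear or implicit contribution to $\mathfrak{s}-\mathfrak{s}_0$ --- the terms $Q_0(\psi_0)$, $\beta_-'\psi_-$, and the $O(t^2)$ correction from $\nabla_{g_t}f$ --- has $\partial_t$ either quadratically small in $\psi_*$ or at least linear in $t$, hence $o(c)$ uniformly on the base for $r$ large and $\epsilon$ small.  Once ``$C^1$-closeness'' is established, the sign-and-transversality argument of Step~4 of the proof of Theorem~\ref{thm: gluing} directly transfers the zero count from $\mathfrak{s}_0$ to $\mathfrak{s}$, and the two-component analogues of Lemmas~\ref{lem:injectivity of gluing isomorphism} and \ref{lem:surjectivity of the gluing isomorphism} (with one pregluing coordinate replaced by $t$) show that $\mathfrak{s}^{-1}(0)$ parametrizes exactly the $1$-parameter family $u_t$ that degenerates to $(u_-,u_0)$ as $t\to 0$.
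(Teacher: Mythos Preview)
Your outline matches the paper's approach essentially step for step: two-component pregluing with $R_- = R_0/A$, the same splitting into $\Theta_-$ and $\Theta_0$, contraction-mapping solutions for $\psi_-$ and $\psi_0$, and the linearized section $\os_{00}(R_0,t) = -\langle b_-,a_-\rangle e^{-\lambda_0^+(R_-+R_0)} + t\langle V,\sigma_0\rangle$.

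One technical point you gloss over deserves a flag. You write that the $\partial_R$ estimates ``parallel Sections~\ref{sec: C0 closeness of linearized section} and \ref{sec: C1 estimates} almost verbatim,'' but the paper singles out a new difficulty here: the error terms $L$ and $Q$ in $\os - \os_{00}$ now contain $t$-dependent pieces whose $R_0$-derivatives are of order $t\,e^{-\eta R_0}$ for some $\eta > 0$ possibly \emph{smaller} than $\lambda_0^+(1+1/A)$, so one cannot directly conclude $|\partial_{R_0}(\os-\os_{00})| \ll |\partial_{R_0}\os_{00}|$ uniformly over the whole base. The paper's remedy is a localization argument: it suffices to control the $R_0$-derivative only at the zeroes of $\os$, and any such zero must occur where $e^{-\lambda_0^+(1+1/A)R_0} \approx t$, which makes $t\,e^{-\eta R_0} \approx e^{-\lambda_0^+(1+1/A)R_0}\,e^{-\eta R_0}$ genuinely small against the leading derivative. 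This is not a different strategy from yours, but it is the one place where ``almost verbatim'' would not survive a careful write-up.
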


As in the $0$-gluing case, we first discuss an Example that we recommend the reader keep in mind throughout the proof.
\begin{ex}\label{ex: t-gluing on torus}
\begin{figure}[h]
    \centering
    \includegraphics[scale = 0.5]{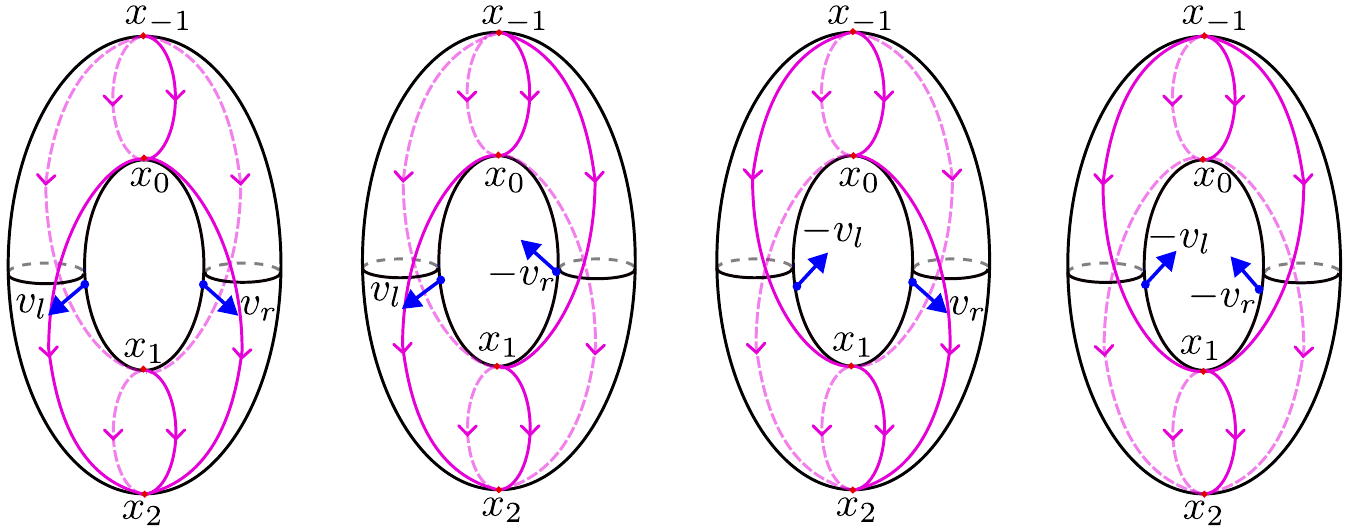}
    \caption{Example of $t$-gluing on the torus. Different choices of the vector as the generator of the cokernel give a different combination of glueable flowlines. Let $V^r$ and $V^l$ denote the first-order perturbation of the gradient flow equations \ref{eqn_perturbation_of_f} over $u_0^l$ and $u_0^r$, respectively. The bifurcations above are all for $t<0$.
    The choices of perturbations from left to right are given by (1)
    $\langle V^l,\sigma_0^l\rangle>0, \langle V^r,\sigma_0^r\rangle>0$; (2) $\langle V^l,\sigma_0^l\rangle>0, \langle V^r,\sigma_0^r\rangle<0$; (3) $\langle V^l,\sigma_0^l\rangle<0, \langle V^r,\sigma_0^r\rangle>0$; (4)$l\langle V^l,\sigma_0^l\rangle<0, \langle V^r,\sigma_0^r\rangle<0$.}
    \label{fig: different t-gluings torus example}
\end{figure}
   Consider the upright torus with Morse function given by the height function. Just as in Example~\ref{ex: 0-gluing on upright torus}, the flowlines $u_0^l$ and $u_0^r$ have $1$-dimensional cokernels. Denote the vector $v^l : = (1,0,0) \in T_{p^l}T^2$ and $v^r : = (1,0,0) \in T_{p^r}T^2$. We take the cokernels $\sigma_0^l$ and $\sigma_0^r$ of $u_0^l$ and $u_0^r$ respectively to be given by the vectors $v_l$ and $v_r$ respectively.  We can perturb the metric over $u_0^l$ and $u_0^r$ independently, and different choices give different gluable pairs as illustrated in Figure~\ref{fig: different t-gluings torus example}.

   With a fixed choice of perturbation of the metric around $u_0^l$ and $u_0^r$, we can define the Morse complex even without the Smale condition. The generators of the complexes remain critical points, graded by their Morse indices. The differential now counts broken flowlines of total index $1$ (there can be an index $0$ flowline as a component of the broken flowline) that is ``$t$-gluable" with the choices we have made. Theorem~\ref{thm: t-gluing} implies that the complex is the same as the Morse complex for a choice of Morse-Smale pair $(f,g)$. Hence, this definition recovers the usual Morse complex.
\end{ex}

\begin{figure}[h]
    \centering
    \includegraphics[scale=0.5]{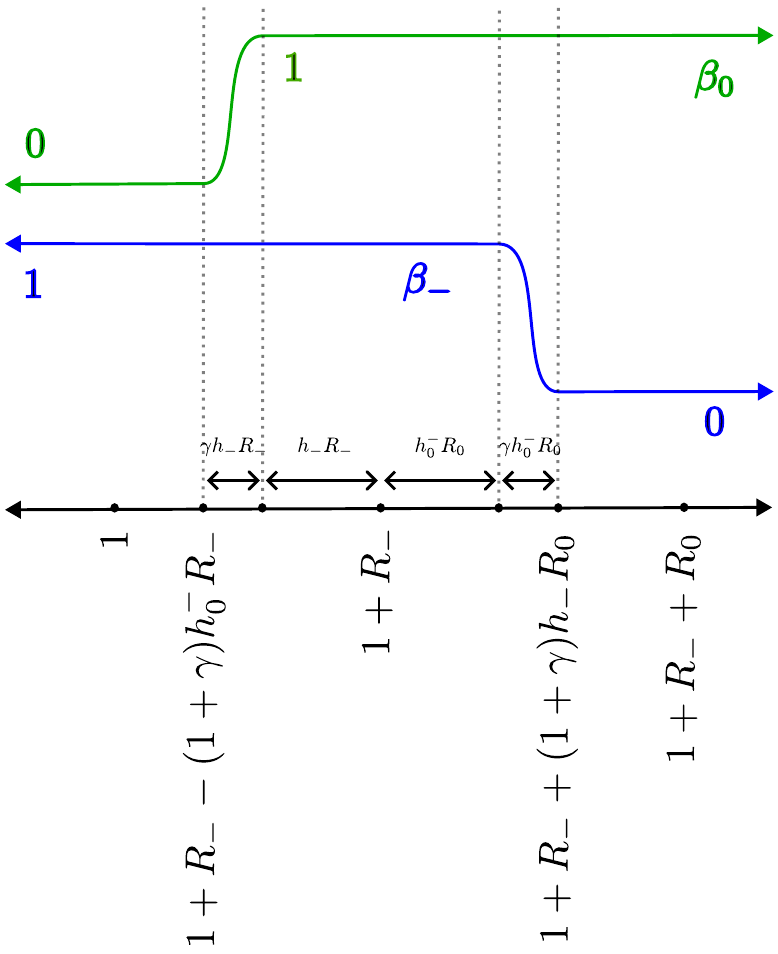}
    \caption{Cutoff functions for $t$-gluing}
    \label{fig:cutoff functions for t-gluing}
\end{figure}
We start by defining the pregluing, refer Figure~\ref{fig:cutoff functions for t-gluing}.
Choose gluing parameters $R_-$ and $R_0 > 0$. For $\beta: \R \to [0,1]$ as in Definition~\ref{defn: cutoff functions}, $0<h<1$ and $0\ll \gamma \ll 1$, define two cutoff functions
\begin{align}
    \beta_-(s) & = \beta\left(\frac{-s + (1+R_- + h_0(1+\gamma)R_0)}{\gamma h_0 R_0 }\right),\\
    \beta_0(s) & = \beta\left( \frac{s - (1+R_- - h_-(1+ \gamma)R_-)}{\gamma h_- R_-}\right).
\end{align}
Similar to the previous section, using the fact that the metric is the constant metric near the critical points, define the pregluing $u_\#: \R \to M$ by
\begin{align}
    u_\#(s) = \beta_-(s) u_-(s) + \beta_0(s) u_0^{R_- + R_0}.
\end{align}
To deform the pregluing, consider the pullback bundles
\begin{align}
    u_-^*(TM) &\text{ on }  (-\infty, 1+R_-],\\
    (u^{R_-+R_0}_0)^*(TM) &\text{ on } =[1+R_-, \infty).
\end{align}
Pick sections $\psi_-$ and $\tpsi_0$ of $ u_-^*(TM)$ and $(u^{R_-+R_0}_0)^*(TM)$, respectively, and deform $u_\#$ to get $u(\psi_-, \tpsi_0)$ given by
\begin{align}\label{eqn: t-gluing pregluing}
    s \mapsto &\exp_{u_\#(s)}(\beta_-\psi_- + \beta_0 \tpsi_0)(s)\\
    & = \beta_-(s)\exp_{u_-(s)}\psi_-(s) + \beta_0(s)\exp_{\left(u_0^{R_- + R_0}(s)\right)}\tpsi_0(s).
\end{align}
Up to this point, the pregluing and the deformation are exactly like in the $3$-component $0$-gluing case. The main change here is that the operator is now different. The base space for the gradient flow operator is now
\begin{align}
    \mathcal{B} \times (-\rho, \rho) := \mathcal{P}^{1,2}_{x_{-1}, x_1} TM \times (-\rho, \rho)
\end{align}
and the operator is given by
\begin{align}\label{eqn: operator for t-gluing}
    F(u,t) = \dot{u} + \nabla_{g_t} f \circ u =  \dot{u} + \nabla_g f \circ u + tV\circ u +O(t^2)
\end{align}

The deformed pregluing $u(\psi_-, \tpsi_0)$ is a flowline for $\nabla_{g_t} f$ if and only if it is a zero of $F$ given in Equation~\ref{eqn: operator for t-gluing}. One can expand the equation $F(u(\psi_-, \tpsi_0), t) = 0$ just as in Section~\ref{sec: equation for deformation to be a flowline} to get the following lemma.
\begin{lem}\label{lem: splitting the 2level t gluing eqn to thetas}
    There exist functionals $\Theta^\tau_-$ and $\Theta^\tau_0$ given by 
    \begin{align}
        \Theta^\tau_- (\psi_-, \tpsi_0) &= D_-\psi_- + \beta'_0 \psi_0 + \beta'_0 \tu_0 + Q_-(\psi_-),\label{defn: theta minus t-gluing}\\
        \Theta^\tau_0 (\psi_-, \tpsi_0,t) &= D^\tau_0 \tpsi_0 + \beta'_-\psi_- +\label{defn: theta 0 t-gluing} \\
        & \quad \quad + \beta'_-u_- + t V\circ \tu_0 +  Q_0(t,\psi_0^\tau)
    \end{align}
    where $D_*$ are the respective linearized operators (of the unperturbed operator $\frac{\partial}{\partial s} + \nabla f = 0$) and $Q_*$ are ``quadratic" (or higher order) functions of its input variables. Note for $Q_0$, the terms involving $t$ are supported only in the region where we perturbed the metric.
    Then we have $u(\psi_-, \tpsi_0)$ is a gradient flowline of $\nabla_t f$, that is,
    \begin{equation}\label{eqn: gradient flow equation for family of metrics gt}
        F(u(\psi_-, \tpsi_0), t) =0
    \end{equation}
    if and only if
    \begin{align}
        \beta_-\Theta^\tau_-(\psi_-, \tpsi_0) + \beta_0\Theta^\tau_0(\psi_-, \tpsi_0,t) = 0.
    \end{align}
    The superscripts $\tau$ always denote an appropriate translation as earlier.
\end{lem}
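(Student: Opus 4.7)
The strategy is to mimic the derivation in Section~\ref{sec: equation for deformation to be a flowline}, but now with two components instead of three and with the additional perturbation term $tV+O(t^2)$ contributing to $F$. First I would expand $F(u(\psi_-, \tpsi_0),t)$ by differentiating the right-hand side of \ref{eqn: t-gluing pregluing} in $s$: using the product rule with the cutoffs $\beta_-,\beta_0$, the $s$-derivative of $\exp_{u_\#}(\beta_-\psi_-+\beta_0\tpsi_0)$ produces terms of two types, namely $\beta_* (u'_* + \psi_*')$ for $*\in\{-,0\}$ and ``error'' terms $\beta_*'(u_* + \psi_*)$ coming from differentiating the cutoffs. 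Simultaneously, I would Taylor expand
\begin{align}
\nabla_{g_t} f \circ u(\psi_-,\tpsi_0) = \nabla_g f \circ u(\psi_-,\tpsi_0) + tV\circ u(\psi_-,\tpsi_0) + O(t^2)
\end{align}
around $u_\#$, producing linearized terms $\nabla_{\beta_-\psi_-+\beta_0\tpsi_0}(\nabla_g f)(u_\#)$ plus a quadratic remainder, together with the perturbation contribution $tV\circ u_\#$ and its correction in $\psi_*$.

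Next, I would use the fact that $u_-$ and $u_0$ are unperturbed gradient flowlines, i.e.\ $u'_* + \nabla_g f(u_*)=0$, to cancel the leading $\beta_*(u'_* + \nabla_g f(u_*))$ terms. The remaining pieces reorganize as $\beta_-$ times an expression on the domain of $u_-$, and $\beta_0$ times an expression on the (translated) domain of $u_0$, using the standard identities
\begin{align}
\beta_-' = \beta_0\beta_-',\qquad \beta_0' = \beta_-\beta_0'.
\end{align}
Inside the $\beta_-$ bracket I collect $D_-\psi_- + \beta_0'(\tu_0 + \tpsi_0)$ plus a quadratic $Q_-(\psi_-)$; inside the $\beta_0$ bracket I collect $D_0^\tau\tpsi_0 + \beta_-'(u_- + \psi_-)$ plus the new perturbation term $tV\circ \tu_0$, with all remaining nonlinear and $O(t^2)$ contributions absorbed into $Q_0(t,\psi_0^\tau)$. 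This reproduces exactly \ref{defn: theta minus t-gluing}--\ref{defn: theta 0 t-gluing}.

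The ``if and only if'' part then follows because outside $\supp\beta_-\cap\supp\beta_0$ only one of $\beta_-,\beta_0$ is nonzero and equals $1$, so the decomposition $\beta_-\Theta_-^\tau + \beta_0\Theta_0^\tau = 0$ is equivalent to $F(u(\psi_-,\tpsi_0),t)=0$ pointwise. The main bookkeeping obstacle is keeping track of the perturbation term: $tV\circ u(\psi_-,\tpsi_0)$ must be split into its ``leading'' part $tV\circ \tu_0$ (which lives in $\Theta_0^\tau$ since the perturbation of the metric is supported away from critical points and near the image of $u_0$) and a higher-order remainder of size $O(t|\psi_0^\tau|) + O(t^2)$ that can be folded into $Q_0(t,\psi_0^\tau)$. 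One has to check that $Q_0$ so defined still satisfies quadratic-type bounds analogous to \ref{eqn: form of Q}, with the understanding that $t$ is now treated as an additional small parameter on par with $\|\psi_0^\tau\|$; this is the only place where the argument genuinely differs from the unperturbed computation in Section~\ref{sec: equation for deformation to be a flowline}.
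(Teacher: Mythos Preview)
Your proposal is correct and follows essentially the same approach as the paper: the paper does not give a standalone proof of this lemma but simply states that one should ``expand the equation $F(u(\psi_-, \tpsi_0), t) = 0$ just as in Section~\ref{sec: equation for deformation to be a flowline},'' which is precisely what you outline. Your treatment of the perturbation term---isolating $tV\circ\tu_0$ as the leading contribution to $\Theta_0^\tau$ and absorbing the $O(t|\psi_0^\tau|)+O(t^2)$ corrections into $Q_0$---is exactly the right additional bookkeeping, and your remark about the support of the metric perturbation correctly explains why no $tV$ term appears in $\Theta_-^\tau$.
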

As in the $0$-gluing case, our strategy is to solve the two equations
\begin{align}
    \Theta^\tau_-(\psi_-, \tpsi_0) = 0  \label{eqn: theta minus t-gluing}, \text{ and }\\
    \Theta^\tau_0(\psi_-, \tpsi_0,t) = 0
    \label{eqn: theta 0 t-gluing}
\end{align}
iteratively. Let $\mathcal{H}_-$ denote the orthogonal complement of $\ker(D_-)$ in $H^{1,2}(u_-^* TM)$ and $\mathcal{H}^\tau_0$ denote the orthogonal complement of $\ker(D^\tau_0)$ in $H^{1,2}((u_0^{\tau})^* TM)$. We will solve Equations~\ref{eqn: theta minus t-gluing} and \ref{eqn: theta 0 t-gluing} for $\psi_- \in \cH_-$ and $\tpsi_0 \in \cH_0$. Let $\cB_{\epsilon, *}^\tau \subset \cH^\tau_*$ denote the $\epsilon$-ball for $* \in \{-,0\}$.

First, just as in Section~\ref{sec: solving for psi + and psi -}, we solve for $\psi_-$ as a function of $\psi_0$. The techniques are identical. So, we only state the analogous proposition.
\begin{prop}\label{prop: solving for theta minus t-gluing}
    For $\epsilon > 0$ and $R_-$ large enough, the following holds:
    \begin{enumerate}
        \item Given any $\tpsi_0 \in \cB_{\epsilon,0}$, there exists a unique vector field $\psi_-  \in \cB_{\epsilon, -}$ such that $\psi_- = \psi_-(\tpsi_0)$ solves \ref{eqn: theta minus t-gluing}.
        \item We get bounds on the Sobolev norm of $\psi_-$
        \begin{align}
            \|\psi_-\| \lesssim R_-^{-1}(\|\tpsi_0\|_{\supp \beta'_0} + \|\tu_0\|_{\supp \beta'_0})
        \end{align}
        \item The derivative of $\psi_-$ at a point $\tpsi_0 \in \cB_\epsilon$ defines a bounded linear functional $\mathcal{D}: \cH_0 \to \cH_-$ satisfying
        \begin{align}
            \|\mathcal{D}\eta\| \lesssim R_-^{-1} \|\eta\|.
        \end{align}
        \item The untranslated solutions $\psi_-(\psi_0) \in \cH_-$ depend implicitly on the gluing parameters $(R_-, R_0)$. When we wish to make this dependence explicit, we shall write $\psi_-(\psi_0,R_0,R_-)$.
        The derivative of $\psi_-$ with respect to $R_* \in \{R_-, R_0\}$ satisfy
        \begin{equation}
            \left\| \frac{\partial\psi_-}{\partial R_*} \right\| \lesssim \frac{1}{R_-}\left( \|\psi_0^\tau\|_{\supp \beta'_0 \cap \supp \beta_-} + \|\tu_0\|_{\supp \beta'_0 \cap \supp \beta_-} \right).
        \end{equation}
    \end{enumerate}
\end{prop}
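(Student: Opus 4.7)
The plan is to reuse verbatim the contraction-mapping-theorem argument from the proof of Proposition~\ref{prop: solving psi + and psi - as functions of psi 0}, restricted to the two-component setting. Although the full equation \ref{eqn: gradient flow equation for family of metrics gt} now depends on the parameter $t$, the functional $\Theta^\tau_-$ in \ref{defn: theta minus t-gluing} does not involve $t$: the perturbation term $tV$ is supported in a neighbourhood of $u_0(0)$, well away from $u_-$, and only appears inside $\Theta^\tau_0$. So solving Equation~\ref{eqn: theta minus t-gluing} for $\psi_-$ in terms of $\tpsi_0$ is structurally identical to the analogous step in Section~\ref{sec: solving for psi + and psi -}; in particular, the $t$-parameter plays no role in this proposition.

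First I would recast $\Theta^\tau_-(\psi_-, \tpsi_0) = 0$ as the fixed-point equation
\[
\psi_- = \mathcal{I}_-(\psi_-) := -D_-^{-1}\bigl(Q_-(\psi_-) + \beta'_0(\tu_0 + \tpsi_0)\bigr),
\]
where $D_-^{-1}: L^2(u_-^* TM) \to \cH_-$ is a bounded right inverse, which exists because $u_-$ is transversely cut out. The construction of $\beta_0$ gives $|\beta'_0| \lesssim R_-^{-1}$, while $Q_-$ satisfies $\|Q_-(\psi_-)\| \lesssim \|\psi_-\|^2$ as in \ref{eqn: form of Q}. These two estimates together with the boundedness of $D_-^{-1}$ show that for $\epsilon > 0$ sufficiently small and $R_-$ sufficiently large, $\mathcal{I}_-$ maps $\cB_{\epsilon,-}$ into itself and satisfies the contraction inequality $\|\mathcal{I}_-(\psi^1_-) - \mathcal{I}_-(\psi^2_-)\| \leq \tfrac{1}{2}\|\psi^1_- - \psi^2_-\|$. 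The contraction mapping theorem then produces the unique fixed point $\psi_-(\tpsi_0)$ asserted in part (1).

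Parts (2)--(4) follow by the same mechanical arguments used in Proposition~\ref{prop: solving psi + and psi - as functions of psi 0}. For (2), plug $\|\psi_-\| < \epsilon$ into the right-hand side of the fixed-point equation, absorb the quadratic contribution into the left-hand side, and read off the bound in terms of $\|\tu_0\|_{\supp \beta'_0}$ and $\|\tpsi_0\|_{\supp \beta'_0}$. For (3), differentiate the fixed-point identity implicitly with respect to $\tpsi_0$ to obtain $\mathcal{D} = (1 - \mathcal{D}'_-)^{-1}\mathcal{D}'_0$, and combine the contraction bound $\|\mathcal{D}'_-\| \leq 1/2$ with the pointwise estimate $\|\mathcal{D}'_0 \eta\| \lesssim R_-^{-1}\|\eta\|$ coming from $|\beta'_0| \lesssim R_-^{-1}$. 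For (4), apply the same implicit-differentiation argument to the $R_*$-dependence of the translated sections $\tu_0$ and $\tpsi_0$ inside $\mathcal{I}_-$, using the smoothing property of $D_-^{-1}$ to ensure the $R_*$-derivatives of $\psi_-$ land back in $\cH_-$ with Sobolev norm controlled exactly as claimed.

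There is no genuinely new obstacle here: the single-sided cutoff $\beta'_0$ in the two-component setup in fact makes the estimates strictly simpler than in the $0$-gluing case, since there is only one source term rather than two, and the $t$-parameter is invisible to this equation. The only mild point to watch is ensuring that the constants absorbed into $\lesssim$ do not depend on $t$ for $t \in (-\rho, \rho)$, which is immediate since $t$ does not appear in $\Theta^\tau_-$ at all.
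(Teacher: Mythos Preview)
Your proposal is correct and matches the paper's approach exactly: the paper simply states that ``the techniques are identical'' to Section~\ref{sec: solving for psi + and psi -} and omits the proof, and you have correctly identified and carried out that reduction, including the useful observation that $t$ does not enter $\Theta^\tau_-$ at all.
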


 The next step is to solve Equation~\ref{eqn: theta 0 t-gluing} for $\tpsi_0 \in \cB_\epsilon$ after substituting $\psi_- = \psi_-(\tpsi_0)$ we just obtained in Proposition~\ref{prop: solving for theta minus t-gluing}. Let us rewrite $\Theta^\tau_0$ in Equation~\ref{eqn: theta 0 t-gluing} as 
 \begin{equation}
     D^\tau_0 \tpsi_0 + F^\tau_0(\tpsi_0) = 0
 \end{equation}
 where $F^\tau_0$ consists of all the terms other than $D^\tau_0$ in $\Theta^\tau_0$, refer Equation~\ref{defn: theta 0 t-gluing}, giving
 \begin{align}
     F^\tau_0(\tpsi_0) & := \beta'_-\psi_-(\tpsi_0) 
         + \beta'_-u_- + t\beta_0 V\circ \tu_0 +   Q_0(t,\psi_0^\tau),
 \end{align}
where we consider $\psi_-$ to be the function of $\tpsi_0$ obtained in Proposition~\ref{prop: solving for theta minus t-gluing}.

Just as in Section~\ref{sec: solving for psi 0}, $D^\tau_0$ is not invertible, so we cannot directly use a contraction mapping theorem. We introduce a choice of $L^2$-orthogonal projection $\Pi$ from $L^2(u_0^* TM)$ onto $\ker D_0^* \cong \coker D_0$ (its translated version is denoted by $\Pi^\tau$). Then, to solve Equation~\ref{eqn: theta 0 t-gluing}, it is sufficient to solve the following two equations simultaneously,
\begin{align}
    D^\tau_0 \tpsi_0 + (1 - \Pi^\tau) F^\tau_0(\tpsi_0) &=0, \label{eqn: theta 0 image part t-gluing} \text{ and }\\
    \Pi^\tau F^\tau_0(\psi^\tau_0) &= 0. \label{eqn: theta 0 cokernel part t-gluing}
\end{align}
Let $\psi_*$ denote the appropriate translations of $\psi^\tau_*$ so that they are vector fields over the untranslated flowlines $u_*$. Then $\psi_*$ satisfy the translated equations
\begin{align}
    D_0 \psi_0 + (1 - \Pi) F_0(\psi_0) &=0, \label{eqn: translated theta 0 image part t-gluing} \text{ and }\\
    \Pi F_0(\psi_0) &= 0. \label{eqn: translated theta 0 cokernel part t-gluing}
\end{align}
The first equation (either in the translated version $\tpsi_0$ or the untranslated version $\psi_0)$ can be solved by our now-familiar method of creating a contraction map, namely,
\begin{equation}
    \tpsi_0 \mapsto -(D^\tau_0)^{-1}(1 - \Pi^\tau)^\tau F_0(\tpsi_0),
\end{equation}
where $(D^\tau_0)^{-1}$ denotes the right inverse of $D^\tau_0$ when restricted to $\cH^\tau_0 \to \im D^\tau_0 = \im (1 - \Pi^\tau)$. We get the following theorem, whose proof is again analogous to that of Proposition~\ref{prop:solving for theta}; hence, we omit it here.
\begin{prop}\label{prop: solving for psi 0 t-gluing}
    For each $t > 0$, the following are true for $\epsilon > 0$ small enough and $R_+, R_0$ large enough.
    \begin{enumerate}
        \item There exists a unique $\psi_0 \in \cB_{\epsilon,0}$ satisfying Equation~\ref{eqn: translated theta 0 image part t-gluing}.
        \item This $\psi_0$ satisfies, for the $\psi_-$ obtained in Proposition~\ref{prop: solving for theta minus t-gluing},
        \begin{equation}
            \|\psi_0\| \lesssim R_0^{-1}(\|\psi_-\|_{\supp \beta'_-} + \|u_-\|_{\supp \beta'_-}) + t.
        \end{equation}
        \item $\psi_0$ defines a smooth section of $(u_0)^* TM$. Additionally, $\psi_-(\psi_0)$ obtained from Proposition~\ref{prop: solving for theta minus t-gluing} a smooth section of $(u_-^*)TM$.
        \item The vector fields $\psi_0$ and $\psi_-(\psi_0)$ depend implicitly on the gluing parameters $(R_-, R_0, t)$. These dependences are smooth.

        For $R_* \in \{R_-, R_0\}$ we have
        \begin{align}
            \left\|\frac{d\psi_0}{dR_*}\right\| \lesssim (R_0)^{-1}\left( \|u_-\|_{\supp \beta_-'} + \|\psi_-\|_{\supp \beta_-'} + \left\|\frac{d\psi_-}{dR_*}\right\|_{\supp\beta_-'}\right).
        \end{align}
        \begin{align}
            \left\|\frac{d\psi_-}{dR_*}\right\| \lesssim (R_-)^{-1}\left( \|u_0^\tau\|_{\supp \beta_0'} + \|\tpsi_0\|_{\supp \beta_0'} + \left\|\frac{d\tpsi_0}{dR_*}\right\|_{\supp\beta_0'}\right).
        \end{align}
        For the $t$ derivatives, we have
        \begin{align}
        \left \|\frac{d\psi_0}{dt}\right \| \leq 1 + \frac{1}{R_0} \left\|\frac{d\psi_-}{dt}\right\|_{\supp \beta_-'}
        \end{align}
        \begin{align}
        \left \|\frac{d\psi_-}{dt}\right \| \leq \frac{1}{R_-} \left\|\frac{d\psi_0}{dt}\right\|_{\supp \beta_0'}
        \end{align}
    \end{enumerate}
\end{prop}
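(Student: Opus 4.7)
The plan is to mimic the proof of Proposition~\ref{prop:solving for theta} in the $0$-gluing case, with the new wrinkle being the source term $t\beta_0 V\circ \tu_0$ and the $t$-dependent higher-order term $Q_0(t,\psi_0^\tau)$ appearing in $F_0^\tau$. Concretely, let $(D_0^\tau)^{-1}\colon \im D_0^\tau\to \cH_0^\tau$ denote the bounded right inverse of $D_0^\tau$ restricted to its image, and define
\begin{align}
    \cI_0(\tpsi_0) := -(D_0^\tau)^{-1}(1-\Pi^\tau)F_0^\tau(\tpsi_0),
\end{align}
where inside $F_0^\tau$ we substitute the $\psi_-=\psi_-(\tpsi_0)$ furnished by Proposition~\ref{prop: solving for theta minus t-gluing}. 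The goal is to show that $\cI_0$ is a contraction of $\cB_{\epsilon,0}^\tau$ into itself for suitable $\epsilon$, $R_-$, $R_0$, $t$, so that its unique fixed point solves \ref{eqn: theta 0 image part t-gluing}.

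For part (1), the key estimates are: (i) the term $\beta'_-(\psi_-+u_-)$ is controlled by $R_0^{-1}(\|u_-\|_{\supp\beta'_-}+\|\psi_-\|_{\supp\beta'_-})$, (ii) the new source term $t\beta_0 V\circ \tu_0$ is of size $\lesssim t$ since $V$ is smooth and compactly supported near $u_0(0)$, and (iii) the quadratic remainder $Q_0(t,\tpsi_0)$ satisfies $\|Q_0(t,\tpsi_0^1)-Q_0(t,\tpsi_0^2)\|\lesssim (\epsilon+|t|)\|\tpsi_0^1-\tpsi_0^2\|$. Combining these with the bound on $\partial\psi_-/\partial \tpsi_0$ from Proposition~\ref{prop: solving for theta minus t-gluing}(3), the contraction constant is $\lesssim C\epsilon + R_-^{-1} + |t|$, which is less than $1/2$ once $\epsilon$, $1/R_-$, $|t|$ are small enough. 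Part (2) follows by evaluating the fixed-point equation at $0$ and iterating the inequality to absorb the $\psi_0$ contributions on the right-hand side, yielding $\|\psi_0\|\lesssim R_0^{-1}(\|\psi_-\|_{\supp\beta'_-}+\|u_-\|_{\supp\beta'_-})+t$.

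For (3), smoothness of $\psi_0,\psi_-$ in $s$ follows by a now-standard bootstrapping argument applied to the coupled system $\Theta_-=\Theta_0=0$: starting from $W^{1,2}$ solutions, the equations $D_0\psi_0=-(\beta'_-(\psi_-+u_-)+tV\circ \tu_0+Q_0)$ and $D_-\psi_-=-(\beta'_0(\tpsi_0+\tu_0)+Q_-)$ have right-hand sides with one more degree of regularity than their left-hand sides, so $\psi_0,\psi_-\in W^{k+1,2}$ whenever they are in $W^{k,2}$. Smoothness in the parameters $(R_-,R_0,t)$ follows from the implicit function theorem applied to the fixed-point equation $\psi_0=\cI_0(\psi_0;R_-,R_0,t)$, since $1-\partial_{\psi_0}\cI_0$ is invertible with bounded inverse (its norm is controlled by the contraction estimate).

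The Sobolev-norm bounds on the $R_*$-derivatives are derived in the same way as in Proposition~\ref{prop:solving for theta}(4): differentiating the fixed-point equations, applying the bounded inverse of $(1-\partial_{\psi_0}\cI_0)$, and then using the form of the cutoff functions $\beta_0^\tau,\beta_-^\tau$ to see that only $R_*$-derivatives of $\psi_-,u_-$ supported on $\supp\beta_-'$ contribute. The main new subtlety, and the step I expect to require the most care, is the bound on $\|d\psi_0/dt\|$. Differentiating the fixed-point equation in $t$ yields
\begin{align}
    \tfrac{d\psi_0}{dt} = -(D_0^\tau)^{-1}(1-\Pi^\tau)\!\left(\beta_0 V\circ \tu_0 + \partial_t Q_0 + \beta'_- \tfrac{d\psi_-}{dt}\right) + \partial_{\psi_0}\cI_0\cdot \tfrac{d\psi_0}{dt},
\end{align}
so that $(1-\partial_{\psi_0}\cI_0)\cdot d\psi_0/dt$ is bounded by a constant independent of $R_*$ (because $V\circ\tu_0$ has bounded $L^2$-norm over its compact support, unaffected by any factor of $R_0^{-1}$), plus an $R_0^{-1}\|d\psi_-/dt\|_{\supp\beta'_-}$ correction. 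Inverting the contraction factor gives $\|d\psi_0/dt\|\lesssim 1+R_0^{-1}\|d\psi_-/dt\|_{\supp\beta_-'}$, exactly the bound stated. The companion estimate $\|d\psi_-/dt\|\lesssim R_-^{-1}\|d\tpsi_0/dt\|_{\supp\beta_0'}$ comes from differentiating the fixed-point equation for $\psi_-$ in $t$ and noting that $\Theta_-$ contains no explicit $t$-dependence, so the only $t$-derivative appearing on the right-hand side is $\beta'_0\, d\tpsi_0/dt$. The obstacle is the $O(1)$ contribution from $V\circ\tu_0$, which prevents the naive expectation that $d\psi_0/dt$ be small; this $O(1)$ term will be precisely what drives the non-trivial linearized obstruction section in the subsequent $t$-gluing analysis.
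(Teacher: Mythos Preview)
Your proposal is correct and follows exactly the approach the paper indicates: the paper explicitly omits the proof of this proposition, stating only that it is ``again analogous to that of Proposition~\ref{prop:solving for theta}''. Your sketch carries out precisely this analogy via the contraction mapping $\cI_0$, and you correctly isolate the one genuinely new feature --- the $tV\circ\tu_0$ source term producing an $O(1)$ contribution to $d\psi_0/dt$ --- which is exactly the content of the paper's subsequent Remark.
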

\begin{rem}
In contrast to the $0$-gluing case, taking $t$-derivatives yields terms of order $1$ rather than terms that go to zero. So, as the pregluing parameters go to $\infty$, the $t$ derivative of $\psi_0$ is of order 1.
\end{rem}

We now move on to Equation~\ref{eqn: theta 0 cokernel part t-gluing}. As in Section~\ref {sec: obstruction section and gluing map}, we observe that to find a solution of Equation~\ref{eqn: operator for t-gluing}, it is enough to find a zero of Equation~\ref{eqn: theta 0 cokernel part t-gluing}. So, we define this as the ``obstruction section" and find its zeroes. The gluing map, as we now define, restricted to the zeroes of the obstruction section, will define the required ``gluing" and conclude the proof of Theorem~\ref{thm: t-gluing}.

As before, we first get rid of the redundancy of the two pregluing parameters $(R_-,R_0)$ by setting $R_-=R_0/A$ for large enough $A$. In particular, in light of the analogous estimates in the $0$-gluing section, we should set $\lambda_0^+R_0>\lambda_1^-R_-$.

Let $r$ be larger than the minimum values of $R_-$ and $R_0$ given by Propositions~\ref{prop: solving for theta minus t-gluing} and \ref{prop: solving for psi 0 t-gluing}.
To look at $\Pi F_0(\psi_0)$ from Equation~\ref{eqn: translated theta 0 cokernel part t-gluing} as a section of an appropriate bundle, define the {\bf obstruction bundle}, $\ob \to [r, \infty) \times t$ as the trivial bundle where the fiber over any $(R_0, t) \in [r, \infty) \times (-\rho, \rho)$ is
\begin{align}
    \ob_{(R_0, t)} = \hom(\coker(D_{u_0}), \R).
\end{align}
We are now ready to define the obstruction section, which is really a different perspective on Equation~\ref{eqn: theta 0 cokernel part t-gluing}.
\begin{defn}
    Define a section $\os:[r, \infty) \times (-\rho, \rho) \to \ob$, call the {\bf obstruction section}, as
    \begin{align}
        \os(R_0, t)(\sigma_0) := \langle \sigma_0, \Pi F_0(\psi_0(R_-, R_0, t))\rangle \text{ for all } \sigma \in \coker(D_{u_0}),
    \end{align}
    where $\psi_0(R_-, R_0, t)$ is the solution to Equation~\ref{eqn: theta 0 image part t-gluing} obtained from Propositin~\ref{prop: solving for psi 0 t-gluing} for the parameters $(R_-, R_0, t) = (R_0/A, R_0,t)$ for a fixed large integer $A \in \Z$ and $F_0$ is the corresponding term in Equation~\ref{eqn: theta 0 cokernel part t-gluing}.
\end{defn}
The obstruction section is smooth just like in Proposition~\ref{prop:smootheness of obstruction section}, except we need to restrict the perturbation parameter to either positive or negative. Similar to Lemma~\ref{lem: transversality of obstruction section}, the obstruction sections will also be transverse to the zero section. 
\begin{prop}\label{prop: smoothness and transversality of obstruction section t-gluing}
Let $\os_+ := \os|_{[r,\infty) \times (0,\rho)}$ and $\os_- := \os|_{[r,\infty) \times (-\rho,0)}$ denote two restrictions of the obstruction section.
    The sections \begin{align}
        \os_+:[r, \infty) \times (0, \rho) \to \ob \text{ and }
        \os_-:[r, \infty) \times (-\rho, 0) \to \ob
    \end{align} are smooth sections. The sections $\mathfrak{s}_\pm$ are also transverse to the zero sections.
\end{prop}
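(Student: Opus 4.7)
The plan is to mirror the structure of Proposition~\ref{prop:smootheness of obstruction section} and Lemma~\ref{lem: transversality of obstruction section} from the $0$-gluing case. Smoothness of $\os_\pm$ reduces, via $\os(R_0,t)(\sigma_0) = \langle\sigma_0,F_0(\psi_0(R_-,R_0,t))\rangle$ and the convention $R_- = R_0/A$, to joint smoothness of $(\psi_0,\psi_-)$ in $(R_0,t)$ on each open half-strip $\{t>0\}$ or $\{t<0\}$. Proposition~\ref{prop: solving for psi 0 t-gluing}(4) provides exactly this smoothness, and a bootstrapping argument identical to the one used in Proposition~\ref{prop:smootheness of obstruction section} extends it to all orders. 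Since $\Pi$ is independent of the pregluing parameters, composition preserves smoothness and $\os_\pm$ is $C^\infty$.

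For transversality, I would introduce a linearized obstruction section
\[
\os_0(R_0,t)(\sigma_0) := \langle\sigma_0,\beta_-^{\tau\prime}u_-\rangle + t\,\langle\sigma_0,V\circ u_0\rangle,
\]
collecting the pregluing error from the single breaking end together with the first-order perturbation term in~\eqref{eqn_perturbation_of_f}. Using the asymptotic expansions of $u_-$ and $\sigma_0$ near $x_0$, together with the hypothesis $c := \int\langle\sigma_0,V\circ u_0\rangle\,ds > 0$, the leading behaviour of $\os_0$ is
\[
\os_{00}(R_0,t) = -\langle a_-,b_-\rangle\,e^{-\lambda_0^+(R_-+R_0)} + c\,t
\]
modulo terms exponentially smaller in $R_0$. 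Its $t$-derivative is the constant $c\neq 0$, so $\os_{00}$ is transverse to the zero section; it has a unique zero in $\{t>0\}$ when $\langle a_-,b_-\rangle > 0$ and in $\{t<0\}$ when $\langle a_-,b_-\rangle < 0$, with no zero in the opposite half-strip. This gives the skeleton of the conclusion and also explains the sign dichotomy in Theorem~\ref{thm: t-gluing}.

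The analytic content is promoting this to the nonlinear $\os$ by showing $\os_\pm$ is ``$C^1$-close'' to $\os_0$ in the sense of Sections~\ref{sec: C0 closeness of linearized section} and~\ref{sec: C1 estimates}. The $C^0$-estimate carries over essentially verbatim: $\os - \os_0$ consists of $\langle\sigma_0,\beta_-^{\tau\prime}\psi_-\rangle$ together with $\langle\sigma_0,Q_0(t,\psi_0)\rangle$, and both are much smaller than $e^{-\lambda_0^+(R_-+R_0)} + |t|$ once one invokes the autonomous-region exponential decay of $\psi_-$ from Proposition~\ref{prop:autonomous} and the quadratic structure of $Q_0$ combined with the $O(|t|)$ bound on $\|\psi_0\|$ from Proposition~\ref{prop: solving for psi 0 t-gluing}(2). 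The $R_0$-derivative estimate repeats Proposition~\ref{lem: linearized obs section is C1 close} almost word-for-word.

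The main obstacle is the $t$-derivative estimate. Unlike the pregluing parameters, the remark following Proposition~\ref{prop: solving for psi 0 t-gluing} shows that $\|\partial_t\psi_0\|$ is only of order $O(1)$ as $R_0\to\infty$, not small. Fortunately $\partial_t\os_0 = c$ is also $O(1)$, so what is really needed is the sharper statement $|\partial_t(\os-\os_0)| \ll c$ rather than mere decay to zero. The key input is that away from $\supp\beta_0'$ the vector field $\psi_-$ satisfies the autonomous linear equation $D_-\psi_- = 0$; differentiating in $t$ yields $D_-(\partial_t\psi_-) = 0$ on the same region, and the analogue of Proposition~\ref{prop:decay_R_0} delivers exponential decay of $\partial_t\psi_-$ on the support of $\beta_-'$. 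The $\partial_t Q_0$ contribution is controlled via $|\partial_t Q_0(t,\psi_0)| \lesssim |\psi_0||\partial_t\psi_0| + |\psi_0|^2$ combined with the $C^0$-bound on $\|\psi_0\|$. Once $C^1$-closeness is established, $(\os_\pm)^{-1}(0)$ is a small $C^1$-perturbation of $(\os_{00})^{-1}(0)$, hence a smooth $1$-manifold transversely cut out, which is the content of Proposition~\ref{prop: smoothness and transversality of obstruction section t-gluing}.
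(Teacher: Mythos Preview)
Your proposal is correct and follows essentially the same approach as the paper. The paper's own treatment of this proposition is extremely terse: smoothness is handled as in Proposition~\ref{prop:smootheness of obstruction section}, and for transversality the paper simply states that it ``comes directly from showing that its $t$-derivative is bounded away from zero,'' deferring the actual estimates to the proof of Theorem~\ref{thm: t-gluing}. Your write-up spells out precisely how to verify that $\partial_t\os$ is bounded away from zero---via $C^1$-closeness to $\os_0$, whose $t$-derivative is the nonzero constant $c$---and correctly identifies that the new wrinkle compared to the $0$-gluing case is the $O(1)$ size of $\partial_t\psi_0$, which is harmless because the target $\partial_t\os_0 = c$ is also $O(1)$.
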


The fact that $\mathfrak{s}_\pm$ are transverse to zero comes directly from showing that its $t$-derivative is bounded away from zero. We still need to count how many zeroes $\mathfrak{s}_\pm$ has given a fixed $t$.
Nonetheless, Proposition~\ref{prop: smoothness and transversality of obstruction section t-gluing} implies that $\os_\pm^{-1}(0)$ are manifolds. So, we can define a  ``gluing" map by Definition~\ref{defn: t-gluing maps} on $\os_\pm^{-1}(0)$.
\begin{defn}\label{defn: t-gluing maps}
Define the {\bf $\mathbf{(R_-, R_0, t)}$-gluing}, denoted by $u(R_-, R_0;t)$ to be the deformed pregluing~\ref{eqn: t-gluing pregluing} when the $\psi_-$ and $\psi_0$ are those obtained with the parameters $(R_-, R_0)$ and perturbation parameter $t$ in Propositions~\ref{prop: solving for theta minus t-gluing} and \ref{prop: solving for psi 0 t-gluing}.
    Define two {\bf gluing maps} 
    \begin{align}
        G_\pm : \os_\pm^{-1}(0) \to \ms_{x_{-1}, x_2}, \quad (R_0, t) \mapsto u(R_0/A, R_0;t).\label{defn: t-gluing map}
    \end{align}
\end{defn}

We now want to show that the gluing maps above capture all the flowlines ``close to breaking" to the broken flowline $(u_-, u_0)$. To do this, we adapt definitions from the previous section rather than rewrite similar ones for brevity. Analogous to Definition~\ref{defn:open nbhd around broken u}, define the {\bf space of paths close to $(u_-, u_0)$}, $\tilde G_\delta(u_+, u_0)$, to be concatenated paths $v_- \star v_0$ satisfying analogous ``closeness" properties. Let the {\bf space of $g_t$-flowlines close to $(u_-, u_0)$} be the subset $G^-_\delta(u_-, u_0) \subset \tilde G_\delta(u_-, u_0) \times (-\rho, 0)$ or $G^+_\delta(u_-, u_0) \subset \tilde G_\delta(u_-, u_0) \times ( 0, \rho)$ consisting of tuples $((v_-, v_0), t)$ such that $v_- \star v_0$ is a flowline of $-\nabla_t f$. Given $\delta > 0$, denote the space of paths close to breaking to $(u_-, u_0)$ that we obtain in the image of the gluing map as $U_\delta^\pm = G^{-1}_\pm (G_\delta^\pm)$. Let $U_\delta = U^+_\delta \cup U^-_\delta$. We now have the parametrization result analogous to Theorem~\ref{thm:gluing map is homeo}. The proof contains similar ideas to those in the proof of Theorem~\ref{thm:gluing map is homeo}, so we omit redoing them.
\begin{thm}If $r$ is sufficiently large and $\rho$ is sufficiently small, then
    \begin{enumerate}
        \item[(a)] 
        the entire base space
            $[r,\infty) \times ((-\rho, 0) \cup (0, \rho)) \subset U_\delta$, and        
        \item[(b)] the gluing maps \ref{defn: t-gluing maps} restrict to homeomorphisms
        \begin{align}
            G_\pm: \os^{-1}_\pm (0) \cap U_\delta^\pm \to  G_\delta^\pm(u_+, u_0)/\mathbb{R}.
        \end{align}
    \end{enumerate}
\end{thm}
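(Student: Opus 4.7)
The plan is to adapt the proof of Theorem~\ref{thm:gluing map is homeo} to the $t$-gluing setting, where the base $[r,\infty)\times((-\rho,0)\cup(0,\rho))$ carries one geometric gluing parameter $R_0$ and one metric-perturbation parameter $t$ playing parallel roles. I would prove continuity, injectivity, and surjectivity of $G_\pm$ separately, and extract Part (a) from the norm estimates already in place.

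For Part (a) I would show that for $R_0\geq r$ with $r$ large and $|t|\leq \rho$ with $\rho$ small, the deformed pregluing $u(R_0/A,R_0;t)$ lies in $\widetilde G_\delta(u_-,u_0)$. The pregluing $u_\#$ manifestly decomposes as $v_-\star v_0$ matching the analogue of Definition~\ref{defn:open nbhd around broken u}, while Propositions~\ref{prop: solving for theta minus t-gluing}(2) and \ref{prop: solving for psi 0 t-gluing}(2) give $\|\psi_-\|,\|\psi_0\|\lesssim R_0^{-1}+|t|$; Sobolev embedding $W^{1,2}\hookrightarrow C^0$ then bounds the pointwise perturbations by $\delta$. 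For Part (b), continuity of $G_\pm$ follows from the smoothness of $\os_\pm$ (Proposition~\ref{prop: smoothness and transversality of obstruction section t-gluing}) combined with the $R_*$- and $t$-derivative bounds of Proposition~\ref{prop: solving for psi 0 t-gluing}(4).

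Injectivity of $G_\pm$ splits into two subcases. First, the $t$-coordinate is determined by the image: if $u(R_0,t)$ and $u(\tilde R_0,\tilde t)$ agree up to reparametrization, the common curve is a flowline of both $\nabla_{g_t}f$ and $\nabla_{g_{\tilde t}}f$; at any point $s^*$ on the curve where $V(u(s^*))\neq 0$ (such a point exists because the glued curve passes through the support of $V$, which by the construction around Equation~\ref{eqn_perturbation_of_f} sits in a neighbourhood of $u_0(0)$), this forces $(t-\tilde t)V(u(s^*))=O(t^2+\tilde t^2)$, so $t=\tilde t$ once $\rho$ is small. With $t$ fixed, matching of $R_0$ proceeds exactly as in Lemma~\ref{lem:injectivity of gluing isomorphism}: pick $p_0\in \im u_0$, use a local noncollapse estimate analogous to Equation~\ref{eqn: inequaltiy of distance in injectivity proof}, and compare with the bound $\|\partial_{R_0}\psi_0\|\lesssim e^{-\Lambda r}$ coming from Proposition~\ref{prop: solving for psi 0 t-gluing}(4).

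Surjectivity is the main technical step and follows the template of Lemma~\ref{lem:surjectivity of the gluing isomorphism}. Given $((v_-,v_0),t)\in G_\delta^\pm(u_-,u_0)$, standard gluing analysis supplies an approximate $R_0$ and a small vector field $\eta_*\in W^{1,2}(u_\#^*TM)$ with $v=\exp_{u_\#}(\eta_*)$ up to reparametrization; off the overlap interval $\eta_*$ already satisfies $\Theta_-^\tau=0$ or $\Theta_0^\tau=0$ respectively. I would then extend $\eta_*$ across the overlap to global $\psi_-$ and $\psi_0^\tau$ using Proposition~\ref{prop_semi_infinite_extension}, which applies verbatim because $V$ is supported away from both critical points, so the near-critical-point linearizations for $g_t$ coincide with those for $g$. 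Orthogonality of $\psi_-$ to $\ker D_-$ is then arranged by a global $s$-translation of $v$ (adding a multiple of $w_-\in\ker D_-$), and orthogonality of $\psi_0^\tau$ to $\ker D_0^\tau$ by a small adjustment of $R_0$ (adding a controlled multiple of $w_0$); an implicit function theorem argument, using the nonvanishing pairings produced by these adjustments, pins down a unique admissible $(R_0,S_-)$. The hard part will be the transversality statement of Proposition~\ref{prop: smoothness and transversality of obstruction section t-gluing} underwriting both continuity and this final step: at leading order $\partial_t\os$ equals $\int\langle\sigma_0,V\rangle\,ds$, which is positive by construction, but controlling the nonlinear corrections from $Q_0(t,\psi_0^\tau)$ and the implicit $t$-dependence of $\psi_-$ requires a $C^1$-closeness argument in the spirit of Section~\ref{sec: C1 estimates}, now with $\partial_t$ in place of $\partial_{R_0^-}$, and this is the step that does not translate directly from the $0$-gluing material.
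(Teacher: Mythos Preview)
Your proposal is correct and follows exactly the route the paper indicates: the paper explicitly omits the proof, saying only that it ``contains similar ideas to those in the proof of Theorem~\ref{thm:gluing map is homeo}, so we omit redoing them.'' Your adaptation---Part (a) from the norm estimates in Propositions~\ref{prop: solving for theta minus t-gluing} and \ref{prop: solving for psi 0 t-gluing}, injectivity via a local noncollapse estimate as in Lemma~\ref{lem:injectivity of gluing isomorphism}, and surjectivity by extending across the overlap using Proposition~\ref{prop_semi_infinite_extension} and then adjusting the single gluing parameter $R_0$ together with a global translation to land in $\ker D_-^\perp \times \ker D_0^\perp$---is precisely the intended analogy, with the correct count of two constraints against two free parameters.

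Two minor remarks. First, since the target $G_\delta^\pm(u_-,u_0)$ is defined as a subset of $\widetilde G_\delta(u_-,u_0)\times(\pm(0,\rho))$, the $t$-coordinate is already part of the data of a point in the target, so your argument that $t$ is recovered from the flowline equation is not strictly needed (though it is correct and harmless). Second, your closing paragraph flags the transversality statement of Proposition~\ref{prop: smoothness and transversality of obstruction section t-gluing} as the ``hard part'' of this theorem, but in the paper's logic that proposition is a separate prerequisite, already established by the observation that $\partial_t\os \approx \int\langle\sigma_0,V\rangle\,ds > 0$; you may simply invoke it rather than re-prove it here.
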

\subsection{The linearized section $\os_0$}
Just as in the unperturbed case in Section~\ref{sec: obg 0-gluing}, we would like to ``count" the zeroes of the obstruction sections $\os_\pm$, but counting them directly is difficult. So, we define similar ``linearized" obstruction sections.

\begin{defn}
    Define the linearized section by defining how it pairs with the element $\sigma_0$ as, 
    \begin{align}
        \os^-_{0}: [r, \infty) \times (-\rho, 0) &\to \ob, \quad \os^+_0: [r, \infty) \times ( 0, \rho) \to \ob\\
        \os^\pm_0(R_0, t)(\sigma_0) &:= T_{2 + R_- + R_0} \langle \beta'_- u_- + t\beta_0 V\circ \tu_0, \sigma^\tau_0\rangle. 
    \end{align}
\end{defn}
Having defined the linearized section, we are ready to complete the proof of Theorem~\ref{thm: t-gluing}.
\begin{proof}[Proof of Theorem~\ref{thm: t-gluing}]

    We can define $\os_{00}$ as
    \begin{equation}
        \os_{00}(R_0, t) := - \langle b_-, a_-\rangle e^{-\lambda_0^+ { ( R_- + R_0)}} + t\langle V, \sigma_0 \rangle.
    \end{equation} The same argument as Proposition~\ref{thm: gluing} shows it suffices to compare $\mathfrak{s}$ with $\mathfrak{s}_{00}$ instead of $\mathfrak{s}_0$ and show these two are ``$C^1$-close'' or ``$C^0$-close''. Given $R_-$ and $R_0$, $\os_{00}(R_0, t) = 0$ if and only if
    \begin{equation}\label{eqn: t solving linearized os}
        t = \frac{ \langle b_-, a_-\rangle e^{-\lambda_0^+ { ( R_- + R_0)}}}{ \langle X, \sigma \rangle}. 
    \end{equation}
    This immediately tells us, as $\langle V, \sigma_0\rangle > 0$ from our choices, that we get a one-parameter family of solutions given by Equation~\ref{eqn: t solving linearized os} for $\os_0^+$ and $(\os_0^-)^{-1}(0) = \emptyset$ for $\langle b_-, a_-\rangle > 0$ and vice-versa for $\langle b_-, a_-\rangle < 0$. 

     The next step of the proof is to show that for $|t|$ sufficiently small, the linearized section and the obstruction section, both viewed as functions of $R_0$, have the same number of zeroes. In the case of $0$-gluing we achieved this by showing the two are ``$C^1$-close'' to each other. Here, the setup is slightly different, so we sketch the strategy.

    In the case $\mathfrak{s}_{00}$ does not have any zeroes, the proof follows by showing all the other terms that appear in $\mathfrak{s}$ are much smaller than $e^{-\lambda_0^+  ( R_- + R_0)} + t$ by exponential factors.
    In particular, we need to estimate the norms of the terms 
    \[
    \langle \sigma_0^\tau, \beta_-'\psi_-\rangle, \quad \langle Q_0(t,\tpsi_-), \sigma_0^\tau\rangle.
    \]
    The same exponential decay estimates in Section \ref{sec: C0 closeness of linearized section} also show the nonlinear section $\mathfrak{s}$ does not have zeroes.

    In the case where $\mathfrak{s}_{00}$ has a unique zero, after setting all the appearing constants to $1$, the full obstruction section takes the form
    \[
    \mathfrak{s} = e^{-\lambda_0^+(1+1/A)R_0} -t +L(t,R_0) +Q(t,R_0)
    \]
    where $L(t,R_0) =\langle \sigma_0^\tau, \beta_-'\psi_-\rangle $ and $Q(t,R_0) =\langle Q_0(t,\tpsi_-), \sigma_0^\tau\rangle $ are smooth functions of $(t,R_0)$.
    
    If we take the $R_0$ derivative of $\os_{00}$ we see it does not change sign, so the zero of $\os_{00}$ is unique.

    Running the same estimates, we note that we have
    \[
    L\leq te^{-\eta R_0} +e^{-\eta R} e^{-\lambda_0^+(1+1/A)R_0}, \quad 
    \frac{dL}{dR_0} \leq t e^{-\eta R_0}+ e^{-\eta R} e^{-\lambda_0^+(1+1/A)R_0}\] for some $\eta>0$.
    We also have
    \begin{align}
    Q(t,R_0) &\leq t^2 + e^{-\eta R_0} e^{-\lambda_0^+(1+1/A)R_0} +te^{-\eta R_0},\\ \frac{dQ}{dR_0} &\leq e^{-\eta R_0} e^{-\lambda_0^+(1+1/A)R_0} + te^{-\eta R_0} +t^2
    \end{align}
    
    It could be the case that $\eta<(1+1/A)\lambda_0^+$, so it's not a priori obvious that for every value of $R_0$, the derivative of the second term $F$ or the third term $G$ is much smaller than the first term.

    This is remedied by our key observation that to show the zero of $\mathfrak{s}$ is unique, it suffices that its derivative at any of its zeroes has the same sign as the derivative of $\os_{00}$ (which is nonvanishing). To be more precise, for $t$ very small, we need to show 
    \[
    \left|\frac{dL}{dR_0}\right|, \left |\frac{dQ}{dR_0}\right |\ll e^{-\lambda_0^+(1+1/A)R_0}
    \]
    only for $e^{-\lambda_0^+(1+1/A)R_0} \in [(1-\epsilon)t,(1+\epsilon)t]$ since the zero must appear\footnote{The correct phrasing is for any $\epsilon >0$, if $t$ is sufficiently small the zero of $\os$ must occur in the interval $e^{-\lambda_0^+(1+1/A)R_0} \in [(1-\epsilon)t,(1+\epsilon)t]$} in this range of $R_0$, but in this range
    \[
    \frac{d\os_{00}}{dR_0} \sim e^{-\lambda_0^+(1+1/A)R_0},\quad 
    \left|\frac{dL}{dt}\right| \sim e^{-\lambda_0^+(1+1/A)R_0} e^{-\eta R_0} 
    \]
    Similar exponential decay estimates also show that
    \[
    \left |\frac{dQ}{dR_0}\right |\leq e^{-\lambda_0^+(1+1/A)R_0}e^{-\eta R_0},
    \]
    and our conclusion follows.
\end{proof}
The above $t$-gluing can be extended to multiple-component flowlines. Such flowlines can appear in the compactification on moduli spaces of flowlines as seen in Lemma~\ref{lem: compactification}. Unfortunately, the asymptotic relations no longer look as nice as in Theorem~\ref{thm: t-gluing}. We get one equation for each non-tranversely cutout flowline.

We first describe a prototypical example, and then state a Theorem.
\begin{figure}[h]
    \centering
    \includegraphics[scale=0.5]{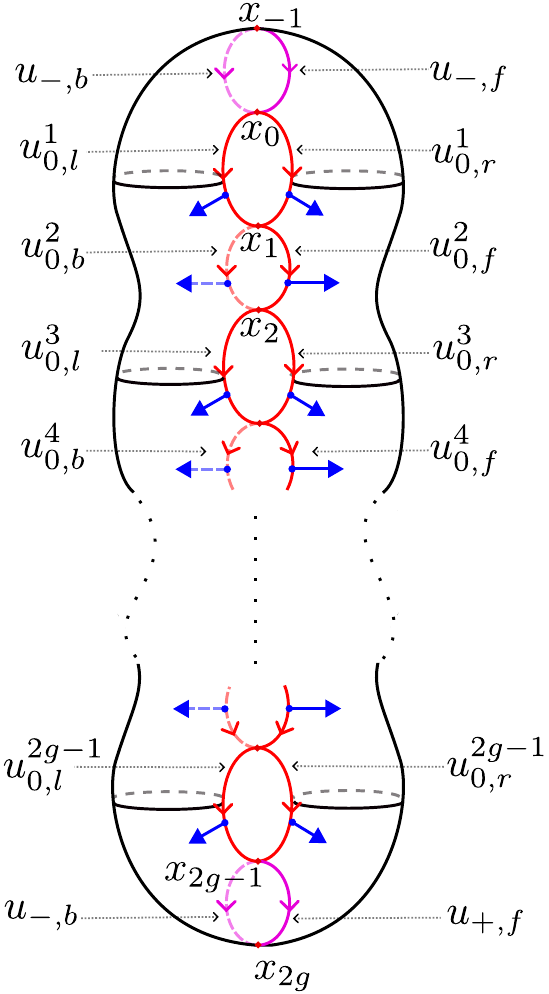}
    \caption{Genus $g$ surface embedded in $\R^3$ symmetrically with respect to reflection $x \mapsto -x$. Red flowlines are not transversely cut out. Blue vectors represent the choice of cokernel elements.}
    \label{fig:height function on genus g surface}
\end{figure}
\begin{ex}
    Consider the genus $g$ surface $\Sigma_g$ embedded in $\R^3$ symmetric with respect to the reflection $x \mapsto -x$ as shown in Figure~\ref{fig:height function on genus g surface}. Let the height function, that is, the projection to the $z$-coordinate, be the Morse function $f$ and consider the metric $g$ obtained from restricting the standard Euclidean metric of $\R^3$. In keeping with the simplifications of this paper, we actually slightly modify the metric to make it Euclidean in each Morse neighbourhood of the critical point. 
    
    We get $2g+2$ critical points, of which the maximum $x_0$ is of index $2$, the minimum $x_{2g+2}$ is of index $0$, and the rest all have index equal to $1$. Notice we have $4g-2$ non-transversely cut out flowlines. 

    For each $j = 1, \dots, 2g - 1$, some broken flowlines will be $t$-gluable depending on the choices of the perturbation of the metric near each of the non-transversely cut out flowlines.
\end{ex}

We consider the bifurcation analysis of a broken flowline built from a single transverse flowline followed by $m$ consecutive non-transverse gradient flowlines. We write down the combinatorial criteria that predict whether this broken flowline glues after the perturbation or disappears.  We still have to restrict to the case when the maximum dimension of any cokernel is $1$. We leave this as a Theorem without proof, but only remark that the proof would be analogous to that of Theorem~\ref{thm: t-gluing}\footnote{The analogue of this theorem in the case of circle valued Morse theory is discussed in \cite{Hutchings_blog}.}.
\begin{thm}\label{thm: multi level t-gluing}
    For $(f,g)$ a pair of a Morse function and a metric, let 
    \begin{align}
        u_- \in \ms(x_{-1}, x_0), \quad u_0^j \in \ms(x_{j-1}, x_j) \text{ for } j = 1, \dots, m,
    \end{align}
    with 
    \begin{align}
        \ind(x_{-1}) = k + 1, \quad \ind(x_j) = k \text{ for } j = 0, \dots, m.
    \end{align}
    For $j = 0, \dots, k-1$, let $\lambda_j^+$ be the smallest positive eigenvalue and $\lambda_j^-$ be the largest (least negative) negative eigenvalue of $\hess_{x_j} f$. For $j = 1, \dots, m$, fix $v_j \in (T_{u_0^j(0)} W^u(x_{j-1}) \cap T_{u_0^j(0)} W^s(x_j))^\perp$ and denote by $\sigma_0^j \in \coker D_{u_0^j}$ the corresponding cokernel element to $v_j$ under the identification $(T_{u_0^j(0)} W^u(x_{j-1}) \cap T_{u_0^j(0)} W^s(x_j))^\perp \cong \coker D_{u_0^j}$. 

    Let $g_t$ denote a $ t$-dependent perturbation of the metric supported away from the critical points and transversely cut out index $1$ gradient flowlines. We assume 
    \begin{align}
        \nabla_{g_t} f = \nabla_g f + tV +O(t^2).
    \end{align}

    Assume there exists $b^-_{j-1} \in T_{x_{j-1}} M$ and $b^+_j \in T_{x_{j}} M$  such that
    \begin{align}
        \sigma_0^j &=  e^{\lambda_{j-1}^+ s}b^-_{j-1} + \text{higher order terms}&\text{ for } s < -1,\\
        \sigma_0^j &=  e^{\lambda_{j}^- s}b^+_{j} +\text{higher order terms} &\text{ for } s > 1.
    \end{align}
    Similarly, assume we have $a^+_0\in T_{x_0}M$, $a_j^\pm \in T_{x_j}M$ such that 
    \begin{align}
        u_- &= e^{-\lambda_0^+ s}a_0^+ + \text{higher order terms}&\text{ for } s > 1, \\
        u_0^j &=  e^{-\lambda_{j-1}^- s}a_{j-1}^- + \text{higher order terms} &\text{ for } s < -1, , j = 1, \dots, m.\\
        u_0^j &= e^{-\lambda_j^+ s} a_j^+ +  \text{higher order terms}&\text{ for } s > 1, j = 1, \dots, m.
    \end{align}
Assume that $\langle a_0^+, b_0^-\rangle$, $\langle a_j^-, b_j^+\rangle$, and $\langle a_{j}^+, b_j^- \rangle$ are non-zero for $j = 0, \dots, m$.
    Then, there exists a one-parametric family $u_t \in \ms(x_{-1}, x_m; g_t)$ if and only  if there exists $\rho > 0$ small enough and $r$ sufficiently large such that for all $t \in (-\rho, 0)$ or $t \in (0, \rho)$, there exist $R_1,...,R_m>r$ satisfying all of the following equations:
    \begin{align}
        -e^{-\lambda_0^+ R_1}\langle a_0^+, b_0^-\rangle + e^{-\lambda_1^- R_2}\langle a_1^-, b_1^+\rangle + t\langle V,\sigma_0^1\rangle  &= 0;\\
        -e^{-\lambda_{j}^+ R_{j+1}}\langle a_{j}^+, b_j^-\rangle + e^{-\lambda_j^- R_{j+2}}\langle a_{j+1}^-, b_{j+1}^+\rangle + t\langle V, \sigma_0^{j+1}\rangle &= 0 \text{ for all } j = 1, \dots, m-2;\\
        e^{-\lambda_{m-1}^+ R_m}\langle a_{m-1}^+, b_{m-1}^-\rangle + t\langle V,\sigma_0^m\rangle &= 0.
    \end{align}      
\end{thm}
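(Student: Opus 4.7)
The strategy is a direct extension of the proof of Theorem~\ref{thm: t-gluing} to the multi-level case. We begin by constructing the pregluing $u_\#$ of $(u_-, u_0^1, \ldots, u_0^m)$ using $m$ gluing parameters $R_1, \ldots, R_m$, where $R_j$ is the neck length between the $(j-1)$-th and $j$-th middle components (and $R_1$ is the neck between $u_-$ and $u_0^1$). We employ cutoff functions $\beta_-, \beta_0^1, \ldots, \beta_0^m$ of the same shape as in Definition~\ref{defn: cutoff functions}, with asymmetric profiles chosen according to the eigenvalues $\lambda_j^\pm$ of the Hessians at the intermediate critical points. We then deform the pregluing by vector fields $\psi_-, \psi_0^1, \ldots, \psi_0^m$ on the respective pullback bundles. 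Expanding the equation $F(\exp_{u_\#}(\sum\beta_0^j\psi_0^j+\beta_-\psi_-), t) = 0$ as in Section~\ref{sec: equation for deformation to be a flowline} decomposes it into operators $\Theta_-$ and $\Theta_0^j$ for $j = 1, \ldots, m$, where each $\Theta_0^j$ couples to its neighbors $\Theta_0^{j\pm 1}$ (and to $\Theta_-$ when $j=1$) through pregluing error terms supported near the $\beta'_0$ transition regions, along with a $tV$ term from the metric perturbation.

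Next, one solves this system iteratively. Since $u_-$ is transversely cut out, $\Theta_- = 0$ is solved for $\psi_-$ as a function of $\psi_0^1$ via the contraction mapping argument of Proposition~\ref{prop: solving for theta minus t-gluing}. For each $j$, we split $\Theta_0^j = 0$ using the $L^2$-projection $\Pi^j$ onto $\coker D_{u_0^j}$ into an image equation and a cokernel equation. The collection of image equations, viewed as a system of equations for $(\psi_0^1, \ldots, \psi_0^m)$ in the product space $\cH_0^1 \times \cdots \times \cH_0^m$, can then be solved by a joint contraction mapping argument: the right inverses $(D_0^j)^{-1}$ are uniformly bounded, the coupling terms between $\psi_0^j$ and $\psi_0^{j\pm 1}$ carry factors of $R_{j}^{-1}$ or $R_{j+1}^{-1}$ (and exponential decay from the cutoffs), and the nonlinearity $Q_0^j$ is controlled as in Equation~\ref{eqn: form of Q}. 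This produces vector fields depending smoothly on the gluing parameters $(R_1, \ldots, R_m, t)$, satisfying Sobolev and derivative bounds analogous to those in Proposition~\ref{prop: solving for psi 0 t-gluing}.

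The obstruction bundle $\ob$ is now the trivial bundle over $[r,\infty)^m \times (-\rho,\rho)$ with fiber $\bigoplus_{j=1}^m \hom(\coker D_{u_0^j}, \R)$, and the obstruction section $\os$ has $j$-th component equal to $\langle \sigma_0^j, \Pi^j F_0^j(\psi_0^j)\rangle$. Its vanishing locus is in bijection with $\{u_t\}$-families degenerating to $(u_-, u_0^1, \ldots, u_0^m)$. As in Section~\ref{sec: linearized obstruction section}, we isolate the leading-order contribution $\os_0$: the $j$-th component is obtained by pairing $\sigma_0^j$ with the terms $\beta_-'u_-$ (if $j=1$), $\beta_0^{j-1 \prime}u_0^{j-1}$ (if $j > 1$), $\beta_0^{j+1 \prime}u_0^{j+1}$ (if $j<m$), and $t V$. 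Inserting the asymptotic expansions of $u_-, u_0^j, \sigma_0^j$ into these $L^2$-pairings, performing the exponential-decay integrals, and dropping the higher-order terms yields precisely the system of $m$ equations stated in the theorem, with the $R_j$'s parametrizing where the zero locus of $\os_0$ lies.

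The remaining work is the $C^1$-closeness of $\os$ to $\os_0$, which reduces to showing that the contributions of $\psi_-$ and the $\psi_0^j$'s to $F_0^j$ are dominated by (exponentially smaller than) the leading-order terms in $\os_0$. This follows from propagating the exponential decay estimates of Section~\ref{sec: C0 closeness of linearized section} across each neck: in each region where no $\beta'$ is active, the equations reduce to autonomous linear ODEs $D_0^j \psi_0^j = 0$, so Proposition~\ref{Prop:decay_of_kernel} applies, and the bounds iterate across levels with an overall factor of the product of the $R_j$'s. Once the $C^1$-closeness is established, the zero-set analysis matches that of Theorem~\ref{thm: t-gluing}: the system of equations decouples in a triangular fashion (the last equation determines $R_m$ from $t$, the $j$-th from the first $m-j+1$ variables), so its solvability can be checked by repeated application of the implicit function theorem. \textbf{The main obstacle} is the joint contraction mapping for the coupled middle-level equations and the uniform exponential-decay analysis needed to keep the error terms small compared to the $m$ independent leading components of $\os_0$; both complications arise because the $m$ non-transverse levels influence one another, in contrast to the single middle-level case of Theorem~\ref{thm: t-gluing}.
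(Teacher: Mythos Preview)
Your sketch is correct and follows precisely the approach the paper itself indicates. In fact, the paper does \emph{not} prove this theorem: it explicitly states ``We leave this as a Theorem without proof, but only remark that the proof would be analogous to that of Theorem~\ref{thm: t-gluing}.'' Your proposal fleshes out that analogy in considerably more detail than the paper provides, correctly identifying the multi-component obstruction bundle with fiber $\bigoplus_{j=1}^m \hom(\coker D_{u_0^j},\R)$, the joint contraction mapping across the coupled middle levels, and the triangular structure of the linearized system (the last equation fixes $R_m$ from $t$, then one back-substitutes). So there is nothing to compare against; your sketch \emph{is} the intended argument, written out more fully than the authors chose to.
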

We can obtain analogous statements for broken flowlines of the form $$(u_0^1, \dots, u_0^m, u_+)$$ where each $u_0^j$ has a $1$-dimensional cokernel and $u_+$ is transversely cut out with Fredholm index $1$. Once we have proved these theorems, we can define Morse differentials by counting broken flowlines. To define the differentials, first fix a perturbation of the metric $t$.
The differential would then be a count of total index $1$ broken flowlines that are $t$-gluable for $t>0$.

\bibliographystyle{alpha}
\bibliography{references.bib}

\end{document}